\documentclass[11pt,reqno]{amsart}
\usepackage{graphicx, fullpage,bbm, subfigure}
\usepackage{amssymb, amsmath, amsthm,tikz, bm,bbm}
\usepackage{epstopdf,enumitem,esint}
\usepackage{verbatim,mathabx}
\usepackage{xcolor,mathrsfs,mathtools,stmaryrd,microtype}
\usepackage[T1]{fontenc}
\usepackage[utf8]{inputenc}
\usepackage{hyperref}
\DeclareGraphicsRule{.tif}{png}{.png}{`convert #1 `dirname #1`/`basename #1 .tif`.png}

\newcommand{\be}{\begin{equation} }
\newcommand{\ee}{\end{equation}}
\newcommand{\bse}{\begin{subequations}}
\newcommand{\ese}{\end{subequations}}

\newcommand{\LB}{\left[}
\newcommand{\RB}{\right]}
\newcommand{\LC}{\left(}
\newcommand{\RC}{\right)}


\newcommand{\mr}{\mathbb{R}}

\newcommand{\p}{\partial}

\newcommand{\R}{{\mathbb R}}

\theoremstyle{plain}
\newtheorem{theorem}{Theorem}[section]

\newtheorem{lemma}{Lemma}[section]
\newtheorem{proposition}{Proposition}[section]
\newtheorem{definition}{Definition}[section]
\theoremstyle{remark}
\newtheorem{remark}{Remark}[section]


\theoremstyle{definition}

\numberwithin{equation}{section}

\title[Transverse spectral stability of $b$-KP waves]{Spectral analysis of periodic $b$-KP equation under transverse perturbation}


\author[R. M. Chen]{Robin Ming Chen}
\address[Robin Ming Chen]{Department of Mathematics, University of Pittsburgh, Pittsburgh, PA 15260}
\email{mingchen@pitt.edu}
\author[L. Fan]{Lili Fan}%
\address[Lili Fan]{College of Mathematics and Information Science,
Henan Normal University, Xinxiang 453007, China}
\email{fanlily89@126.com}
\author[X. Wang]{Xingchang Wang}
\address[Xingchang Wang]{College of Mathematical Sciences, Harbin Engineering University, Harbin 150001, China}
\email{wangxingchang@hrbeu.edu.cn}
\author[R. Xu]{Runzhang Xu}
\address[Runzhang Xu]{College of Mathematical Sciences, Harbin Engineering University, Harbin 150001, China}
\email{xurunzh@hrbeu.edu.cn}

\date{}


\begin{document}

\begin{abstract}

The $b$-family-Kadomtsev-Petviashvili equation ($b$-KP) is a two dimensional generalization of the $b$-family equation. In this paper, we study the spectral stability of the one-dimensional small-amplitude periodic traveling waves with respect to two-dimensional perturbations which are either co-periodic in the direction of propagation, or nonperiodic (localized or bounded). We perform a detailed spectral analysis of the linearized problem associated to the above mentioned perturbations, and derive various stability and instability criteria which depends in a delicate way on the parameter value of $b$, the transverse dispersion parameter $\sigma$, and the wave number $k$ of the longitudinal waves.

\end{abstract}

\thispagestyle{empty}
\maketitle

\setcounter{tocdepth}{1}
\tableofcontents

\noindent {\sl Keywords\/}: $b$-Kadomtsev-Petviashvili equation, Camassa-Holm-Kadomtsev-Petviashvili equation, periodic traveling waves, transverse spectral stability


\noindent {\sl AMS Subject Classification} (2010): 35B35, 35C07, 37K45.

\section{Introduction}

In the study of wave phenomena, particularly in dispersive and nonlinear media, the emergence of periodic wave trains stands out as a captivating outcome arising from the intricate interplay between dispersion and nonlinearity.  These wave patterns manifest across diverse physical domains, encompassing phenomena such as water waves, nonlinear optics, acoustics, and plasma. Given their pervasive presence in nature, the investigation of periodic wave trains continues to attract considerable interest from scientists and researchers.

One important question is the stability of the periodic waves. Stability properties govern the long-term behavior of the wave patterns and play a crucial role in understanding the robustness and predictability of the phenomena they represent. While many physical systems support unidirectional wave propagation, which can be modeled by equations in a single spatial dimension, it is crucial to acknowledge that in a multi-dimensional context, transverse effects become integral. Consequently, the stability analysis of one-dimensional traveling waves, particularly concerning perturbations propagating along the transverse direction of the primary axis -- referred to as transverse stability -- becomes a naturally compelling avenue of interest. This exploration extends beyond the traditional analysis of responses to perturbations along the main propagation axis, providing a more comprehensive understanding of the intricate dynamics governing wave stability in multi-dimensional settings.

Such a problem was first studied by Kadomtsev and Petviashvili \cite{KP1970}, who derived a two-dimensional generalization of the celebrated KdV equation, the so-called Kadomtsev-Petviashvili (KP) equation. They found that the KdV localized solitons in the KP flow are stable to transverse perturbations in the case of negative dispersion (KP-II), while unstable by long wavelength transverse perturbations for the positive dispersion model (KP-I). Later development of the theory for solitary waves includes the use of integrability \cite{Zak75}, explicit spectral analysis \cite{APS}, perturbation analysis \cite{KTN}, general Hamiltonian PDE techniques \cite{RT1,RT2}, Miura transformation \cite{MT2012}, the combination of algebraic properties, weighted function spaces, and refined PDE tools \cite{Miz2015,Miz2018}, among others.

When periodic waves are considered, to the authors' knowledge, most of the study of transverse stability pertains to spectral analysis; see for e.g., \cite{Spe1988,JZ10,Har11,HW,BKP} for the KP and generalized KP equations, \cite{HSS12,AC21} for the nonlinear Schr\"odinger (NLS) equation, and \cite{CW12,Joh10,Nat23} for the Zakharov-Kuznetsov (ZK) equation.

The goal of this paper is to extend the transverse stability analysis to periodic waves arising from models exhibiting strong non-local and nonlinear features. Specifically, we choose the one-dimensional $b$-family equation \cite{HS03}
\begin{equation}\label{eq b-family}
\left(1-\partial_x^2\right) u_t+(b+1) u u_x+ \kappa u_x-b u_x u_{x x}-u u_{x x x} = 0, \quad u = u(t,x), \ \ b \in \R, \kappa > 0,
\end{equation}
and consider its two-dimensional generalization
\begin{equation}\label{CHKP}
\left[\left(1-\partial_x^2\right) u_t+(b+1) u u_x+ \kappa u_x-b u_x u_{x x}-u u_{x x x}\right]_x+\sigma u_{y y}=0, \quad \sigma = \pm1,
\end{equation}
where the profile $u = u(t,x,y)$. We refer to \eqref{CHKP} as the $b$-KP equation due to the resemblance of transverse term $\sigma u_{yy}$ to that of the classical KP equation; and in a similar way, the $b$-KP equation with $\sigma = -1$ is called the $b$-KP-I equation, whereas the one with $\sigma=1$ is called the $b$-KP-II equation. The physical relevance of the $b$-KP equation \eqref{CHKP} has been recently discovered in the context of shallow water waves \cite{Jo02,GLL21} and nonlinear elasticity \cite{Ch06}, for the case $b = 2$. The corresponding equation is also referred to as the CH-KP equation as it generalizes the well-known Camassa--Holm equation \cite{CH}.

While the (longitudinal) stability of solitary and periodic waves of the $b$-family equation \eqref{eq b-family} has been studied quite extensively, the understanding of the local dynamics of these waves under the $b$-KP flow is much less developed. The only results that the authors are aware of regard the line solitary waves of the CH-KP equation: the nonlinear transverse instability of the solitary waves to the CH-KP-I equation is established in \cite{CJ21}, and linear stability of small-amplitude solitary waves is confirmed for CH-KP-II very recently \cite{GLP23}.

In this paper, we will investigate the transverse spectral stability/instability of small periodic traveling waves of the $b$-family equation \eqref{eq b-family} with respect to perturbations in the $b$-KP flow. Compared with the study of solitary waves, the stability of periodic waves is usually more delicate. The periodic waves in general exhibit a richer structural complexity, characterized by dependencies on three parameters -- namely, the period, wave speed, and integration constant. Such higher degree of freedom often introduces additional technical difficulties not encountered in the analysis of solitary waves. Moreover, a more broader class of perturbations can be considered for periodic waves, encompassing co-periodic, multiple-periodic, localized perturbations, among others. Our motivation for specifically studying the small-amplitude period waves is inspired by the work of Haragus \cite{Har11}, where perturbation arguments have been successfully employed to discern the spectra of the linear operator. In contrast to the stability analysis of large waves, where instability criteria can usually be derived (in the particular case of integrable systems, explicit computation can be performed, but \eqref{CHKP} is in general non-integrable), our choice to work with small-amplitude waves is motivated by the potential for obtaining more explicit information on the spectra, and for allowing for a broader range of perturbation types.

\subsection{Main results}

Although the basic idea of the approach stems from the work of Haragus \cite{Har11}, the quasilinear structure of the equation \eqref{CHKP} makes the spectral computation a lot more involved. For the case of $b$-KP-I flow $\sigma = -1$ with co-periodic perturbations in the direction of propagation (see Section \ref{sec formulation} for a precise definition of the class of perturbations and the corresponding notion of spectral stability), the linearized problem does not assume a natural Hamiltonian structure, and hence the standard index counting method cannot be applied directly. On the other hand, the linearized operator at the trivial solution does admit a decomposition into a composition of a skew-adjoint operator with a self-adjoint operator, and thus the counting result can be used to provide direct insights for the spectrum of this operator. Such information can then be transferred to the linearized operators at small-amplitude waves. Through a careful perturbation argument and explicit computation on the expansion of the spectra, we confirm the emergence of long-wave transverse instability for a range of $b$, which includes the examples of CH-KP ($b = 2$) and Degasperis--Procesi(DP)-KP ($b = 3$). More interestingly, depending on the wave number $k$ of the line periodic waves, there also exists a large region of values of $b$ where the line periodic waves are transversally spectrally stable.

In contrast to the $b$-KP-I equation, the spectral analysis for the $b$-KP-II equation ($\sigma = 1$) presents a more intricate challenge, reminiscent of the complexities found in the classical KP-II equation. The computation of the spectrum becomes substantially more complicated due to the nature of the dispersion relation, which is more likely to host unstable modes. Notably, in the limit of zero transverse wavelength, the dispersion relation may harbor an infinite number of potentially unstable eigenvalues. Tracking the locations of these eigenvalues further adds to the complexity, requiring the computation of the Taylor expansion of the corresponding eigen-matrix to an arbitrarily high order. The difficulties involved in these computations make it exceptionally challenging to achieve a comprehensive spectral analysis.
What we are able to conclude in this case is a characterization of the spectra under long wavelength transverse perturbations. While a complete spectral analysis remains elusive, our findings align well with the spectral stability observed in the context of CH-KP-II solitary waves \cite{GLP23}.
\begin{theorem}[Informal statement of transverse stability for periodic perturbation]
Let $b \ne -1$. Consider a $2\pi/k$-periodic traveling wave solution of \eqref{eq b-family} constructed in Lemma \ref{le1}.
\begin{enumerate}[label=\textup{(\alph*)}]
\item For $\sigma = -1$ ($b$-KP-I) and the amplitude of the wave is sufficiently small, such a wave is transversely spectrally unstable with respect to co-periodic perturbations in the $x$-direction and periodic in the $y$-direction, when the parameters $(b,k^2)$ lie outside the shaded region showed in Figure \ref{stab I figure}. This wave is transversely spectrally stable otherwise, provided that the wave is spectrally stable with respect to longitudinal
perturbations.
\item For $\sigma = 1$ ($b$-KP-II) and the amplitude of the wave is sufficiently small, such a wave is transversely spectrally unstable with respect to co-periodic perturbations in the $x$-direction and periodic in the $y$-direction, when the parameters $(b,k^2)$ lie inside the shaded region showed in Figure \ref{stab I figure}. Otherwise this wave is transversely spectrally stable under long-wave transverse perturbation, provided that the wave is spectrally stable with respect to longitudinal perturbations.
\end{enumerate}
\begin{figure}
  \centering
  \includegraphics[page=1,scale=0.8]{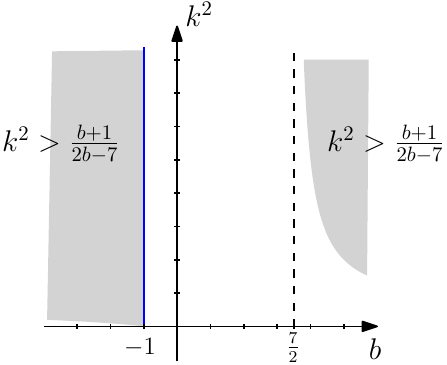}
  \vspace{-1.5ex}
  \caption{A schematic plot of the stability region for the periodic perturbation of the $b$-KP flow. The two shaded regions correspond to $\{(b, k^2) : b < -1, k^2 > \frac{b+1}{2b-7}\}$ and $\{(b, k^2) : b > \frac72, k^2 > \frac{b+1}{2b-7}\}$.
  \label{stab I figure} }
\end{figure}
\end{theorem}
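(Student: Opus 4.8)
The plan is to follow the perturbative strategy of Haragus~\cite{Har11}: first pin down the spectrum of the linearization at the trivial (zero-amplitude) state, where the operator has constant coefficients and is fully explicit, and then track how the relevant finite part of that spectrum moves once the wave amplitude $\e$ and the transverse wavenumber are switched on. I would begin by recording the small-amplitude expansion of the profile $\phi(\cdot\,;\e)$ furnished by Lemma~\ref{le1}, linearize \eqref{CHKP} about the travelling wave $u=\phi(x-ct)$, and seek perturbations of the form $e^{\lambda t}e^{i\eta y}w(x)$, with $\eta$ the transverse Fourier parameter (small in the long-wave regime) and $w$ sharing the period $2\pi/k$ for co-periodic perturbations. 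Since $\partial_t\mapsto\lambda$ and $\partial_{yy}\mapsto-\eta^2$, this turns the problem into the generalized eigenvalue problem $\lambda\,\partial_x(1-\partial_x^2)\,w=\mathcal{A}_\e(\eta)w$, where $\partial_x(1-\partial_x^2)$ is skew-adjoint; at $\e=0$ the right-hand operator is self-adjoint, which is precisely the skew-adjoint $\circ$ self-adjoint factorization the analysis exploits. Spectral stability amounts to $\realpart\lambda\le0$ throughout the spectrum.

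At $\e=0$ the coefficients are constant, so a Fourier series in $x$ diagonalizes the operator and yields an explicit dispersion relation $\lambda=\lambda_n(\eta)$ indexed by $n\in\Z$. The crucial observation is that at $\e=\eta=0$ several of these eigenvalues collide on the imaginary axis — in particular at the origin — and it is exactly the fate of these finitely many colliding ``dangerous'' modes under perturbation that decides transverse stability (the hypothesis of longitudinal stability guarantees that, along the $\eta=0$ axis, the relevant branches remain on the imaginary axis, so that the transverse term $\sigma\eta^2$ is what can push them off). I would isolate the corresponding low-dimensional spectral subspace and carry out a Lyapunov--Schmidt / matrix-perturbation reduction, replacing the eigenvalue problem near each collision by a small matrix $M(\e,\eta)$ whose entries are computed from the profile expansion and the factored operator.

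The decisive step is the Taylor expansion of $M(\e,\eta)$ and the evaluation of the leading coefficient governing the splitting of the colliding eigenvalues. Balancing the $\e^2$ and $\eta^2$ contributions, one finds that whether the branches leave the imaginary axis (instability) or remain on it (stability) is controlled by the sign of a single expression depending on $b$, $k$, and $\sigma$; setting it to zero produces the critical curve $k^2=\frac{b+1}{2b-7}$ together with the thresholds $b=-1$ and $b=\tfrac72$, which are exactly the boundaries of the shaded region of Figure~\ref{stab I figure}. For $\sigma=-1$ the sign places instability \emph{outside} the shaded region, and for $\sigma=1$ the opposite sign places it \emph{inside}, giving parts (a) and (b) respectively. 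The main obstacle will be this coefficient computation: because $1-\partial_x^2$ must be inverted and the structure is only skew-adjoint $\circ$ self-adjoint rather than genuinely Hamiltonian, the usual index-counting shortcuts are unavailable; moreover, for $\sigma=1$ the eigen-matrix expansion must in principle be pushed to higher order to control the infinitely many modes that accumulate in the zero-transverse-wavelength limit, which is why only the long-wave conclusion is claimed in part (b).
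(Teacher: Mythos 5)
Your proposal matches the paper's proof in all essentials: a perturbation from the constant-coefficient linearization at zero amplitude exploiting the skew-adjoint composed with self-adjoint factorization $\mathcal{A}_0(\ell)=\mathcal{J}\mathcal{K}_0(\ell)$, reduction of the eigenvalue collision at the origin to a $2\times 2$ matrix whose characteristic roots satisfy $\lambda^2=-\frac{\ell^2}{(1+k^2)^2}\bigl[\ell^2-\ell_a^2+O\bigl(a^2(\ell^2+a^2)\bigr)\bigr]$ (Theorem \ref{the4.1}, with the $\sigma=1$ case of Lemma \ref{lem4.5} obtained by replacing $\ell^2$ with $-\ell^2$), and the stability boundary governed by the sign of $\ell_a^2\propto(b+1)\bigl[(b+1)+(7-2b)k^2\bigr]$, which yields exactly the curve $k^2=\frac{b+1}{2b-7}$ and the thresholds $b=-1,\tfrac72$ of Figure \ref{stab I figure}, with only the long-wave conclusion available for $\sigma=1$ for precisely the reason you state. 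The one ingredient you leave implicit --- needed so that the stable half of part (a) holds for all $\ell\neq 0$ rather than only for long waves --- is the Krein-signature computation \eqref{3.16}, which shows that for $\sigma=-1$ all signatures are positive, so eigenvalue collisions away from the origin cannot eject eigenvalues from the imaginary axis (Lemma \ref{lem4.3}).
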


For non-periodic perturbations in the direction of propagation, the linearized operator has bands of continuous spectra. Since the coefficients of the operator are periodic, we will use the classical Floquet-Bloch theory to replace the study of the invertibility of the original linearized operator by the invertibility of a family of Bloch operators in parameterized by the Floquet exponent (see Lemma \ref{lem5.1}). Through a detailed calculation of the spectrum of the linearized operator at the trivial solution (zero-amplitude solution), a perturbation argument is performed, which allows one to derive instability criterion for the $b$-KP-I case. We would like to point out that, a complete understanding of the Floquet analysis for the linearized operator is exceedingly difficult due to the appearance of the terms corresponding to the smoothing operator in the dispersion relation. Instead, when focused on the regime where the transverse perturbations are of finite wavelength, we manage to track the location where there is exactly one collision between a pair of eigenvalues of the linearized operator at the trivial solution from the imaginary axis, which results in the bifurcation of the unstable eigenvalues of the full linearized problem. The exact statement of the results is given in Theorem \ref{the5.1}. For long-wavelength transverse perturbations, an additional condition on the longitudinal wavelength is needed in order to eliminate the eigenvalue collisions. The detailed discussion is provided in Section \ref{subsec long}.

The remainder of this paper is organized as follows.  In Section \ref{sec existence}, we use the Lyapunov-Schmidt reduction to construct the family of the one-dimensional small-amplitude periodic traveling waves of the $b$-KP equation and provide a parameterization of these waves. In Section \ref{sec formulation}, we formulate the spectral problem for the $b$-KP equation and introduce the definition of spectral stability in various function space settings. In Sections \ref{sec per pert} and \ref{sec nonper}, we discuss the spectra of the resulting linear operators, and investigate the transverse spectral stability/instability of the small periodic waves of the $b$-KP-I and $b$-KP-II equation for periodic and non-periodic perturbations.
\subsection{Notations}
Throughout this paper, we will use the following notations. The space $L^2(\mathbb{R})$ denotes the set of real or complex-valued, Lebesgue measurable functions over $\mathbb{R}$ such that
\[
\|f\|_{L^2(\mathbb{R})}=\left(\int_{\mathbb{R}}|f(x)|^2 \mathrm{~d} x\right)^{1 / 2}<+\infty,
\]
and $L^2(\mathbb{T})$ denotes the space of $2 \pi$-periodic, measurable, real or complex-valued functions over $\mathbb{R}$ such that
\[
\| f \|_{L^2(\mathbb{T})}=\left(\frac{1}{2 \pi} \int_0^{2 \pi}|f(x)|^2 \mathrm{~d} x\right)^{1 / 2}<+\infty .
\]
The space $C_{\textup{bdd}}(\mathbb{R})$ contains all bounded continuous functions on $\mathbb{R}$, normed with
$$
\| f \|=\sup _{x \in \mathbb{R}}|f(x)| .
$$
For $s \in \mathbb{R}$, let $H^s(\mathbb{R})$ consist of tempered distributions such that
$$
\|f\|_{H^s(\mathbb{R})}=\left(\int_{\mathbb{R}}\left(1+|t|^2\right)^s|\widehat{f}(t)|^2 \mathrm{~d} t\right)^{1 / 2}<+\infty,
$$
where $\widehat f$ is the Fourier transform of $f$, and
$$
H^s(\mathbb{T})=\left\{f \in H^s(\mathbb{R}): f \text { is } 2 \pi \text {-periodic}\right\}.
$$
We define the $L^2(\mathbb{T})$-inner product as
\begin{equation}\label{1.2}
\langle f, g\rangle=\frac{1}{2 \pi} \int_0^{2 \pi} f(z) \bar{g}(z) \mathrm{d} z=\sum_{n \in \mathbb{Z}} \widehat{f}_n \overline{\widehat{g}}_n,
\end{equation}
where $\widehat{f}_n$ are Fourier coefficients of the function $f$ defined by
$$
\widehat{f}_n=\frac{1}{2 \pi} \int_0^{2 \pi} f(z) \mathrm{e}^{i n z} d z.
$$
We denote $\Re(\lambda)$ the real part of $\lambda \in \mathbb{C}$.

\section{Existence of small periodic traveling waves}\label{sec existence}
One-dimensional traveling waves of the $b$-KP equation \eqref{CHKP} are solutions of the form
\[
u(x, y, t) = u(x - ct),
\]
where $c>0$ is the speed of propagation, and $u$ satisfies the ODE
\[
\left[-c\left(u^{\prime}-u^{\prime \prime \prime}\right)+(b+1)u u^{\prime}+ \kappa u^{\prime}-b u^{\prime} u^{\prime \prime}-u u^{\prime \prime \prime}\right]^{\prime}=0.
\]
Integrating this equation twice, and writing $x$ instead of $x - ct$, we obtain the second order ODE
\[
(\kappa-c) u+c u^{\prime \prime}+\frac{b+1}{2} u^2-uu^{\prime\prime}-\frac{b-1}{2}\left(u^{\prime}\right)^2= A x-m(c- \kappa)^2,
\]
in which $A$ and $m$ are arbitrary integration constants. Considering periodic solutions, we set $A = 0$ and the equation reduces to
\begin{equation}\label{2.1}
(\kappa-c) u+c u^{\prime \prime}+\frac{b+1}{2} u^2-uu^{\prime\prime}-\frac{b-1}{2}\left(u^{\prime}\right)^2=-m(c- \kappa)^2.
\end{equation}
Since this equation does not possess scaling and Galilean invariance, we may not simply assume that $c=1, m=0$.

Let $u$ be a $2 \pi / k$-periodic function of its argument, for some $k>0$. Then, $w(z):=u(x)$ with $z=k x$, is a $2 \pi$-periodic function in $z$, satisfying
\begin{equation}\label{2.6}
(\kappa - c) w+ck^2 w_{z z}+\frac{b+1}{2} w^2-k^2 w w_{z z}-\frac{b-1}{2} k^2 w_z^2=-m(c - \kappa)^2.
\end{equation}
Let $F: H^2_{2\pi}(\mathbb{T}) \times\mr_{+}\times\mr\times\mr\rightarrow L^2(\mathbb{T})$ be defined as
\begin{equation}\label{2.7}
F(w ; k, c, m)=(\kappa - c) w+c k^2 w_{z z}+\frac{b+1}{2} w^2-k^2 w w_{z z}-\frac{b-1}{2} k^2 w_z^2+m(c - \kappa)^2.
\end{equation}
We seek a solution $w \in H^2(\mathbb{T})$ of
\begin{equation}\label{2.8}
F(w ; k, c, m) = 0.
\end{equation}
Noting that \eqref{2.7} remains invariant under $z\mapsto z+z_0$, $z\mapsto-z$ for any $z_0\in\mr$, we may assume that $w$ is even. Clearly $F$ is analytic on its arguments.

It is easy to see that a constant solution $w_0$ of equation \eqref{2.8} satisfies
\begin{equation}\label{2.11}
\frac{b+1}{2} w_0^2 + (\kappa - c) w_0 + m(c - \kappa)^2 = 0.
\end{equation}
For $c \in \mr$ and $\kappa > 0$, when $b = -1$ we have
\begin{equation*}
w_0 = w_0(m) = \left\{\begin{aligned}
  & m (c - \kappa), \ & \text{ when } c \ne \kappa, \\
  & \text{any constant,} \ & \text{ when } c = \kappa.
\end{aligned}\right.
\end{equation*}
When $b \ne -1$, and $2m(b+1) < 1$, \eqref{2.11} is a genuine quadratic equation, and we find
\begin{equation*}
w_0(m) = \frac{c - \kappa \pm |c - \kappa| \sqrt{1 - 2m(b+1)}}{b+1}.
\end{equation*}
It follows from the implicit function theorem that if non-constant solutions of \eqref{2.8} (and hence \eqref{2.6}) bifurcate from $w = w_0$ for some $c = c_0$ then necessarily,
\begin{equation*}
L_0 := \p_w F\left(w_0 ; c_0, k, m\right): H^2(\mathbb{T}) \rightarrow L^2(\mathbb{T})
\end{equation*}
is not an isomorphism, where
\begin{equation*}
 L_0=k^2\left(c_0-(b-1)w_0\right) \partial_z^2+\left( \kappa -c_0\right)+(b+1) w_0.
\end{equation*}
Further calculation reveals that $L_0 e^{inz}=0, n\in\mathbb{Z}$, if and only if
\begin{align}\label{2.14}
c_0 =\frac{\kappa}{1+k^2 n^2}+w_0 \frac{(b+1)+ k^2 n^2(b-1)}{1+k^2 n^2},
\end{align}
which, when plugging in the form of $w_0$, would lead to a solution
\[
c_0 = c_0(k,m),
\]
at least for $m$ sufficiently small.

Without loss of generality, we restrict our attention to $|n| = 1$, to further simplify the analysis, we take the constant $m = 0$, and consider the constant solution $w_0 = 0$. This way \eqref{2.14} leads to
\[
c_0 = c_0(k) = \frac{\kappa}{1+k^2}.
\]
In this case it is straightforward to verify that the kernel of $L_0$ : $H^2(\mathbb{T}) \rightarrow L^2(\mathbb{T})$ is two-dimensional and spanned by $e^{\pm iz}$. Moreover, the co-kernel of $L_0$ is two-dimensional. Therefore, $L_0$ is a Fredholm operator of index zero. One may then follow an idea similar to that of \cite{HP,HP1} to employ a Lyapunov-Schmidt reduction and construct a one parameter family of non-constant, even and smooth solutions of \eqref{2.1} near $w = w_0 = 0$ and $c=c_0(k)$. The small-amplitude expansion of these solutions is given as follows and the details are provided in Appendix \ref{app A}.

\begin{lemma}\label{le1}
For each $\kappa, k > 0$, and $m = 0$, there exists a family of small amplitude $2\pi/k$-periodic
traveling waves of \eqref{CHKP} 
\begin{equation}\label{2.2}
w(k, a) := u\left(k\left(x-c(k, a) t\right)\right)
\end{equation}
for $|a|$ sufficiently small; $w$ and $c$ depend analytically on $k$ and $a$,
$w$ is smooth, even and $2\pi$-periodic in $z$, and $c$ is even in $a$. Furthermore, as $a \to 0$,
\begin{align}
w(k, a)= &\  a \cos z+a^2 \left(A_0+A_2 \cos 2z\right)+a^3A_3\cos3z +O\left(a^4 \right), \label{2.3} \\
c(k, a) = &\ c_0 + a^2 c_2+O\left( a^4 \right), \label{2.4}
\end{align}
with
\begin{equation}\label{2.5}
\begin{array}{ll}
\displaystyle A_0=\frac{\left(1+k^2\right)}{4\kappa k^2}\left( {(b-3)}k^2-{(b+1)}\right),\quad A_2=\frac{{(b+1)}\left(1+k^2\right)^2}{12 \kappa k^2},\\\\
\displaystyle A_3=\frac{{(b+1)}\left(k^2+1\right)^3}{192\kappa^2 k^4}\left({{(2b+3)} k^2+(b+1)}\right),\\\\
\displaystyle c_0 = \frac{\kappa}{1+k^2}, \quad c_2=\frac{1}{\kappa}\left({\frac {-2b^2+11b-11}{24} k^2 + \frac {5b^2-11b-16}{24} -\frac{5(b+1)^2}{24k^2}}\right).
\end{array}
\end{equation}
\end{lemma}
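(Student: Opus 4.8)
The plan is to carry out a Lyapunov--Schmidt reduction on the even subspaces and then pin down the coefficients by matching powers of the amplitude. Since $F$ in \eqref{2.7} is autonomous and invariant under $z \mapsto -z$, I would restrict it to the even subspaces $H^2_{\mathrm{even}}(\mathbb{T}) \subset H^2(\mathbb{T})$ and $L^2_{\mathrm{even}}(\mathbb{T}) \subset L^2(\mathbb{T})$, on which the kernel of $L_0 = k^2 c_0 \partial_z^2 + (\kappa - c_0)$ collapses to the one-dimensional span of $\xi_0 := \cos z$. Because $L_0$ is self-adjoint, the orthogonal projection $Q$ onto $\mathrm{span}\{\xi_0\}$ serves simultaneously as the projection onto the kernel and as the one whose complement is the range; I would therefore write $w = a\,\xi_0 + \phi$ with $Q\phi = 0$ and split \eqref{2.8} into the range equation $(I - Q)F = 0$ and the scalar bifurcation equation $QF = 0$.

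For the range equation, set $G(\phi, a, c; k) := (I - Q)F(a\xi_0 + \phi; k, c, 0)$, so that $G(0, 0, c_0; k) = 0$ and $\partial_\phi G(0, 0, c_0; k) = (I - Q)L_0(I - Q)$ is an isomorphism of $(I - Q)H^2_{\mathrm{even}}(\mathbb{T})$ onto $(I - Q)L^2_{\mathrm{even}}(\mathbb{T})$. The analytic implicit function theorem then supplies a unique analytic $\phi = \Phi(a, c; k) = O(a^2)$, and since $F$ is polynomial in $k^2$ this dependence is jointly analytic in $(a, k)$. Substituting into the bifurcation equation produces a scalar reduced equation $g(a, c; k) := \langle F(a\xi_0 + \Phi; k, c, 0), \xi_0\rangle = 0$. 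As $w \equiv 0$ solves \eqref{2.8} for every $c$, one has $g(0, c; k) \equiv 0$, whence $g = a\, h(a, c; k)$ with $h$ analytic and the nontrivial branch given by $h = 0$. Because $\cos z$ is an eigenfunction of $L_c := k^2 c \partial_z^2 + (\kappa - c)$ for every $c$, one finds $\partial_a \Phi(0, c; k) = 0$ and hence $h(0, c; k) = \tfrac12\bigl(\kappa - c(1 + k^2)\bigr)$, which vanishes exactly at $c = c_0$ with $\partial_c h(0, c_0; k) = -\tfrac12(1 + k^2) \ne 0$. A final application of the implicit function theorem yields the analytic speed $c = c(k, a)$ with $c(k, 0) = c_0$ and the profile $w(k, a) = a\,\xi_0 + \Phi(a, c(k, a); k)$.

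Evenness of $c$ in $a$ would follow from translation invariance: if $(w(\cdot), c)$ solves \eqref{2.8}, so does $(w(\cdot + \pi), c)$, and since $z \mapsto z + \pi$ flips the sign of the $\cos z$-coefficient $a$ while preserving $c$, uniqueness in the reduction forces $w(k, -a)(z) = w(k, a)(z + \pi)$ and $c(k, -a) = c(k, a)$.

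The remaining and most delicate task is to read off the coefficients in \eqref{2.3}--\eqref{2.5}. I would insert the ansatz $w = a\cos z + a^2(A_0 + A_2\cos 2z) + a^3 A_3\cos 3z + O(a^4)$ and $c = c_0 + a^2 c_2 + O(a^4)$ into $F = 0$ and equate harmonics order by order. At $O(a^2)$ the quadratic terms $\tfrac{b+1}{2}w^2 - k^2 w w_{zz} - \tfrac{b-1}{2}k^2 w_z^2$ contribute only constant and $\cos 2z$ parts, which balanced against $L_0(A_0 + A_2\cos 2z)$ determine $A_0$ and $A_2$. At $O(a^3)$ the cross-terms between the $O(a)$ and $O(a^2)$ parts of $w$ generate $\cos z$ and $\cos 3z$ harmonics; matching the $\cos 3z$ harmonic with $L_0(A_3\cos 3z)$ gives $A_3$, while the Fredholm solvability condition --- vanishing of the $\cos z$ component, into which the speed correction enters as $-(1 + k^2)c_2\cos z$ --- fixes $c_2$. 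The main obstacle is the careful bookkeeping of these quadratic cross-terms through cubic order, particularly the quasilinear contributions $-k^2 w w_{zz}$ and $-\tfrac{b-1}{2}k^2 w_z^2$. Finally, smoothness of the profile follows by elliptic bootstrapping, the leading coefficient $k^2\bigl(c - (b-1)w\bigr) \approx k^2 c_0 > 0$ remaining nondegenerate for $|a|$ small.
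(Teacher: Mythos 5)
Your proposal is correct and follows essentially the same route as the paper: a Lyapunov--Schmidt reduction about the one-dimensional even kernel spanned by $\cos z$ (which the paper delegates to its cited references) followed by the order-by-order harmonic balance of the paper's Appendix A, where $A_0, A_2$ come from inverting $L_0$ on the constant and $\cos 2z$ modes at order $a^2$, $A_3$ from the $\cos 3z$ mode at order $a^3$, and $c_2$ from the solvability condition on the $\cos z$ component into which the speed correction enters as $-(1+k^2)c_2\cos z$, exactly as you describe. The only part you leave unexecuted is the routine arithmetic producing the explicit values in \eqref{2.5} (and note the quasilinear coefficient of $w_{zz}$ in \eqref{2.1} is $k^2(c-w)$ rather than $k^2\left(c-(b-1)w\right)$, though both reduce to $k^2 c_0>0$ at the bifurcation point, so your bootstrap is unaffected).
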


\section{Formulation of the spectral problem}\label{sec formulation}
Linearizing the $b$-KP equation \eqref{CHKP} about its one-dimensional periodic traveling wave solution $w$ given in \eqref{2.3}, and considering the perturbations to $w$ of the form $w+\varepsilon v(t,z,y)$, we arrive that the equation
\begin{equation*}
\begin{aligned}
& k\left[\left(1-k^2\p_z^2\right)\left(v_t-k c v_z\right)+ \kappa kv_z+(b+1) k\left(w v\right)_z - k^3\left(w v_{z z}+(b-1)w_z v_z+w_{zz}v\right)_z\right]_z+\sigma v_{y y}=0.
\end{aligned}
\end{equation*}
Using change of variables and abusing notation $t\rightarrow kt$, $y\rightarrow ky$, we obtain
\begin{equation*}
\begin{aligned}
&\left[\left(1-k^2\p_z^2\right)\left(v_t- c v_z\right)+ \kappa v_z+(b+1)\left(w v\right)_z -
k^2\left(w v_{z z}+(b-1)w_z v_z+w_{zz}v\right)_z\right]_z+\sigma v_{y y}=0.
\end{aligned}
\end{equation*}
For $v(z, y, t)=\mathrm{e}^{\lambda t+i \ell y} V(z)$, we have
\begin{equation*}
\begin{aligned}
&
\left[\left(1-k^2\p_z^2\right)\left(\lambda V-cV_z\right)+ \kappa V_z+(b+1)(w V)_z - k^2\left(w V_{z z}+(b-1)w_z V_z+w_{z z}V\right)_z\right]_z-\sigma \ell^2 V=0.
\end{aligned}
\end{equation*}
The left-hand side of this equation defines the differential operator
\begin{align}\label{3.4}
\mathcal{T}_a(\lambda, \ell) V := & \left(1-k^2 \p_z^2\right) \p_z\left[\lambda V-c V_z+\left(1-k^2 \p_z^2\right)^{-1} \p_z\right.\nonumber\\
&\left.\left( \kappa +(b+1) w-k^2w_{z z}
-k^2(b-1)w_z\p_z -k^2w\p_z^2\right) V\right]-\sigma\ell^2V.
\end{align}
Clearly, the spectral stability problem concerns the invertibility of $\mathcal{T}_a(\lambda, \ell)$.

The longitudinal problem corresponds to perturbations with $\ell = 0$. In the particular cases of CH ($b=2$) equation and Degasperis--Procesi (DP) equation ($b=3$), the spectral and orbital stability for smooth periodic waves were obtained via inverse scattering \cite{lenells05} or by exploiting the variational characterization of the waves \cite{geyer22,geyer22arxiv}. This variational argument was further extended to treat the nonlinear orbital stability of periodic waves to the general $b$-CH family \cite{ehrman23arxiv} for all $b \ne 1$. The first approach relies substantially on the structure implication from the special values of $b$: equation \eqref{eq b-family} is completely integrable only for $b = 2, 3$. On the other hand, the variational approach utilizes the Hamiltonian structures. It turns out that the standard Hamiltonian formulation of the DP equation is amenable to the usual spectral stability theory \cite{geyer22arxiv}, whereas one needs to resort to the non-standard Hamiltonian formulation involving momentum density for the CH \cite{geyer22} and for the general $b$-family \cite{ehrman23arxiv} to deduce the stability criterion for periodic waves.

We consider in this paper two dimensional transverse perturbations which require $\ell \ne 0$. Specifically, three types of perturbations will be addressed:
\begin{itemize}
\item periodic (in $z$) perturbations, where $\mathcal{T}_a(\lambda, \ell)$ is considered to be $H^4(\mathbb{T}) \to L^2(\mathbb{T})$,
\item localized perturbations, where $\mathcal{T}_a(\lambda, \ell)$ is considered to be $H^4(\mathbb{R}) \to L^2(\mathbb{R})$, and
\item bounded perturbations, where $\mathcal{T}_a(\lambda, \ell)$ is considered to be $C^4_{\textup{bdd}}(\mathbb{R}) \to C_{\textup{bdd}}(\mathbb{R})$.
\end{itemize}

The precise definition of the transverse spectral stability is given as follows.
\begin{definition}[Transverse spectral stability]
For a $2 \pi / k$-periodic traveling wave solution $u(x, y, t)=w(k(x-c t))$ of \eqref{CHKP}
where $w$ and $c$ are as in \eqref{2.3} and \eqref{2.4}, we say that the periodic wave $w$ is transversely spectrally stable with respect to two-dimensional periodic perturbations (resp. non-periodic (localized or bounded perturbations)) if the $b$-KP operator $\mathcal{T}_a(\lambda, \ell)$ acting in $L^2(\mathbb{T})$ (resp. $L^2(\mathbb{R})$ or $C_{\textup{bdd}}(\mathbb{R})$ ) with domain $H^{4}(\mathbb{T})$ (resp. $H^{4}(\mathbb{R})$ or $C^4_{\textup{bdd}}(\mathbb{R})$) is invertible, for any $\lambda \in \mathbb{C}, \Re(\lambda)>0$ and any $\ell \neq 0$.
\end{definition}

%

\section{Periodic perturbations}\label{sec per pert}
In this section we study the transverse spectral stability of the periodic waves $w$ with respect to periodic perturbations for the $b$-KP equation. More precisely, we study the invertibility of the operator $\mathcal{T}_{a}(\lambda,\ell)$ acting in $L^2(\mathbb{T})$ with domain $H^4(\mathbb{T})$ for $\lambda\in \mathbb{C}, \Re(\lambda) >0$ and $\ell\in\mr \backslash \{ 0 \}$. Following the general strategy of \cite{Har11}, let's first reformulate the spectral problem for this particular case, as in the proposition below.
\begin{proposition}\label{pro4.1}
The following statements are equivalent:
\begin{enumerate}
\item  $\mathcal{T}_{a}(\lambda,\ell)$ acting in $L^2(\mathbb{T})$ with domain $H^4(\mathbb{T})$ is not invertible.
\item  The restriction of $\mathcal{T}_{a}(\lambda,\ell)$ to the subspace $L_0^2(\mathbb{T})$ of $L^2(\mathbb{T})$ is not invertible, where
    \[
    L_0^2(\mathbb{T})=\left\{f \in L^2(\mathbb{T}): \int_0^{2 \pi} f(z) d z=0\right\}.
    \]
\item  $\lambda$ belongs to the spectrum of the operator $\mathcal{A}_{a}(\ell)$ acting in $L_0^2(\mathbb{T})$ with domain $H^1(\mathbb{T}) \cap L_0^2(\mathbb{T})$ where $\mathcal{A}_{a}(\ell)$ is defined as follows:
\begin{equation}\label{3.11}
\begin{aligned}
\mathcal{A}_{a}(\ell) := & \ \p_z \left[c-\left(1-k^2 \p_z^2\right)^{-1}\right.\nonumber\\
& \quad \ \left.\left(\kappa +(b+1) w-k^2w_{z z}
-k^2(b-1)w_z\p_z -k^2w\p_z^2-\sigma\ell^2 \p_z^{-2}\right)\right]\nonumber\\
= & \ \p_z\left(1-k^2 \p_z^2\right)^{-1} \left[c\left(1-k^2 \p_z^2\right)\right.\nonumber\\
& \quad \ \left.-\left(\kappa +{ {(b+1)}} w-k^2w_{z z}
-k^2{{(b-1)}}w_z\p_z -k^2w\p_z^2-\sigma\ell^2 \p_z^{-2}\right)\right].
\end{aligned}
\end{equation}
\end{enumerate}
\end{proposition}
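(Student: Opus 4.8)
The plan is to establish the chain (1) $\Leftrightarrow$ (2) $\Leftrightarrow$ (3) by exhibiting an algebraic factorization of $\mathcal{T}_a(\lambda,\ell)$ on the zero-mean subspace and by isolating the action on constants. The organizing identity is that on $L_0^2(\mathbb{T})$, where $\p_z^{-1}$ and $\p_z^{-2}$ are well defined, one can pull the whole operator $(1-k^2\p_z^2)\p_z$ out to the left. Indeed, since $(1-k^2\p_z^2)\p_z\big[-\sigma\ell^2(1-k^2\p_z^2)^{-1}\p_z^{-1}\big]V=-\sigma\ell^2V$ for every $V\in L_0^2(\mathbb{T})$, the transverse term $-\sigma\ell^2V$ can be absorbed into the bracket, and comparison with \eqref{3.11} yields
\[
\mathcal{T}_a(\lambda,\ell)\big|_{L_0^2(\mathbb{T})} = (1-k^2\p_z^2)\,\p_z\,\big(\lambda-\mathcal{A}_a(\ell)\big).
\]
First I would verify this identity term by term, taking care that $\p_z^{-1}$ and $\p_z^{-2}$ only make sense after the reduction to $L_0^2(\mathbb{T})$; this is precisely why the statement must pass through item (2).

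For (2) $\Leftrightarrow$ (3), note that $(1-k^2\p_z^2)$ is an isomorphism on every Sobolev scale (its symbol $1+k^2n^2$ is bounded below by $1$) and $\p_z$ is an isomorphism of $H^{s+1}(\mathbb{T})\cap L_0^2(\mathbb{T})$ onto $H^s(\mathbb{T})\cap L_0^2(\mathbb{T})$, so $(1-k^2\p_z^2)\p_z$ maps $H^3(\mathbb{T})\cap L_0^2(\mathbb{T})$ isomorphically onto $L_0^2(\mathbb{T})$. By the factorization, $\mathcal{T}_a(\lambda,\ell)|_{L_0^2(\mathbb{T})}$ is invertible if and only if $\lambda-\mathcal{A}_a(\ell)$ is, i.e. iff $\lambda\notin\operatorname{spec}(\mathcal{A}_a(\ell))$. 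A technical point to dispatch is that the factorization presents $\lambda-\mathcal{A}_a(\ell)$ as a map $H^4\cap L_0^2\to H^3\cap L_0^2$, whereas the spectrum in (3) uses the domain $H^1\cap L_0^2$. These coincide by an elliptic bootstrap: reducing \eqref{3.11} by $(1-k^2\p_z^2)\p_z^{-1}$ exposes the genuine second-order operator $k^2(c-w)\p_z^2+(\text{lower order})$, whose leading coefficient $k^2(c-w)$ stays bounded away from $0$ in the small-amplitude regime (since $w=O(a)$ while $c\approx c_0=\kappa/(1+k^2)>0$), so solutions inherit the regularity of the data and the spectrum is scale-independent.

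The equivalence (1) $\Leftrightarrow$ (2) is where the hypothesis $\ell\neq0$ enters decisively. Writing $L^2(\mathbb{T})=\mathbb{C}\mathbf{1}\oplus L_0^2(\mathbb{T})$, I would first check that $\mathcal{T}_a(\lambda,\ell)$ preserves $L_0^2(\mathbb{T})$: its leading part is $\p_z(1-k^2\p_z^2)[\cdots]$, a perfect derivative of mean zero, and the remaining $-\sigma\ell^2V$ keeps mean zero on $L_0^2(\mathbb{T})$. A direct computation then gives
\[
\mathcal{T}_a(\lambda,\ell)\mathbf{1} = \p_z^2\big((b+1)w-k^2w_{zz}\big)-\sigma\ell^2,
\]
whose mean over $\mathbb{T}$ equals $-\sigma\ell^2\neq0$. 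Hence, in the splitting $\mathbb{C}\mathbf{1}\oplus L_0^2(\mathbb{T})$, the operator is block lower-triangular with scalar corner $-\sigma\ell^2$ and lower-right block $\mathcal{T}_a(\lambda,\ell)|_{L_0^2(\mathbb{T})}$; since the corner is invertible, the full operator is invertible iff the lower-right block is, which closes the chain.

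The main obstacle I anticipate is not the factorization algebra, which is routine once one commits to extracting $(1-k^2\p_z^2)\p_z$, but rather the careful handling of the constant (mean) direction together with the matching of function-space scales. One must confirm that $\mathcal{A}_a(\ell)$ is a well-defined closed operator on $L_0^2(\mathbb{T})$ with domain $H^1(\mathbb{T})\cap L_0^2(\mathbb{T})$ and compact resolvent, so that the statement $\lambda\in\operatorname{spec}(\mathcal{A}_a(\ell))$ is unambiguous and robust under the bootstrap; the nonlocal factors $(1-k^2\p_z^2)^{-1}$ and $\p_z^{-2}$ must be tracked so that every intermediate operator lands in the intended space, and the role of $\ell\neq0$ in keeping the scalar corner nonzero must be kept explicit throughout.
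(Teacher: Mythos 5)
Your proposal is correct and takes essentially the same approach as the paper, which defers its proof to \cite[Lemma 4.1, Corollary 4.2]{Har11}: the splitting $L^2(\mathbb{T})=\mathbb{C}\mathbf{1}\oplus L_0^2(\mathbb{T})$ with scalar corner $-\sigma\ell^2$ (nonzero precisely because $\ell\neq0$), the factorization $\mathcal{T}_a(\lambda,\ell)|_{L_0^2(\mathbb{T})}=(1-k^2\p_z^2)\p_z\bigl(\lambda-\mathcal{A}_a(\ell)\bigr)$, and the invertibility of $(1-k^2\p_z^2)\p_z$ on the mean-zero Sobolev scale, which is exactly the paper's added ingredient that $1-k^2\p_z^2:H^{s+2}(\mathbb{T})\to H^s(\mathbb{T})$ is invertible. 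Your elliptic bootstrap reconciling the $H^4$ and $H^1$ domains, resting on the leading coefficient $k^2(c-w)$ staying bounded away from zero in the small-amplitude regime, correctly supplies the regularity point the paper leaves implicit.
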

The proof of the above result follows along similar lines as \cite[Lemma 4.1, Corollary 4.2]{Har11}, together with the fact that $1 - k^2 \partial_z^2 : H^{s+2}(\mathbb{T}) \to H^s(\mathbb{T})$ is invertible. Therefore it suffices to analyze the spectrum of $\mathcal{A}_{a}(\ell)$. Since it has a compact resolvent, the spectrum consists of isolated eigenvalues with finite algebraic multiplicity. Moreover, the symmetry of $w$ leads to the following symmetry of the spectrum of $\mathcal{A}_{a}(\ell)$, the proof of which follows along the same line as \cite[Lemma 4.3]{Har11}, and hence we omit it.
\begin{lemma}\label{lem4.1}
The spectrum of $\mathcal{A}_{a}(\ell)$ is symmetric with respect to both the real and imaginary axes.
\end{lemma}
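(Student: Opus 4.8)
The plan is to exhibit two elementary symmetries of the operator $\mathcal{A}_a(\ell)$ — one coming from the reality of its coefficients and one from the spatial evenness of the wave $w$ — and then to combine them. Since $\mathcal{A}_a(\ell)$ has compact resolvent, its spectrum consists of isolated eigenvalues of finite algebraic multiplicity, so it suffices to argue at the level of eigenpairs; the same relations also transfer to resolvents, so the conclusion is insensitive to this reduction. Introduce the anti-linear complex conjugation $\mathcal{C}V := \overline{V}$ and the spatial reflection $(\mathcal{R}V)(z) := V(-z)$. Both are bounded involutions on $L_0^2(\mathbb{T})$ that preserve the domain $H^1(\mathbb{T})\cap L_0^2(\mathbb{T})$, since reflection preserves Sobolev regularity and the zero-mean condition, and conjugation is an isometry commuting with the zero-mean constraint.

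First I would establish the symmetry about the real axis. Every coefficient appearing in \eqref{3.11} is real: the profile $w$ and its derivatives are real-valued by Lemma \ref{le1}, and $c,\kappa,b,k,\sigma,\ell^2$ are all real. Consequently $\mathcal{A}_a(\ell)$ commutes with $\mathcal{C}$, that is $\overline{\mathcal{A}_a(\ell)V}=\mathcal{A}_a(\ell)\,\overline{V}$. Hence if $\mathcal{A}_a(\ell)V=\lambda V$ then $\mathcal{A}_a(\ell)(\mathcal{C}V)=\bar\lambda\,(\mathcal{C}V)$, so $\lambda$ and $\bar\lambda$ belong to the spectrum simultaneously: the spectrum is symmetric with respect to the real axis.

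Next I would establish the symmetry about the origin using $\mathcal{R}$. The relevant commutation rules are $\mathcal{R}\p_z=-\p_z\mathcal{R}$, so $\mathcal{R}$ commutes with every even-order operator, in particular with $(1-k^2\p_z^2)^{\pm1}$ and with $\p_z^{-2}$, while multiplication by an even function commutes with $\mathcal{R}$ and multiplication by an odd function anti-commutes. Since $w$ is even, $w$ and $w_{zz}$ are even and $w_z$ is odd. Inspecting the bracket in \eqref{3.11} term by term, the multiplications by $\kappa$, $(b+1)w$, $-k^2w_{zz}$, and $-k^2w\p_z^2$ all commute with $\mathcal{R}$, as do $c(1-k^2\p_z^2)$ and the nonlocal term $-\sigma\ell^2\p_z^{-2}$. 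The only term requiring care is $-k^2(b-1)w_z\p_z$: here the oddness of $w_z$ contributes a sign and the factor $\p_z$ contributes another, so the product again commutes with $\mathcal{R}$. Therefore the factor to the right of the leading derivative, namely $(1-k^2\p_z^2)^{-1}$ times the bracket, commutes with $\mathcal{R}$, whereas the leading $\p_z$ anti-commutes; this yields the conjugation relation $\mathcal{R}\,\mathcal{A}_a(\ell)\,\mathcal{R}^{-1}=-\mathcal{A}_a(\ell)$. As a consequence, $\mathcal{A}_a(\ell)V=\lambda V$ forces $\mathcal{A}_a(\ell)(\mathcal{R}V)=-\lambda\,(\mathcal{R}V)$, so the spectrum is symmetric with respect to the origin.

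Finally, combining the two symmetries closes the argument: invariance under $\lambda\mapsto\bar\lambda$ together with invariance under $\lambda\mapsto-\lambda$ automatically produces invariance under $\lambda\mapsto-\bar\lambda$, which is precisely reflection across the imaginary axis; hence the spectrum is symmetric about both coordinate axes. I expect the one genuinely computational point — and the only place an error could hide — to be the verification that the mixed first-order term $w_z\p_z$ commutes with $\mathcal{R}$, i.e. that the sign produced by the odd coefficient $w_z$ cancels exactly the sign produced by $\p_z$ under reflection. The remaining bookkeeping is routine, and the nonlocal even-order factors $(1-k^2\p_z^2)^{-1}$ and $\p_z^{-2}$ present no difficulty since both commute with $\mathcal{R}$.
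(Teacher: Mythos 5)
Your proof is correct and follows essentially the same symmetry argument the paper invokes (via the reference to \cite[Lemma 4.3]{Har11} and as carried out explicitly in the paper's Lemma \ref{lem-symmetry} for the non-periodic case, where the anti-linear map $\mathcal{S}=\mathcal{C}\mathcal{R}$ is just the composition of your two involutions): real coefficients give the commutation with conjugation and hence $\lambda\mapsto\bar\lambda$ symmetry, while the evenness of $w$ --- including the sign cancellation in the term $w_z\p_z$, which you verify correctly --- gives the anti-commutation with reflection and hence $\lambda\mapsto-\lambda$ symmetry, and the two together yield symmetry about both axes.
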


The operator $\mathcal{A}_{0}(\ell)$ has constant coefficients, and a straightforward calculation reveals that
\begin{equation*}
\mathcal{A}_{0}(\ell) \mathrm{e}^{i n z}=i \omega_{n, \ell} \mathrm{e}^{i n z} \quad \text { for all }\quad n \in \mathbb{Z}^*:=\mathbb{Z} \backslash\{0\},
\end{equation*}
where
\begin{align*}
 \omega_{n, \ell} & = n\left(c_0-\frac{\kappa}{1+k^2 n^2}-\frac{\sigma\ell^2}{n^2\left(1+k^2 n^2\right)}\right)\nonumber\\
 & = n\left(\frac{\kappa}{1+k^2}-\frac{\kappa}{1+k^2 n^2}-\frac{\sigma\ell^2}{n^2\left(1+k^2 n^2\right)}\right)
 \nonumber\\
& = \frac{n}{1+k^2 n^2}\left(\kappa \frac {k^2(n^2-1)}{1+k^2}-\frac{\sigma\ell^2}{n^2}\right).
\end{align*}
Consequently, the $L_0^2(\mathbb{T})$-spectrum of $\mathcal{A}_{0}(\ell)$ consists of purely imaginary eigenvalues of finite multiplicity. On the other hand, we write
\[
\mathcal{A}_{a}(\ell)=\mathcal{A}_{0}(\ell)+\widetilde{\mathcal{A}}_a,
\]
with
\begin{equation}\label{3.15}
\mathcal{A}_{0}(\ell)=\p_z\left(1-k^2 \p_z^2\right)^{-1}\left(-c_0k^2\p_z^2+ c_0- \kappa + \sigma \ell^2 \p_z^{-2}\right)
\end{equation}
and
\begin{align*}
\widetilde{\mathcal{A}}_a
&=\mathcal{A}_{a}(\ell)-\mathcal{A}_{0}(\ell)\nonumber\\
&=\p_z\left(1-k^2 \p_z^2\right)^{-1}\left[(c-c_0)\left(1-k^2 \p_z^2\right)\right.\nonumber\\
&\quad\left.-\left((b+1) w-k^2w_{z z}
-k^2(b-1)w_z\p_z -k^2w\p_z^2\right)\right].
\end{align*}
A direct calculation shows that
\begin{equation}\label{3.08}
\|\widetilde{\mathcal{A}}_a\|_{H^1(\mathbb{T})\rightarrow L^2(\mathbb{T})}=O(|a|) \quad \text{as } a \to 0.
\end{equation}
A standard perturbation argument ensures that the spectra of $\mathcal{A}_{a}(\ell)$ and $\mathcal{A}_{0}(\ell)$ stay close for $|a|$ small. Due to the symmetry in Lemma \ref{lem4.1}, it follows that for $|a|$ sufficiently small the bifurcation of eigenvalues of $\mathcal{A}_{a}(\ell)$ from the imaginary axis happens in pairs, and is completely due to the collisions of eigenvalues of $\mathcal{A}_{0}(\ell)$ on the imaginary axis.

Note that the operator $\mathcal{A}_{a}(\ell)$ admits a natural decomposition
\[
\mathcal{A}_{a}(\ell) = \mathcal{J} \mathcal{K}_{a}(\ell),
\]
where $\mathcal{J} := \p_z\left(1-k^2 \p_z^2\right)^{-1}$ is skew-adjoint and invertible in $L^2_0(\mathbb{T})$, and
\[
\mathcal{K}_{a}(\ell) := c\left(1-k^2 \p_z^2\right) - \big[ \kappa +{ {(b+1)}} w-k^2w_{z z}
-k^2{{(b-1)}}w_z\p_z -k^2w\p_z^2-\sigma\ell^2 \p_z^{-2} \big].
\]
However, it can be checked that, except for $b = 2, 3$, the operator $\mathcal{K}_{a}(\ell)$ fails to be self-adjoint. Therefore the standard index counting for Hamiltonian system does not immediately apply to $\mathcal{A}_{a}(\ell)$.

The way to go around this issue is to investigate the spectrum of the operator $\mathcal{A}_{0}(\ell)$, and then use perturbation method to transfer the spectral information to $\mathcal{A}_{a}(\ell)$. It turns out that we can decompose the operator $\mathcal{A}_{0}(\ell)$ into a composition of $\mathcal{J}$ and a self-adjoint operator $\mathcal{K}_{0}(\ell)$:
\[
\mathcal{A}_{0}(\ell) = \mathcal{J} \mathcal{K}_{0}(\ell),
\]
where
\[
\mathcal{K}_{0}(\ell)=c_0-c_0k^2\p_z^2- \kappa + \sigma \ell^2 \p_z^{-2}
\]
and $c_0$ is given in \eqref{2.5}.

Standard linear Hamiltonian theory suggests to track the Krein signature to detect the onset of  instability bifurcation. Specifically, the Krein signature $K_n$ of an eigenvalue $i\omega_{n, \ell}$ of $\mathcal{A}_{0}(\ell)$ is defined as
\begin{equation}\label{3.16}
\begin{aligned}
K_n :=\operatorname{sgn}\left(\left\langle\mathcal{K}_0(\ell) \mathrm{e}^{i n z}, \mathrm{e}^{i n z}\right\rangle\right)
=\operatorname{sgn}\left( \kappa \frac {k^2(n^2-1)}{1+k^2}-\frac{\sigma\ell^2}{n^2}\right).
\end{aligned}
\end{equation}
A necessary condition for a pair of eigenvalues to leave imaginary axis after collision is that they carry opposite Krein signatures.

\subsection{$b$-KP-I equation}\label{se4.1}
We first consider the case $\sigma= -1$. For this we compute to get
\[
\operatorname{spec}_{L_0^2(\mathbb{T})}\left(\mathcal{K}_{0}(\ell)\right)
=\left\{ \kappa \frac {k^2(n^2-1)}{1+k^2}+\frac{\ell^2}{n^2}\;;\;n\in \mathbb{Z}^*\right\}.
\]

\subsubsection{Finite and short wavelength transverse perturbations}
It's easy to see from the above that when $\sigma = -1$, the Krein signatures of all eigenvalues remain the same, which implies that for $|a|$ sufficiently small, the eigenvalues will not bifurcate from the imaginary axis even if there is a collision away from the origin. On the other hand, the only possible scenario when eigenvalues split into the complex plane as unstable eigenvalues is when $\ell$ is small and the collision occurs at the origin. Therefore we have the following lemma.
\begin{lemma}\label{lem4.3}
For any given $\ell^*>0$ there exists $|a|$ sufficiently small such that for all $|\ell| > \ell^*$, the spectrum of $\mathcal{A}_{a}(\ell)$ is purely imaginary.
\end{lemma}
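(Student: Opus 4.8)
The plan is to transfer the completely explicit spectral picture of the constant-coefficient operator $\mathcal{A}_0(\ell)$ to $\mathcal{A}_a(\ell)$ through the splitting $\mathcal{A}_a(\ell)=\mathcal{A}_0(\ell)+\widetilde{\mathcal{A}}_a$ and the bound $\|\widetilde{\mathcal{A}}_a\|_{H^1(\mathbb{T})\to L^2(\mathbb{T})}=O(|a|)$ from \eqref{3.08}, while arranging that the threshold on $|a|$ can be chosen once and for all, uniformly over $|\ell|>\ell^*$. The real content of the lemma is exactly this uniformity: for a single fixed $\ell$ the conclusion would be immediate. I note at the outset that $\widetilde{\mathcal{A}}_a$ carries no dependence on $\ell$, so its size is already uniform in $\ell$.

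The first step is to produce a spectral gap at the origin that is uniform in $\ell$. Since $\sigma=-1$,
\[
\omega_{n,\ell}=\frac{n}{1+k^2n^2}\left(\kappa\,\frac{k^2(n^2-1)}{1+k^2}+\frac{\ell^2}{n^2}\right),
\]
and both terms in the bracket are nonnegative, the second strictly positive. For $n=\pm1$ one has $|\omega_{\pm1,\ell}|=\ell^2/(1+k^2)\ge(\ell^*)^2/(1+k^2)$, whereas for $|n|\ge2$ the first term alone furnishes a lower bound independent of $\ell$. Hence there is $\delta_0=\delta_0(\ell^*,k,\kappa)>0$ with $|\omega_{n,\ell}|\ge\delta_0$ for all $n\in\mathbb{Z}^*$ and all $|\ell|>\ell^*$. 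This is precisely where the hypothesis enters: it keeps the pair $i\omega_{\pm1,\ell}$ away from the origin, the only point at which eigenvalues of $\mathcal{A}_0(\ell)$ can reach $0$ in the limit $\ell\to0$. The same computation gives $\langle\mathcal{K}_0(\ell)e^{inz},e^{inz}\rangle=\kappa\,k^2(n^2-1)/(1+k^2)+\ell^2/n^2\ge\delta_0$, so every $K_n=+1$ with a margin that is uniform over $|\ell|>\ell^*$.

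Next I would run a perturbation argument organized by spectral clusters, with all constants uniform in $\ell$. Since $\mathcal{A}_0(\ell)$ is normal — it is diagonal in the orthonormal basis $\{e^{inz}\}_{n\in\mathbb{Z}^*}$ of $L_0^2(\mathbb{T})$ — I would group its eigenvalues into clusters separated by gaps $\gg|a|$ and enclose each by a circle on which $\|\widetilde{\mathcal{A}}_a(\mathcal{A}_0(\ell)-z)^{-1}\|<1$, using \eqref{3.08} and the relative boundedness of $\widetilde{\mathcal{A}}_a$; the resulting Riesz projections show the eigenvalues of $\mathcal{A}_a(\ell)$ sit in $O(|a|)$-neighborhoods of the clusters. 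The symmetry of Lemma \ref{lem4.1} then pins every simple, isolated perturbed eigenvalue to the imaginary axis, since being the only eigenvalue near $i\omega_{n,\ell}$ it must coincide with its own reflection $-\overline{\lambda}$, forcing $\Re\lambda=0$. Thus an eigenvalue can leave $i\mathbb{R}$ only from a cluster enclosing a (possibly multiple) collision. Each such cluster lies at distance $\ge\delta_0$ from the origin and carries the definite signature $K_n=+1$; reducing $\mathcal{A}_a(\ell)=\mathcal{J}\mathcal{K}_a(\ell)$ to the finite-dimensional cluster eigenspace and invoking the uniform positivity of $\langle\mathcal{K}_0(\ell)\cdot,\cdot\rangle$, I would show the reduced spectrum stays purely imaginary, which is the statement that the necessary condition of opposite signatures for an off-axis split (stated after \eqref{3.16}) is never met. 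Assembling the clusters yields $\operatorname{spec}(\mathcal{A}_a(\ell))\subset i\mathbb{R}$.

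The main obstacle is this last reduction, made uniform in $\ell$. Because $\mathcal{K}_a(\ell)$ fails to be self-adjoint for $b\ne2,3$, the standard theorem that same-signature collisions cannot leave the axis does not apply off the shelf, and the reduction at each collision must be carried out by hand; the one quantity that protects the reduced spectrum, namely the margin $\delta_0$ in the Krein form, is exactly the quantity that collapses as $\ell\to0$ on the $n=\pm1$ modes, which is why the restriction $|\ell|>\ell^*$ is unavoidable. Making the resolvent and projection estimates uniform simultaneously in the bounded-$\ell$ collision regime and in the large-$\ell$ regime — where $\mathcal{A}_0(\ell)$ itself grows through the $\ell^2\p_z^{-2}$ term and near-collisions must be absorbed into clusters rather than separated — is the delicate bookkeeping the argument must absorb.
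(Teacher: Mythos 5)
Your proposal is correct and follows essentially the same route as the paper: the paper's proof of Lemma \ref{lem4.3} consists precisely of the observations you make --- for $\sigma=-1$ every Krein signature in \eqref{3.16} equals $+1$, so (by the necessary condition of opposite signatures) collisions away from the origin cannot eject eigenvalues from the imaginary axis under the $O(|a|)$ perturbation \eqref{3.08}, the symmetry of Lemma \ref{lem4.1} pins simple eigenvalues to the axis, and the restriction $|\ell|>\ell^*$ keeps the $n=\pm1$ eigenvalues uniformly away from the only dangerous collision point, the origin. The ``main obstacle'' you flag --- that $\mathcal{K}_a(\ell)$ is not self-adjoint for $b\ne 2,3$, so the same-signature no-splitting criterion is not off the shelf --- is exactly the point the paper sidesteps by computing signatures only for the decomposition $\mathcal{A}_0(\ell)=\mathcal{J}\mathcal{K}_0(\ell)$ of the unperturbed operator and then asserting the necessary-condition statement following \eqref{3.16}, so your cluster-by-cluster reduction sketch is, if anything, more explicit than the printed argument.
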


\subsubsection{Long wavelength transverse perturbations}\label{4.1.2}
The discussion above leaves possible the onset of instability due to eigenvalue coalescence at the origin, for small $\ell$. This corresponds to the transverse perturbations being of long wavelength.

Different from how we obtain Lemma \ref{lem4.3}, now we will perform a double perturbation by regarding $\mathcal{A}_{a}(\ell)$ as a perturbation of the constant-coefficient operator
\begin{equation*}
\mathcal{A}_{0}(0)=\p z\left(1-k^2 \p_z^2\right)^{-1} \mathcal{K}_{0}(0)
=\p z\left(1-k^2 \p_z^2\right)^{-1}\left(-c_0k^2\p_z^2+ c_0- \kappa \right)
\end{equation*}
acting in $L_0^2(\mathbb{T})$. A direct calculation shows that the spectrum of $\mathcal{A}_{0}(0)$ is given by
\begin{equation}\label{spec A_0(0)}
\operatorname{spec}_{L_0^2(\mathbb{T})}\left(\mathcal{A}_{0}(0)\right)
=\left\{in  r_*(n) \;;\;n\in \mathbb{Z}^*\right\}, \quad \text{where} \quad r_*(n) := \kappa \left(\frac{1}{1+k^2}-\frac{1}{1+k^2 n^2}\right).
\end{equation}
In particular, zero is a double eigenvalue of $\mathcal{A}_{0}(0)$, and the remaining eigenvalues are all simple, purely imaginary, and located outside the open ball $B\left(0; r_*(2) \right)$. Besides, letting
\begin{equation*}
\widehat{\mathcal{A}}_a(\ell) := \mathcal{A}_a(\ell)-\mathcal{A}_0(0)=\widetilde{\mathcal{A}}_a + \p z\left(1-k^2 \p_z^2\right)^{-1}(\sigma\ell^2\p_z^{-2}),
\end{equation*}
from \eqref{3.08}, we get
\[
\left\|\widehat{\mathcal{A}}_a(\ell)\right\|_{H^1\rightarrow L^2} = O\left(\ell^2+|a|\right).
\]

We proceed similarly to the proof in \cite[Lemma 4.7]{Har11} to get the following lemma.
\begin{lemma}\label{lem4.2}
The following properties hold, for any $\ell$ and $a$ sufficiently small.
\begin{enumerate}[label=\textup{(\alph*)}]
\item The spectrum of $\mathcal{A}_a(\ell)$ decomposes as
\[
\operatorname{spec}_{L_0^2(\mathbb{T})}\left(\mathcal{A}_a(\ell)\right)
=\operatorname{spec}_0\left(\mathcal{A}_a(\ell)\right) \cup \operatorname{spec}_1\left(\mathcal{A}_a(\ell)\right),
\]
with
\begin{equation*}
\begin{aligned}
&\operatorname{spec}_0\left(\mathcal{A}_a(\ell)\right) \subset B\left(0 ; \frac{r_*(2)}{2}\right), \qquad \operatorname{spec}_1\left(\mathcal{A}_a(\ell)\right)
\subset \mathbb{C} \backslash \overline{B\left(0; r_*(2) \right)},
\end{aligned}
\end{equation*}
where $r_*(n)$ is defined in \eqref{spec A_0(0)}. 
\item The spectral projection $\Pi_a(\ell)$ associated with $\operatorname{spec}_0\left(\mathcal{A}_a(\ell)\right)$ satisfies $\left\|\Pi_a(\ell)-\Pi_0(0)\right\|\\=O\left(\ell^2+|a|\right)$.
\item The spectral subspace $\mathcal{X}_a(\ell)=\Pi_a(\ell)\left(L_0^2(\mathbb{T})\right)$ is two dimensional.
\end{enumerate}
\end{lemma}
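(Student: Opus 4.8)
The plan is to realize both spectral projections as Riesz integrals over a fixed contour and to control their difference through a resolvent expansion, exploiting the spectral gap of the constant-coefficient operator $\mathcal{A}_0(0)$ recorded in \eqref{spec A_0(0)}. Because $0$ is an isolated double eigenvalue of $\mathcal{A}_0(0)$ with eigenspace $\operatorname{span}\{e^{iz},e^{-iz}\}$, while all remaining eigenvalues lie outside $\overline{B(0;r_*(2))}$, the closed annulus $\mathcal{R}=\{\,r_*(2)/2\le|\zeta|\le r_*(2)\,\}$ lies in the resolvent set of $\mathcal{A}_0(0)$. I would fix the contour $\Gamma=\{|\zeta|=\rho\}$ with $\rho\in(r_*(2)/2,r_*(2))$, which encloses only the zero eigenvalue, and set
\[
\Pi_0(0)=\frac{1}{2\pi i}\oint_\Gamma R_0(\zeta)\,d\zeta,\qquad R_0(\zeta):=(\zeta-\mathcal{A}_0(0))^{-1}.
\]

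First I would establish the key resolvent estimate: $R_0(\zeta)$ is bounded as an operator $L_0^2(\mathbb{T})\to H^1(\mathbb{T})\cap L_0^2(\mathbb{T})$, uniformly for $\zeta\in\mathcal{R}$. Since $\mathcal{A}_0(0)$ acts diagonally in the Fourier basis as multiplication by $i n\,r_*(n)$, this reduces to the multiplier bound $\sup_{n\in\mathbb{Z}^*}\tfrac{1+n^2}{|\zeta-i n r_*(n)|^2}\le C$ for $\zeta\in\mathcal{R}$: the denominators stay bounded away from zero on $\mathcal{R}$ (as $|n\,r_*(n)|$ vanishes only at $|n|=1$ and is $\ge 2r_*(2)$ for $|n|\ge 2$), while the growth $n\,r_*(n)\sim \kappa n/(1+k^2)$ as $|n|\to\infty$ exactly absorbs the factor $1+n^2$. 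Next, writing
\[
\zeta-\mathcal{A}_a(\ell)=\big(I-\widehat{\mathcal{A}}_a(\ell)R_0(\zeta)\big)(\zeta-\mathcal{A}_0(0))
\]
and combining the uniform bound on $\|R_0(\zeta)\|_{L^2\to H^1}$ with the estimate $\|\widehat{\mathcal{A}}_a(\ell)\|_{H^1\to L^2}=O(\ell^2+|a|)$ established above, I obtain $\|\widehat{\mathcal{A}}_a(\ell)R_0(\zeta)\|_{L^2\to L^2}<1$ for $\ell,a$ small, uniformly on $\mathcal{R}$. A Neumann series then shows that $\zeta-\mathcal{A}_a(\ell)$ is invertible for every $\zeta\in\mathcal{R}$, so $\operatorname{spec}_{L_0^2(\mathbb{T})}(\mathcal{A}_a(\ell))$ avoids the annulus; this separates the spectrum into a part inside $B(0;r_*(2)/2)$ and a part outside $\overline{B(0;r_*(2))}$, which proves (a), and yields $R_a(\zeta)=R_0(\zeta)\big(I-\widehat{\mathcal{A}}_a(\ell)R_0(\zeta)\big)^{-1}$ bounded $L^2\to H^1$ uniformly on $\Gamma$.

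For (b), I define $\Pi_a(\ell)=\tfrac{1}{2\pi i}\oint_\Gamma R_a(\zeta)\,d\zeta$ and use the second resolvent identity $R_a(\zeta)-R_0(\zeta)=R_a(\zeta)\widehat{\mathcal{A}}_a(\ell)R_0(\zeta)$. Reading the three factors as $R_0(\zeta):L^2\to H^1$, then $\widehat{\mathcal{A}}_a(\ell):H^1\to L^2$ (of norm $O(\ell^2+|a|)$), then $R_a(\zeta):L^2\to L^2$, gives $\|R_a(\zeta)-R_0(\zeta)\|_{L^2\to L^2}=O(\ell^2+|a|)$ uniformly on $\Gamma$; integrating over the fixed-length contour yields $\|\Pi_a(\ell)-\Pi_0(0)\|=O(\ell^2+|a|)$. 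Finally, for (c), once $\|\Pi_a(\ell)-\Pi_0(0)\|<1$ the two projections have equal rank, so $\dim\mathcal{X}_a(\ell)=\dim\operatorname{range}\Pi_0(0)=2$.

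The main obstacle is the derivative bookkeeping in the perturbation step: $\widehat{\mathcal{A}}_a(\ell)$ is not bounded on $L_0^2(\mathbb{T})$ but only relatively bounded, being a first-order operator, so the smallness must be extracted from the composition $\widehat{\mathcal{A}}_a(\ell)R_0(\zeta)$ rather than from $\widehat{\mathcal{A}}_a(\ell)$ alone. This is precisely why the sharp resolvent smoothing $R_0(\zeta):L^2\to H^1$ — gaining exactly one derivative, matching the one derivative lost by $\widehat{\mathcal{A}}_a(\ell)$ — is essential, and verifying that this gain is \emph{uniform} over $\mathcal{R}$, including the large-$n$ asymptotics of $n\,r_*(n)$, is the technical heart of the argument.
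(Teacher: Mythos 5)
Your proposal is correct and follows essentially the same route as the paper, which does not write out a proof but defers to the perturbation argument of Haragus (Lemma 4.7 of the cited work): a Riesz projection over a fixed contour in the spectral gap of $\mathcal{A}_0(0)$, a Neumann series built from the uniform resolvent smoothing bound $\|R_0(\zeta)\|_{L_0^2\to H^1}\le C$ combined with $\|\widehat{\mathcal{A}}_a(\ell)\|_{H^1\to L^2}=O(\ell^2+|a|)$, and the rank-preservation of projections at distance less than one. Your identification of the relative (rather than absolute) boundedness of $\widehat{\mathcal{A}}_a(\ell)$ as the technical crux, resolved by the one-derivative gain of the resolvent matching the first-order character of the perturbation, is exactly the point that makes the argument work.
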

This lemma ensures that for sufficiently small $\ell$ and $a$, bifurcating eigenvalues from the origin are uniformly separated from the rest of the spectrum.  In the following lemma, we show that for sufficiently small $\ell$ and $a$, the two eigenvalues in $\operatorname{spec}_0\left(\mathcal{A}_a(\ell)\right)$ leave imaginary axes.
\begin{theorem}\label{the4.1}
Assume $|\ell|, |a|$ are sufficiently small. Denote
\begin{equation}\label{def l_a}
\ell^2_a := \left[(b+1)+(7-2b)k^2\right]\frac{(b+1)\left(1+k^2\right)^2}{12 \kappa k^2}a^2.
\end{equation}
If $\ell^2_a>0$, then there exists some
\[
\ell_*^2 := \ell^2_a + O(a^4) > 0
\] such that
\begin{enumerate}[label=\textup{(\roman*)}]
\item for any $\ell^2 > \ell_*^2$, the spectrum of $\mathcal{A}_a(\ell)$ is purely imaginary.
\item for any $\ell^2< \ell_*^2$, the spectrum of $\mathcal{A}_a(\ell)$ is purely imaginary, except for a pair of simple real eigenvalues with opposite signs.
\end{enumerate}
If $\ell_a^2 < 0$, then the spectrum of $\mathcal{A}_a(\ell)$ is purely imaginary. 
\end{theorem}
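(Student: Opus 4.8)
The plan is to reduce the spectral problem, on the two-dimensional invariant subspace furnished by Lemma \ref{lem4.2}, to the eigenvalue problem for a $2\times 2$ matrix, and then to read off the sign of its discriminant. First I would record that all instability must come from the eigenvalue pair near the origin. The remaining part $\operatorname{spec}_1(\mathcal{A}_a(\ell))$ bifurcates from the simple purely imaginary eigenvalues $in\,r_*(n)$, $|n|\ge 2$, of $\mathcal{A}_0(0)$ in \eqref{spec A_0(0)}, and for $\sigma=-1$ all of these carry the \emph{same} Krein signature $K_n=+1$ by \eqref{3.16} (since $n^2-1>0$). Hence, even if they collide under the perturbation, they cannot leave the imaginary axis, and it suffices to analyze the restriction $M_a(\ell)$ of $\mathcal{A}_a(\ell)$ to the subspace $\mathcal{X}_a(\ell)$.

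Next I would use the symmetries to fix the shape of $M_a(\ell)$. The operator $\mathcal{A}_a(\ell)$ in \eqref{3.11} has real coefficients, so it commutes with complex conjugation; and a parity count (the outer $\p_z$ is odd, while the bracket is even in $z$ because $w$ is even, so that $w_z\p_z$ is even) shows that it \emph{anticommutes} with the reflection $(Rf)(z)=f(-z)$. These are exactly the two symmetries behind Lemma \ref{lem4.1}. Choosing a symmetry-adapted basis of $\mathcal{X}_a(\ell)$ deforming from $\{e^{iz},e^{-iz}\}$, which $R$ and conjugation interchange, the anticommutation forces a traceless matrix with opposite diagonal entries, while reality forces the diagonal to be purely imaginary and the two off-diagonal entries to be purely imaginary and opposite. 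Thus
\[
M_a(\ell)=\begin{pmatrix} i\alpha & i\beta\\ -i\beta & -i\alpha\end{pmatrix},\qquad \alpha,\beta\in\R,
\]
whose eigenvalues are $\pm\sqrt{\beta^2-\alpha^2}$. Consequently the two eigenvalues of $\operatorname{spec}_0(\mathcal{A}_a(\ell))$ form a real pair of opposite sign when $\beta^2>\alpha^2$ and a purely imaginary pair when $\beta^2<\alpha^2$, so the entire argument reduces to the sign of $\beta^2-\alpha^2$.

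The computational core is the expansion of $\alpha$ and $\beta$ in $a$ and $\ell$. The transverse term contributes the exact value $\ell^2/(1+k^2)$ to the diagonal (as in the computation of $\omega_{n,\ell}$ with $n=1$), and the term $(c-c_0)(1-k^2\p_z^2)$ in $\widetilde{\mathcal A}_a$ contributes $c-c_0=a^2c_2+O(a^4)$. Since the $O(a)$ part of $\widetilde{\mathcal A}_a$ sends $e^{\pm iz}$ into the modes $e^{0},e^{\pm 2iz}$ lying outside $\mathcal{X}_a(\ell)$, both the remaining diagonal correction and the whole off-diagonal coupling $\beta$ appear only at order $a^2$, and must be extracted by second-order (Lyapunov–Schmidt) perturbation theory: one feeds the $O(a)$ eigenfunction corrections back through $\mathcal{J}(1-k^2\p_z^2)^{-1}$ and projects onto $e^{\mp iz}$. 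This step, which uses the coefficients $A_0,A_2,c_2$ of \eqref{2.5} and must carefully account for the quasilinear pieces $k^2w\p_z^2$, $k^2(b-1)w_z\p_z$, $k^2 w_{zz}$, is the main obstacle: one has to verify that the several $O(a^2)$ contributions collapse into the factorization
\[
\beta^2-\alpha^2=-\frac{1}{(1+k^2)^2}\bigl(\ell^2-\ell_*^2\bigr)\bigl(\ell^2-\ell_-^2\bigr)+O(a^4),
\]
with $\ell_*^2=\ell_a^2+O(a^4)$ the quantity in \eqref{def l_a} and $\ell_-^2<0$ (equivalently, that the $O(a^2)$ diagonal coefficient is nonnegative, so that the second factor stays positive on $\ell^2\ge 0$). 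This is precisely where the factor $(b+1)+(7-2b)k^2$, and hence $\ell_a^2$, must emerge.

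Finally I would conclude by a sign analysis of the downward parabola $\ell^2\mapsto\beta^2-\alpha^2$. When $\ell_a^2>0$ its unique nonnegative root is $\ell_*^2=\ell_a^2+O(a^4)$, and the factorization gives a real pair of opposite sign (instability) for $0<\ell^2<\ell_*^2$ and a purely imaginary pair for $\ell^2>\ell_*^2$; combined with the imaginary $\operatorname{spec}_1$, this yields (i) and (ii). When $\ell_a^2<0$, the matching forces the $O(a^2)$ diagonal coefficient to dominate, both roots $\ell_*^2,\ell_-^2$ are negative, and hence $\beta^2-\alpha^2<0$ for every $\ell$; thus $\operatorname{spec}_0(\mathcal{A}_a(\ell))$ remains on the imaginary axis and the whole spectrum is purely imaginary, which is the final assertion.
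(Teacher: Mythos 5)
Your overall architecture matches the paper's proof: you use the decomposition of Lemma \ref{lem4.2}, dispose of $\operatorname{spec}_1$ via the constancy of the Krein signature \eqref{3.16} for $\sigma=-1$, reduce to a $2\times 2$ matrix on $\mathcal{X}_a(\ell)$, and finish with a sign analysis of $\lambda^2$. Your symmetry normal form $M_a(\ell)=\begin{pmatrix} i\alpha & i\beta\\ -i\beta & -i\alpha\end{pmatrix}$ is correct and consistent with the paper's real-basis matrix. But there is a genuine gap at the step you yourself flag as ``the main obstacle'': you never carry out the second-order computation that determines $\beta^2-\alpha^2$ to order $a^2$, and instead assert that the several $O(a^2)$ contributions ``must'' collapse to produce the coefficient $\left[(b+1)+(7-2b)k^2\right]A_2$ and hence $\ell_a^2$ of \eqref{def l_a}. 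Since the entire content of Theorem \ref{the4.1} is this explicit threshold --- its sign as a function of $(b,k^2)$ is exactly what separates stability from instability in Lemma \ref{lem4.4} and Figure \ref{stab I figure} --- a proof that postulates the answer at this point establishes only the soft statement that instability is governed by the sign of some $O(a^2)$ discriminant coefficient, not the theorem as stated. The computation is also not routine bookkeeping: in a basis deforming $\{e^{iz},e^{-iz}\}$, the diagonal receives an $a^2c_2$ contribution from $(c-c_0)(1-k^2\p_z^2)$ and further $O(a^2)$ contributions from $A_0$, $A_2$ and from feeding the $O(a)$ corrections back through the modes $n=0,\pm2$, including the quasilinear pieces $k^2w\p_z^2$, $k^2(b-1)w_z\p_z$, $k^2w_{zz}$; the specific cancellations are precisely where $(b+1)+(7-2b)k^2$ comes from, and nothing in your write-up certifies them.

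The paper sidesteps second-order perturbation theory altogether by choosing a basis built from the solution family itself: $\xi_a^0=-\frac1a\p_z w$ (the translation mode, which $\mathcal{A}_a(0)$ maps into the subspace with coefficient only $O(a^3)$) and $\xi_a^1$ a combination of $\p_m$- and $\p_a$-derivatives of $(w,c)$; the matrix $M_a(0)$ is then computed directly from the expansions \eqref{2.3}--\eqref{2.5} and added to $M_0(\ell)$. This is the cleanest way to fill your gap, and explains why in the paper's basis the diagonal entries vanish rather than carrying the $a^2 c_2$ term. One smaller inaccuracy: your factorization $\beta^2-\alpha^2=-\frac{1}{(1+k^2)^2}\bigl(\ell^2-\ell_*^2\bigr)\bigl(\ell^2-\ell_-^2\bigr)+O(a^4)$ with $\ell_-^2<0$ misstates the structure. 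As \eqref{4.14} shows, every term in $\lambda^2$, including the error, carries an overall factor $\ell^2$ (at $\ell=0$ the relevant pair sits at the origin, reflecting the translation/parameter Jordan structure), so the ``second root'' is exactly $\ell^2=0$ rather than some negative $\ell_-^2$. Your sign conclusions happen to survive because you only use positivity of the second factor for $\ell^2>0$, but that positivity is again part of the calculation you did not perform; likewise, in the case $\ell_a^2<0$ the phrase ``the matching forces the $O(a^2)$ diagonal coefficient to dominate'' is an assertion standing in for the same missing computation.
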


\begin{proof}Consider the decomposition of the spectrum of $\mathcal{A}_a(\ell)$ in lemma \ref{lem4.2}. We first study $\operatorname{spec}_1\left(\mathcal{A}_a(\ell)\right)$ for $\ell$ and $a$ sufficiently small. For $n \in \mathbb{Z}^* \backslash\{\pm 1\}$, as $\ell$ is sufficiently small, $K_n = 1$ in \eqref{3.16} for all $n$. This implies that even if eigenvalues in $\operatorname{spec}_1\left(\mathcal{A}_0(\ell)\right)$ collide, they remain on the imaginary axis. Then for all $\ell$ and $a$ sufficiently small,  ${\operatorname{spec}}_1\left(\mathcal{A}_a(\ell)\right)$ is a subset of the imaginary axis.

The eigenvalues in $\operatorname{spec}_0\left(\mathcal{A}_a(\ell)\right)$ are the eigenvalues of the restriction of $\mathcal{A}_a(\ell)$ to the two-dimensional spectral subspace $\mathcal{X}_a(\ell)$. We determine the location of these eigenvalues by computing successively a basis of $\mathcal{X}_a(\ell)$, the $2 \times 2$ matrix representing the action of $\mathcal{A}_a(\ell)$ on this basis, and the eigenvalues of this matrix.

For $a=0$, $\mathcal{A}_0(\ell)$ is an operator with constant coefficients, and
\[
\operatorname{spec}_0\left(\mathcal{A}_0(\ell)\right)
=\left\{i\frac{ \ell^2}{ 1 + k ^ { 2 } },-i\frac{ \ell^2}{1 + k ^ { 2 } }\;;\;n\in \mathbb{Z}^*\right\}.
\]
The associated eigenvectors are $e^{i z}$ and $e^{-i z}$, and we choose
\[
\xi_0^0(\ell)=\sin (z), \quad \xi_0^1(\ell)=\cos (z),
\]
as basis of the corresponding spectral subspace. Since
$$
\mathcal{A}_0(\ell) \xi_0^0(\ell)=\frac{ \ell^2}{ 1 + k ^ { 2 } } \xi_0^1(\ell), \quad \mathcal{A}_0(\ell) \xi_0^1(\ell)=-\frac{ \ell^2}{1 + k ^ { 2 }} \xi_0^0(\ell),
$$
the $2 \times 2$ matrix representing the action of $\mathcal{A}_0(\ell)$ on this basis is given by
$$
M_0(\ell)=\left(\begin{array}{cc}
0 & \frac{ \ell^2}{ 1 + k ^ { 2 } } \\
-\frac{ \ell^2}{ 1 + k ^ { 2 } } & 0
\end{array}\right).
$$

We use expansions of $w$ and $c$ in \eqref{2.3} and \eqref{2.4} to calculate the expansion of a basis $\left\{\xi_a^0(z), \xi_a^1(z)\right\}$ for $\mathcal{X}_a(\ell)$ for small $a$ and $\ell$ as
\begin{align*}
\xi_a^0(z) := & -\frac{1}{a} \partial_z w(z ; k, a, 0)=\sin z+2 a A_2 \sin 2 z+3 a^2 A_3 \sin 3 z+O\left(a^3\right), \\ 
\xi_a^1(z) := & -\frac{(1+k^2)^2}{\kappa k^2\left((b+1)+(b-1)k^2\right)}\left(\left(\partial_m c\right)\left(\partial_a w\right)-\left(\partial_a c\right)\left(\partial_m w\right)\right)(z ; k, a, 0) \nonumber\\
= & \cos z+2 a A_2 \cos 2 z+3 a^2 A_3 \cos 3 z+O\left(a^3\right). 
\end{align*}
As
\[
\begin{aligned}
\mathcal{A}_a(0)=&\left(1-k^2 \p_z^2\right)^{-1} \p_z\\
&\left[c\left(1-k^2 \p_z^2\right)-\left( \kappa +(b+1) w-k^2w_{z z}
-k^2(b-1)w_z\p_z -k^2w\p_z^2\right)\right]\\
=&\left(1-k^2 \p_z^2\right)^{-1} \p_z\\
&\left[-k^2\p_z(c-w)\p_z+\left(c- \kappa -(b+1) w+k^2w_{z z}
+(b-2)k^2w_z\p_z\right)\right],
\end{aligned}
\]
using expansions of $w$ and $c$ in \eqref{2.3} and \eqref{2.4}, we obtain that the action of $\mathcal{A}_a(0)$ on the basis $(\xi_a^0(z), \xi_a^1(z))$ is
\begin{equation*}
M_a(0)=\left(\begin{array}{cc}
0 & O\left( |a|^3 \right) \\\\
\displaystyle A_2 \frac{(b+1)+(7-2b)k^2}{1+k^2}a^2 + O\left( |a|^3 \right) & 0
\end{array}\right).
\end{equation*}
Together with the expression of $M_0(\ell)$, we get that
\begin{equation*}
M_a(\ell)=\left(\begin{array}{cc}
0 & \displaystyle  \frac{ \ell^2}{ 1 + k ^ { 2 } } + O\left(|a|\left(\ell^2+a^2\right)\right) \\\\
\displaystyle -\frac{ \ell^2}{ 1 + k ^ { 2 } }+A_2\frac{(b+1)+(7-2b)k^2}{1+k^2}a^2 + O\left(|a|\left(\ell^2+a^2\right)\right) & 0
\end{array}\right).
\end{equation*}
The two eigenvalues of $M_a(\ell)$, which are also the eigenvalues in $\operatorname{spec}_0\left(\mathcal{A}_a(\ell)\right)$, are roots of the characteristic polynomial
$$
P(\lambda)=\lambda^2+\frac{ \ell^2}{ 1 + k ^ { 2 } }\left(\frac{ \ell^2}{ 1 + k ^ { 2 } }-\frac{(b+1)+(7-2b)k^2}{1+k^2}A_2a^2 \right)+O\left(|a| \ell^2\left(\ell^2+a^2\right)\right)
$$
and we conclude that
\begin{equation}\label{4.14}
\begin{split}
\lambda^2=&-\frac{ \ell^2}{ 1 + k ^ { 2 } }\left(\frac{ \ell^2}{ 1 + k ^ { 2 } }-\frac{(b+1)+(7-2b)k^2}{1+k^2}A_2a^2\right)+O\left(a^2 \ell^2\left(\ell^2+a^2\right)\right) \\
=&-\frac{ \ell^2}{ 1 + k ^ { 2 } }\left(\frac{ \ell^2}{ 1 + k ^ { 2 } }-\frac{(b+1)+(7-2b)k^2}{1+k^2}\frac{(b+1)\left(1+k^2\right)^2}{12 \kappa k^2}a^2\right) \\
&+O\left(a^2 \ell^2\left(\ell^2+a^2\right)\right) \\
=&-\frac{ \ell^2}{ (1 + k ^ { 2 })^2 }\left[ \ell^2 - \ell^2_a + O\left(a^2\left(\ell^2+a^2\right)\right) \right],
\end{split}
\end{equation}
where $\ell_a^2$ is defined in \eqref{def l_a}. Thus when $\ell^2_a > 0$, then for $a$ sufficiently small there exists some
\[
\ell_*^2 := \ell^2_a + O(a^4) > 0
\]
such that when $\ell^2 > \ell_*^2$ then the two eigenvalues are purely imaginary, whereas the eigenvalues are real with opposite signs when $\ell^2<\ell_*^2$. On the other hand, when $\ell^2_a < 0$ then the spectrum of $\mathcal{A}_a(\ell)$ is purely imaginary for $a$ sufficiently small.
\end{proof}
We list the cases of $\ell_a^2>0$ and $\ell_a^2 < 0$ in the following lemma and the detail discussion is given in the Appendix \ref{app B}.
\begin{lemma}\label{lem4.4}
$\ell_a^2 >0$ is valid for the following cases:
\[
\textup{(1)} \ -1< b\leq\frac 7 2; \qquad \textup{(2)} \ b>\frac 7 2, \ k^2<\frac {b+1} {2b-7}; \qquad \textup{(3)} \ b<-1, \ k^2< \frac{b+1} {2b-7}.
\]
On the other hand, 
$\ell_a^2 < 0$ when
\[
\textup{(1)} \ b>\frac 7 2, \ k^2 > \frac {b+1} {2b-7}; \qquad \textup{(2)} \ b<-1, \ k^2 > \frac {b+1} {2b-7}. 
\]
\end{lemma}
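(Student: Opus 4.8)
The quantity $\ell_a^2$ in \eqref{def l_a} is a product of the scalar factor $\frac{(b+1)(1+k^2)^2}{12\kappa k^2}a^2$ and the bracket $(b+1)+(7-2b)k^2$. Since $\kappa, k^2 > 0$ and $(1+k^2)^2, a^2 \ge 0$, the plan is to observe that the first factor carries exactly the sign of $b+1$, so that
\[
\operatorname{sgn}(\ell_a^2) = \operatorname{sgn}\Big( (b+1)\big[(b+1)+(7-2b)k^2\big] \Big).
\]
Thus the whole problem reduces to determining the sign of the product $Q\cdot P$, where $Q := b+1$ and $P := (b+1)+(7-2b)k^2$, as a function of $(b,k^2)$ with $k^2 > 0$. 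I would organize the argument by the three regions $-1<b\le \tfrac72$, $b>\tfrac72$, and $b<-1$, and in each case read off the sign of $P$ and combine it with the sign of $Q$.

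For $-1<b\le\tfrac72$, both summands of $P$ are nonnegative, with the first strictly positive and $7-2b\ge0$, so $P>0$; since $Q=b+1>0$ as well, we get $QP>0$, hence $\ell_a^2>0$, which is case (1) of the positive list. For $b>\tfrac72$ we have $Q>0$ and $7-2b<0$, so $P=(b+1)-(2b-7)k^2$ changes sign as $k^2$ crosses the threshold $\frac{b+1}{2b-7}$; because $2b-7>0$ here, dividing preserves the inequality and $P>0\iff k^2<\frac{b+1}{2b-7}$. Hence $QP>0$ (so $\ell_a^2>0$) precisely when $k^2<\frac{b+1}{2b-7}$, and $QP<0$ (so $\ell_a^2<0$) when $k^2>\frac{b+1}{2b-7}$, giving case (2) of the positive list and case (1) of the negative list.

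For $b<-1$ we have $Q<0$ and $7-2b>0$, so $P=(b+1)+(7-2b)k^2$ is increasing in $k^2$ and vanishes at $k^2 = -\frac{b+1}{7-2b}=\frac{b+1}{2b-7}$, a quantity that is positive since both $b+1$ and $2b-7$ are negative. Thus $P>0\iff k^2>\frac{b+1}{2b-7}$, and combining with $Q<0$ yields $QP>0$ (so $\ell_a^2>0$) when $k^2<\frac{b+1}{2b-7}$ and $QP<0$ (so $\ell_a^2<0$) when $k^2>\frac{b+1}{2b-7}$, which are case (3) of the positive list and case (2) of the negative list. The only genuine subtlety, and hence the step to handle most carefully, is the bookkeeping when dividing by $2b-7$: its sign flips across $b=\tfrac72$, so the direction of the resulting inequality on $k^2$, together with the verification that the threshold $\frac{b+1}{2b-7}$ is indeed positive in each regime, must be checked region by region. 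Everything else is an elementary sign count.
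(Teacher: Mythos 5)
Your proof is correct and follows essentially the same route as the paper's: both factor $\ell_a^2$ into the sign-definite piece $\frac{(b+1)(1+k^2)^2}{12\kappa k^2}a^2$ times the bracket $(b+1)+(7-2b)k^2$ and then run an elementary case analysis on $b$ (the paper organizes by the sign of $b+1$, you by the three $b$-regions, which is the same bookkeeping), locating the threshold $k^2=\frac{b+1}{2b-7}$ and checking its positivity in each regime. The only implicit point, shared with the paper, is that $a\neq0$ is assumed so that the prefactor carries exactly the sign of $b+1$.
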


\subsection{$b$-KP-II equation}
Now let's turn to the case when $\sigma = 1$.  We begin by analyzing the spectra of the unperturbed operators $\mathcal{K}_{0}(\ell)$ and $\mathcal{A}_{0}(\ell)$.
\subsubsection{Spectrum of $\mathcal{K}_{0}(\ell)$}
Using Fourier series we find that the spectrum of the operator $\mathcal{K}_0(\ell)$ acting in $L_0^2(\mathbb{T})$ is given by (see also Figure \ref{fg1} (a))
\[
\operatorname{spec}_{L_0^2(\mathbb{T})}\left(\mathcal{K}_{0}(\ell)\right)
=\left\{ \kappa \frac {k^2(n^2-1)}{1+k^2}-\frac{\ell^2}{n^2}\;;\;n\in \mathbb{Z}^*\right\}.
\]

\begin{figure}[htbp]
\centering
\subfigure[]
{\begin{minipage}[t]{0.48\textwidth}
\centering
\includegraphics[width=3in]{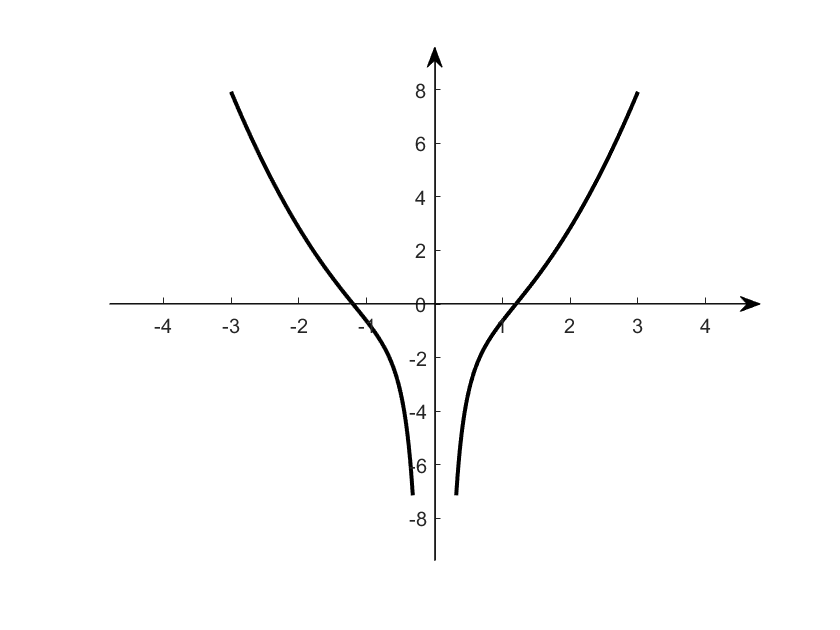}
\end{minipage}
}
\subfigure[]
{\begin{minipage}[t]{0.48\textwidth}
\centering
\includegraphics[width=3in]{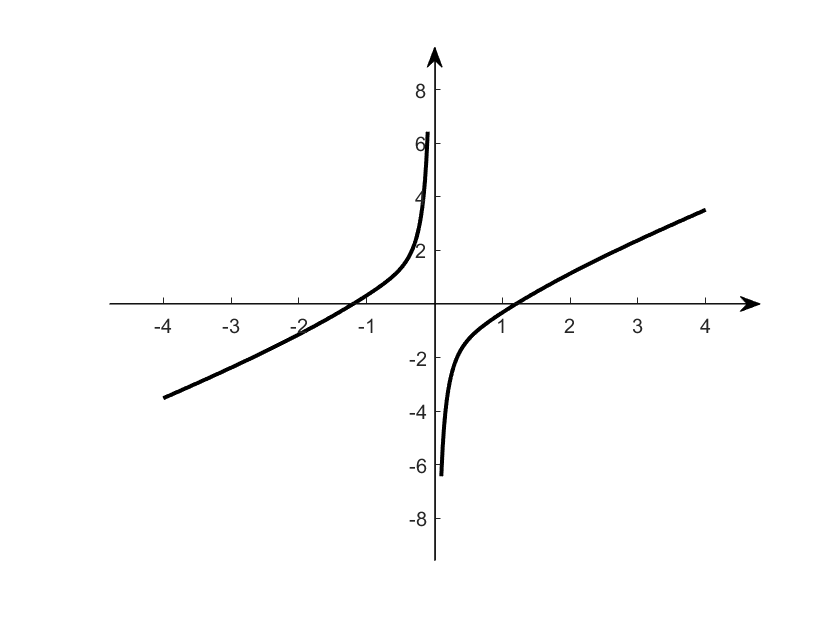}
\end{minipage}
}
\caption{[$b$-KP-II equation] (a) Graph of the map $n \mapsto \kappa \frac{k^2(n^2-1)}{1+k^2}- \frac{ \ell^2}{n ^ { 2 }}$ for $\kappa =2, k=1$ and $\ell=0.8$. The eigenvalues of $\mathcal{K}_0(\ell)$ are found by taking $n=q, q \in \mathbb{Z}^*$. (b) Graph of the dispersion relation $n \mapsto \nu(n) = n\left(\frac{\kappa}{1+k^2}-\frac{\kappa}{1+k^2 n^2}- \frac{ \ell^2}{n ^ { 2 } ( 1 + k ^ { 2 } n ^ { 2 })}\right)$ for $\kappa =2, k=1$ and $\ell=0.8$. The imaginary parts of the eigenvalues of $\mathcal{A}_0(\ell)$ are found by taking $k=n, n \in \mathbb{Z}^*$. Notice that the zeros of the two maps are the same. \label{fg1}}
\end{figure}
In contrast to the $b$-KP-I equation, the spectrum of $\mathcal{K}_0(\ell)$ contains negative eigenvalues, and the number of these eigenvalues increases with $\ell$.

\subsubsection{Spectrum of $\mathcal{A}_{0}(\ell)$}
The spectrum of the operator $\mathcal{A}_0(\ell)$ acting in $L_0^2(\mathbb{T})$ is given by (see also Figure \ref{fg1} (b))
\[
\operatorname{spec}_{L_0^2(\mathbb{T})}\left(\mathcal{A}_0(\ell)\right)
=\left\{i\nu_n(\ell)=in\left(\frac{\kappa}{1+k^2}-\frac{\kappa}{1+k^2 n^2}
-\frac{ \ell^2}{n ^ { 2 } ( 1 + k ^ { 2 } n ^ { 2 })}\right)\;:\;n\in \mathbb{Z}^*\right\}.
\]
Notice that the dispersion relation
\[
\nu(n) := n\left(\frac{\kappa}{1+k^2}-\frac{\kappa}{1+k^2 n^2}
-\frac{ \ell^2}{n ^ { 2 } ( 1 + k ^ { 2 } n ^ { 2 })}\right)
\]
is monotonically increasing on $(-\infty,-1]$ and $[1, \infty)$, so that colliding eigenvalues correspond to Fourier modes with opposite signs. A direct calculation then shows that for any $p, q \in \mathbb{N}^*$ the eigenvalues corresponding to the Fourier modes $p$ and $-q$ collide when
$$
\ell^2=\ell_{p, q}^2 := \kappa \frac{p q\left(1+k^2 p^2\right)\left(1+k^2 q^2\right)}{p\left(1+k^2 p^2\right)+q\left(1+k^2 q^2\right)}\left(\frac{p+q}{1+k^2}-\frac{p\left(1+k^2 q^2\right)+q\left(1+k^2 p^2\right)}{(1+k^2 p^2)( 1+k^2 q^2)}\right).
$$
Moreover, the corresponding eigenvalues of $\mathcal{K}_0(\ell)$ have opposite signs, so that any of these collisions may lead to unstable eigenvalues of the operator $\mathcal{A}_a(\ell)$.
\subsubsection{Long wavelength transverse perturbations}
\begin{lemma}\label{lem4.5}
Assume that $|\ell|$ and $|a|$ are sufficiently small. Recall the definition \eqref{def l_a} for $\ell_a^2$.
If $\ell^2_a < 0$, then there exists some
\[
\ell_{**}^2 := -\ell^2_a + O(a^4) > 0.
\]
\begin{enumerate}[label=\textup{(\roman*)}]
\item for any $\ell^2 > \ell_{**}^2$, the spectrum of $\mathcal{A}_a(\ell)$ is purely imaginary.
\item for any $\ell^2< \ell_{**}^2$, the spectrum of $\mathcal{A}_a(\ell)$ is purely imaginary, except for a pair of simple real eigenvalues with opposite signs.
\end{enumerate}
If $\ell_a^2 > 0$, then the spectrum of $\mathcal{A}_a(\ell)$ is purely imaginary.
\end{lemma}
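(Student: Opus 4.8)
The plan is to follow the proof of Theorem \ref{the4.1} essentially verbatim, exploiting the fact that the two cases $\sigma=\pm1$ differ only through the transverse term $\sigma\ell^2\p_z^{-2}$, and to read off how flipping $\sigma$ from $-1$ to $+1$ alters the final expression for $\lambda^2$. First I would invoke the spectral decomposition of Lemma \ref{lem4.2}, which is insensitive to the sign of $\sigma$: its proof rests on viewing $\mathcal{A}_a(\ell)$ as a perturbation of the $\sigma$-independent operator $\mathcal{A}_0(0)$, with $\|\widehat{\mathcal{A}}_a(\ell)\|_{H^1\to L^2}=O(\ell^2+|a|)$, so that for $|\ell|,|a|$ small the spectrum still splits into $\operatorname{spec}_0(\mathcal{A}_a(\ell))$ inside $B(0;r_*(2)/2)$ and $\operatorname{spec}_1(\mathcal{A}_a(\ell))$ outside $B(0;r_*(2))$, with a two-dimensional spectral subspace $\mathcal{X}_a(\ell)$ tracking the $n=\pm1$ Fourier modes.

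The genuinely new point, compared with the $b$-KP-I case, is the behaviour of $\operatorname{spec}_1$. For $\sigma=1$ the Krein signature is $K_n=\operatorname{sgn}\bigl(\kappa\frac{k^2(n^2-1)}{1+k^2}-\frac{\ell^2}{n^2}\bigr)$, which equals $-1$ for $n=\pm1$ but $+1$ for every $|n|\ge2$ once $\ell$ is small. Since the two modes $n=\pm1$ are precisely the ones captured in $\operatorname{spec}_0$, I would argue that the eigenvalues remaining in $\operatorname{spec}_1$ all carry the same positive signature for small $\ell$, so that any collision among them stays on the imaginary axis. To make this rigorous I must exclude, for small $\ell$, the dangerous collisions between the negative-signature modes $n=\pm1$ and the positive-signature modes $|n|\ge2$: by the symmetry $\ell_{p,q}^2=\ell_{q,p}^2$ these occur exactly at the thresholds $\ell^2=\ell_{1,q}^2$, and a short computation (for instance $\ell_{1,2}^2=\kappa\frac{4k^2}{1+3k^2}$, with $\ell_{1,q}^2\sim\kappa q\to\infty$) shows $\inf_{q\ge2}\ell_{1,q}^2>0$; hence for $\ell^2$ below this infimum no such collision is activated and $\operatorname{spec}_1(\mathcal{A}_a(\ell))\subset i\mathbb{R}$.

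For $\operatorname{spec}_0$ I would reuse the basis $\{\xi_a^0,\xi_a^1\}$ of $\mathcal{X}_a(\ell)$ and the matrix computation from the proof of Theorem \ref{the4.1}. The $a^2$-contribution, coming from $\mathcal{A}_a(0)$, is independent of $\sigma$ and hence unchanged; only the $O(\ell^2)$ off-diagonal entries, which originate from $\sigma\ell^2\p_z^{-2}$, reverse sign. The characteristic polynomial therefore differs from \eqref{4.14} solely by the replacement $\ell_a^2\mapsto-\ell_a^2$, giving
\[
\lambda^2=-\frac{\ell^2}{(1+k^2)^2}\left[\ell^2+\ell_a^2+O\!\left(a^2(\ell^2+a^2)\right)\right].
\]
If $\ell_a^2>0$ the bracket is positive for all small $\ell$, so $\lambda^2<0$ and the entire spectrum is purely imaginary; if $\ell_a^2<0$, setting $\ell_{**}^2:=-\ell_a^2+O(a^4)$ makes the bracket change sign at $\ell^2=\ell_{**}^2$, yielding purely imaginary eigenvalues for $\ell^2>\ell_{**}^2$ and a pair of simple real eigenvalues of opposite sign for $\ell^2<\ell_{**}^2$. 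This reproduces exactly the claimed dichotomy.

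I expect the control of $\operatorname{spec}_1$ to be the main obstacle. In contrast to $b$-KP-I, where the uniform Krein signature makes stability of $\operatorname{spec}_1$ automatic, here one must genuinely use the long-wavelength restriction both to confine the negative-signature modes to $\operatorname{spec}_0$ and to keep the mixed-signature collision thresholds $\ell_{1,q}^2$ bounded away from the origin. This is precisely the mechanism that fails as $\ell$ grows, consistent with the paper's observation that the full $\sigma=1$ problem may host infinitely many potentially unstable modes.
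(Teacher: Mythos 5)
Your proposal is correct and follows essentially the same route as the paper, which proves Lemma \ref{lem4.5} precisely by replacing $\ell^2$ with $-\ell^2$ in \eqref{4.14} (equivalently, flipping the sign of the $\sigma\ell^2\p_z^{-2}$ term) and then repeating the dichotomy argument of Theorem \ref{the4.1}. Your additional verification that $\operatorname{spec}_1(\mathcal{A}_a(\ell))$ stays on the imaginary axis for $\sigma=1$ --- noting that the negative-signature modes $n=\pm1$ are exactly those captured in $\operatorname{spec}_0$ and that the mixed-signature collision thresholds satisfy $\inf_{q\ge2}\ell_{1,q}^2>0$ (e.g.\ $\ell_{1,2}^2=\kappa\frac{4k^2}{1+3k^2}$) --- is a correct filling-in of a step the paper leaves implicit in its one-line proof.
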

\begin{proof} Upon replacing $\ell^2$ by $-\ell^2$ in \eqref{4.14}, we find that the two eigenvalues satisfy
$$
\lambda^2=-\frac{ \ell^2}{ (1 + k ^ { 2 })^2 }\left[ \ell^2 + \ell^2_a + O\left(a^2\left(\ell^2+a^2\right)\right) \right].
$$
Consequently, we may process as in Theorem \ref{the4.1} to get the results.
\end{proof}

The parameter regimes indicating the sign of $\ell_a^2$ is given in Lemma \ref{lem4.4}, which together with Lemma \ref{lem4.5} provides the spectral stability for the $b$-KP-II case.


\section{Non-periodic perturbations for $b$-KP-I: onset of instability}\label{sec nonper}
In this section, we will consider the two-dimensional perturbations which are non-periodic (localized
or bounded) in the direction of the propagation of the wave. For non-periodic perturbations, we study the invertibility of $\mathcal{T}_a(\lambda, \ell)$ in \eqref{3.4} acting in $L^2(\mathbb{R})$ or $C_{\textup{bdd}}(\mathbb{R})$ (with domain $H^{4}(\mathbb{R})$ or $C_{\textup{bdd}}^{4}(\mathbb{R})$ ), for $\lambda \in \mathbb{C}, \ \Re(\lambda)>0$, and $\ell \in \mathbb{R} \backslash\{0\}$. The notable difference in this case is that $\mathcal{T}_a(\lambda, \ell)$ now has bands of continuous spectrum.

\subsection{Reformulation and main result}
Since the coefficients of $\mathcal{T}_a(\lambda, \ell)$ are periodic functions, using Floquet theory, all solutions of $\mathcal{T}_a(\lambda, \ell) V = 0$ in $L^2(\mathbb{R})$ or $C_{\textup{bdd}}(\mathbb{R})$ are of the form $V(z)=\mathrm{e}^{i \xi z} \tilde{V}(z)$ where $\xi \in\left(-\frac{1}{2}, \frac{1}{2}\right]$ is the Floquet exponent and $\tilde{V}$ is a $2 \pi$-periodic function; see \cite{H1} for a similar situation. This replaces the study of invertibility of the operator $\mathcal{T}_a(\lambda, \ell)$ in $L^2(\mathbb{R})$ or $C_{\textup{bdd}}(\mathbb{R})$ by the study of invertibility of a family of Bloch operators in $L^2(\mathbb{T})$ parameterized by the Floquet exponent $\xi$. We present the precise reformulation in the following lemma.
\begin{lemma}\label{lem5.1}
The linear operator $\mathcal{T}_a(\lambda, \ell)$ is invertible in $L^2(\mathbb{R})$ if and only if the linear operators
\begin{equation*}
\begin{aligned}
&\mathcal{T}_{a, \xi}(\lambda, \ell)\nonumber\\
&\ =\left(1-k^2 \left(\partial_z+i \xi\right)^2\right) \left(\partial_z+i \xi\right)\left\{\lambda-c\left(\partial_z+i \xi\right)
+\left(1-k^2 \left(\partial_z+i \xi\right)^2\right)^{-1}\right.\nonumber\\
&\ \quad \left.\left(\partial_z+i \xi\right)\left[ \kappa +(b+1) w-k^2w_{z z}
-(b-1)k^2w_z\left(\partial_z+i \xi\right) -k^2w\left(\partial_z+i \xi\right)^2\right] \right\}-\sigma\ell^2
\end{aligned}
\end{equation*}
acting in $L^2(\mathbb{T})$ with domain $H^4_{\textup{per}}(\mathbb{T})$ are invertible for all $\xi \in\left(-\frac{1}{2}, \frac{1}{2}\right]$.
\end{lemma}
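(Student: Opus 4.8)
The proof rests on the standard Bloch--Floquet (Gelfand) decomposition, adapted to the non-local and non-self-adjoint operator $\mathcal{T}_a(\lambda,\ell)$. First I would introduce the Gelfand transform
\[
(\mathcal{G}V)(\xi,z)=\sum_{j\in\mathbb{Z}}V(z+2\pi j)\,\mathrm{e}^{-i\xi(z+2\pi j)},\qquad \xi\in\left(-\tfrac12,\tfrac12\right],
\]
which for each fixed $\xi$ produces a $2\pi$-periodic function of $z$, and which extends to an isometric isomorphism $\mathcal{G}:L^2(\mathbb{R})\to L^2\big((-\tfrac12,\tfrac12];L^2(\mathbb{T})\big)$ carrying $H^4(\mathbb{R})$ onto $L^2\big((-\tfrac12,\tfrac12];H^4(\mathbb{T})\big)$. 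Its inverse is $V(z)=\int_{-1/2}^{1/2}\mathrm{e}^{i\xi z}(\mathcal{G}V)(\xi,z)\,d\xi$, so that the Bloch waves $\mathrm{e}^{i\xi z}\tilde V(z)$ with $\tilde V$ periodic are exactly the building blocks of $L^2(\mathbb{R})$.

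The decisive algebraic step is the conjugation identity. Because the coefficients of $\mathcal{T}_a(\lambda,\ell)$ are $2\pi$-periodic and $\partial_z(\mathrm{e}^{i\xi z}\tilde V)=\mathrm{e}^{i\xi z}(\partial_z+i\xi)\tilde V$, substituting $V=\mathrm{e}^{i\xi z}\tilde V$ into \eqref{3.4} and extracting the common factor $\mathrm{e}^{i\xi z}$ gives
\[
\mathcal{T}_a(\lambda,\ell)\big(\mathrm{e}^{i\xi z}\tilde V\big)=\mathrm{e}^{i\xi z}\,\mathcal{T}_{a,\xi}(\lambda,\ell)\tilde V,
\]
with $\mathcal{T}_{a,\xi}$ obtained from $\mathcal{T}_a$ by the replacement $\partial_z\mapsto\partial_z+i\xi$, precisely the displayed operator. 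This $\mathcal{T}_{a,\xi}$ is well-defined on $H^4(\mathbb{T})$ since the Fourier symbol $1+k^2(n+\xi)^2$ of $1-k^2(\partial_z+i\xi)^2$ is strictly positive, so $(1-k^2(\partial_z+i\xi)^2)^{-1}:H^s(\mathbb{T})\to H^{s+2}(\mathbb{T})$ is an isomorphism. Moreover, since $1-k^2(\partial_z+i\xi)^2$ and $\partial_z+i\xi$ are constant-coefficient and hence commute, the outer factor cancels the inner inverse, leaving $\mathcal{T}_{a,\xi}$ a genuine fourth-order differential operator plus the constant $-\sigma\ell^2$, with top-order coefficient $k^2(c-w)$. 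Applying $\mathcal{G}$ termwise then yields $\mathcal{G}\,\mathcal{T}_a=\big(\int_{(-1/2,1/2]}^{\oplus}\mathcal{T}_{a,\xi}\,d\xi\big)\mathcal{G}$; that is, $\mathcal{T}_a$ is unitarily equivalent to the fibered operator.

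It remains to convert invertibility of the fibered operator into the fiberwise statement. By the classical criterion for decomposable operators, $\int^{\oplus}\mathcal{T}_{a,\xi}\,d\xi$ is boundedly invertible if and only if $\mathcal{T}_{a,\xi}$ is invertible for a.e.\ $\xi$ with uniformly bounded inverses $\sup_\xi\|\mathcal{T}_{a,\xi}^{-1}\|<\infty$. For small $|a|$ the top-order coefficient $k^2(c-w)$ is close to $k^2c_0>0$, hence nonvanishing, so each $\mathcal{T}_{a,\xi}$ is an elliptic fourth-order operator on the circle, thus Fredholm of index zero with compact resolvent; its non-invertibility is therefore equivalent to $0$ being an eigenvalue. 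Finally, $\xi\mapsto\mathcal{T}_{a,\xi}$ depends polynomially, hence analytically, on $\xi$ over the compact interval $[-\tfrac12,\tfrac12]$.

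The main obstacle, and the crux of the argument, is upgrading the almost-everywhere statement to the pointwise one. One cannot simply read off non-invertibility of $\mathcal{T}_a$ from a kernel element of some $\mathcal{T}_{a,\xi_0}$: the resulting Bloch wave $\mathrm{e}^{i\xi_0 z}\tilde V$ lies in $C_{\textup{bdd}}(\mathbb{R})$ but not in $L^2(\mathbb{R})$. Instead I would argue through the uniform bound. If every fiber is invertible, then by analyticity and compactness $\xi\mapsto\|\mathcal{T}_{a,\xi}^{-1}\|$ is continuous on $[-\tfrac12,\tfrac12]$, hence bounded, so the fibered operator, and therefore $\mathcal{T}_a$, is invertible. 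Conversely, if $\mathcal{T}_{a,\xi_0}$ fails to be invertible for some $\xi_0$, then $0\in\operatorname{spec}(\mathcal{T}_{a,\xi_0})$ and, by continuity of the discrete spectrum in $\xi$, $\|\mathcal{T}_{a,\xi}^{-1}\|\to\infty$ as $\xi\to\xi_0$, so the uniform bound fails and $\mathcal{T}_a$ is not invertible. This continuity-plus-compactness mechanism is exactly what the quoted reference \cite{H1} supplies, and it completes the equivalence.
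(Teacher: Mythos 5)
Your proposal is correct and follows exactly the route the paper takes: the paper proves Lemma \ref{lem5.1} by citation to \cite[Proposition 1.1]{H1}, which is precisely the standard Gelfand/Bloch direct-integral decomposition you write out, with the conjugation identity $\mathcal{T}_a(\lambda,\ell)(\mathrm{e}^{i\xi z}\tilde V)=\mathrm{e}^{i\xi z}\mathcal{T}_{a,\xi}(\lambda,\ell)\tilde V$ and the fiberwise invertibility criterion. You also correctly supply the one genuinely delicate point the citation hides --- upgrading a.e.\ fiber invertibility to the pointwise statement via continuity of $\xi\mapsto\mathcal{T}_{a,\xi}$, Fredholmness of the elliptic fibers (top coefficient $k^2(c-w)\ne 0$ for small $|a|$), and the uniform bound on $\|\mathcal{T}_{a,\xi}^{-1}\|$ over the compact interval --- so nothing further is needed.
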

We refer to \cite[Proposition 1.1]{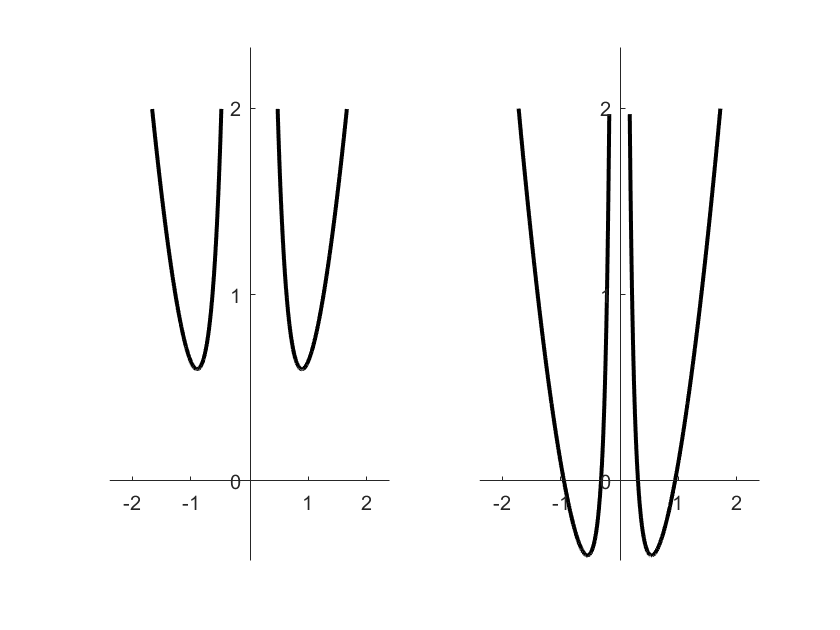} for a detailed proof in the similar situation. The fact that the operators $\mathcal{T}_{a, \xi}(\lambda, \ell)$ act in $L^2(\mathbb{T})$ with compactly embedded domain $H^4_{\textup{per}}(\mathbb{T})$ implies that these operators have only point spectrum. Noting that $\xi=0$ corresponds to the periodic perturbations which we have already investigated, we would restrict ourselves to the case of $\xi \neq 0$. Thus the operator $\partial_z+i \xi$ is invertible in $L^2(\mathbb{T})$. Using this, we have the following result.
\begin{lemma}\label{lem5.2}
The operator $\mathcal{T}_{a, \xi}(\lambda, \ell)$ is not invertible in $L^2(\mathbb{T})$ for some $\lambda \in \mathbb{C}$ and $\xi \neq 0$ if and only if $\lambda \in \operatorname{spec}_{L^2(\mathbb{T})}\left(\mathcal{A}_a(\ell, \xi)\right)$, where
\begin{equation*}
\begin{aligned}
&\mathcal{A}_a(\ell, \xi)\nonumber\\
=&\left(\partial_z+i \xi\right) \left[c-\left(1-k^2 \left(\partial_z+i \xi\right)^2\right)^{-1}\right.\nonumber\\
&\left.\left( \kappa +(b+1) w-k^2w_{z z}
-k^2(b-1)w_z\left(\partial_z+i \xi\right) -k^2w\left(\partial_z+i \xi\right)^2-\sigma\ell^2 \left(\partial_z+i \xi\right)^{-2}\right)\right]\nonumber\\
=&\left(\partial_z+i \xi\right)\left[ 1-k^2\left(\partial_z+i \xi\right)^2\right]^{-1} \Big[ c\left(1-(\partial_z+i \xi)^2\right) - \nonumber\\
&\left.\left( \kappa +(b+1) w-k^2w_{z z}
-k^2(b-1)w_z\left(\partial_z+i \xi\right)
-k^2w\left(\partial_z+i \xi\right)^2-\sigma\ell^2 \left(\partial_z+i \xi\right)^{-2}\right)\right].
\end{aligned}
\end{equation*}
\end{lemma}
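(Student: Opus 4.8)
The plan is to follow the strategy already used for the periodic case in Proposition \ref{pro4.1} (and ultimately Haragus's argument), now adapted to the Bloch operators with $\xi \neq 0$. Write $\mathcal{D} := \p_z + i\xi$ for brevity. The first thing I would record is that, precisely because $\xi \neq 0$, both $\mathcal{D}$ and $1 - k^2\mathcal{D}^2$ are invertible on $L^2(\mathbb{T})$: on the Fourier mode $e^{inz}$ they act by multiplication by $i(n+\xi)$ and $1 + k^2(n+\xi)^2$ respectively, and since $\xi \in (-\tfrac12,\tfrac12]\setminus\{0\}$ we have $n + \xi \neq 0$ for every $n \in \Z$. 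In particular $\mathcal{D}^{-1}$ and $(1-k^2\mathcal{D}^2)^{-1}$ are bounded (indeed smoothing) operators, so the nonlocal term $\mathcal{D}^{-2}$ appearing in $\mathcal{A}_a(\ell,\xi)$ is well defined. The goal is then to exhibit $\mathcal{T}_{a,\xi}(\lambda,\ell)$ as an invertible operator composed with $\lambda I - \mathcal{A}_a(\ell,\xi)$, after which the claimed equivalence is immediate.

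The key algebraic step is to absorb the stand-alone term $-\sigma\ell^2$ into the braces defining $\mathcal{T}_{a,\xi}$. Since $\mathcal{D}$ and $1-k^2\mathcal{D}^2$ commute (both are functions of $\mathcal{D}$), I would use the identity
\[
-\sigma\ell^2 = (1-k^2\mathcal{D}^2)\,\mathcal{D}\,(1-k^2\mathcal{D}^2)^{-1}\mathcal{D}\,(-\sigma\ell^2\mathcal{D}^{-2}),
\]
whose middle factors collapse via $\mathcal{D}^2\mathcal{D}^{-2} = I$. Factoring $(1-k^2\mathcal{D}^2)\mathcal{D}$ out on the left of the whole expression for $\mathcal{T}_{a,\xi}$, this identity contributes exactly the term $-\sigma\ell^2\mathcal{D}^{-2}$ inside the square bracket, so that
\[
\mathcal{T}_{a,\xi}(\lambda,\ell) = (1-k^2\mathcal{D}^2)\,\mathcal{D}\,\bigl(\lambda I - \mathcal{A}_a(\ell,\xi)\bigr),
\]
where, pulling $\mathcal{D}$ to the left through the commuting resolvent $(1-k^2\mathcal{D}^2)^{-1}$, the operator $\mathcal{A}_a(\ell,\xi)$ is precisely the one in the statement. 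This is where care is required: the multiplication operators $w, w_z, w_{zz}$ do \emph{not} commute with $\mathcal{D}$, so the factoring must keep $(1-k^2\mathcal{D}^2)^{-1}\mathcal{D}$ acting on the entire variable-coefficient bracket, matching the displayed ordering.

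With the factorization in hand the equivalence follows. Both $\mathcal{T}_{a,\xi}(\lambda,\ell)$ and $\mathcal{A}_a(\ell,\xi)$ have compactly embedded domains, hence compact resolvent and pure point spectrum, so non-invertibility is equivalent to a nontrivial kernel. If $\mathcal{T}_{a,\xi}(\lambda,\ell)V=0$ with $V\in H^4(\mathbb{T})\setminus\{0\}$, then injectivity of the invertible prefactor $(1-k^2\mathcal{D}^2)\mathcal{D}$ forces $(\lambda I - \mathcal{A}_a(\ell,\xi))V=0$, i.e.\ $\lambda\in\operatorname{spec}_{L^2(\mathbb{T})}(\mathcal{A}_a(\ell,\xi))$. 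Conversely, an eigenfunction $V\in H^1(\mathbb{T})$ of $\mathcal{A}_a(\ell,\xi)$ must be upgraded to the domain $H^4(\mathbb{T})$ of $\mathcal{T}_{a,\xi}$: writing $\mathcal{A}_a(\ell,\xi)=\mathcal{D}\,B$ with $B := c - (1-k^2\mathcal{D}^2)^{-1}(\kappa + (b+1)w - k^2 w_{zz} - k^2(b-1)w_z\mathcal{D} - k^2 w\mathcal{D}^2 - \sigma\ell^2\mathcal{D}^{-2})$, I would observe that $B$ is an elliptic operator of order zero whose leading symbol equals $c - w$, which is nonzero for $a$ small since $c = c_0 + O(a^2)$ and $w = O(a)$. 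The relation $BV = \lambda\mathcal{D}^{-1}V$ together with elliptic regularity for $B$ then bootstraps $V\in H^1 \Rightarrow V\in H^2 \Rightarrow \cdots \Rightarrow V \in H^4$ (in fact $V$ is smooth), so $V$ lies in the domain of $\mathcal{T}_{a,\xi}$ and $\mathcal{T}_{a,\xi}(\lambda,\ell)V=0$.

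I expect the main obstacle to be the bookkeeping in the algebraic factorization — correctly inserting the nonlocal $\mathcal{D}^{-2}$ term while respecting the non-commutativity of the multiplication operators with $\mathcal{D}$ — together with the regularity bootstrap that reconciles the different natural domains ($H^4$ for $\mathcal{T}_{a,\xi}$ versus $H^1$ for $\mathcal{A}_a$). The invertibility of $\mathcal{D}$ and $1-k^2\mathcal{D}^2$, which drives everything, is by contrast elementary once $\xi\neq 0$.
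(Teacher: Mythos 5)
Your proposal is correct and takes essentially the same route as the paper, whose (largely implicit) proof rests exactly on the observation that for $\xi \neq 0$ both $\partial_z + i\xi$ and $1 - k^2(\partial_z + i\xi)^2$ are invertible Fourier multipliers on $L^2(\mathbb{T})$, yielding the factorization $\mathcal{T}_{a,\xi}(\lambda,\ell) = \bigl(1 - k^2(\partial_z + i\xi)^2\bigr)(\partial_z + i\xi)\bigl(\lambda - \mathcal{A}_a(\ell,\xi)\bigr)$ as in Proposition \ref{pro4.1} and the analogous results of Haragus. Your factorization algebra checks out (including the treatment of the $-\sigma\ell^2$ term and the non-commuting multiplication operators), and the elliptic bootstrap reconciling the domains $H^4(\mathbb{T})$ and $H^1(\mathbb{T})$ supplies correctly a detail the paper leaves unstated.
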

Note that the operator $\left(\partial_z+i \xi\right)^{-1}$ becomes singular as $\xi \rightarrow 0$. Thus the implication from the spectral information of $\mathcal{A}_a(\ell, \xi)$ to the invertibility of $\mathcal{T}_{a, \xi}(\lambda, \ell)$ is not uniform in $\xi$. Therefore we will restrict our attention to the case when $|\xi|>\epsilon>0$, and look to detect the onset of instability.


\begin{lemma}\label{lem-symmetry}
Assume that $\xi \in\left(-\frac{1}{2}, \frac{1}{2}\right]$ and $\xi\neq0$. Then the spectrum $\operatorname{spec}_{L^2(\mathbb{T})}\left(\mathcal{A}_a(\ell, \xi)\right)$ is symmetric with respect to the imaginary axis, and $\operatorname{spec}_{L^2(\mathbb{T})}\left(\mathcal{A}_a(\ell, \xi)\right)= \operatorname{spec}_{L^2(\mathbb{T})}\left(-\mathcal{A}_a(\ell,-\xi)\right)$.
\end{lemma}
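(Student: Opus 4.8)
The plan is to derive both symmetries from two operator conjugation identities, one by an anti-linear involution and one by a linear reflection. Throughout I would use the facts, recorded in Lemma~\ref{le1}, that the profile $w$ is real-valued and even in $z$, so that $w$ and $w_{zz}$ are real and even while $w_z$ is real and odd. I also note that the shorthand $D_\xi := \p_z + i\xi$ is invertible on $L^2(\mathbb{T})$ precisely because $\xi \neq 0$ (its Fourier symbol $i(n+\xi)$ never vanishes), which is exactly what makes $D_\xi^{-1}$, $D_\xi^{-2}$ and $(1-k^2 D_\xi^2)^{-1}$ well-defined bounded operators, so that the formula for $\mathcal{A}_a(\ell,\xi)$ makes sense.

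First I would introduce the maps
\[
\mathcal{S}f(z) := \overline{f(-z)}, \qquad \mathcal{R}f(z) := f(-z),
\]
both bijective isometries of $L^2(\mathbb{T})$ preserving the domain $H^4(\mathbb{T})$, with $\mathcal{S}$ anti-linear, $\mathcal{R}$ linear, and $\mathcal{S}^2 = \mathcal{R}^2 = \mathrm{id}$. The crucial elementary computation is how each interacts with $D_\xi$: a direct differentiation gives
\[
\mathcal{S}D_\xi = -D_\xi\,\mathcal{S}, \qquad \mathcal{R}D_\xi = -D_{-\xi}\,\mathcal{R},
\]
where $D_{-\xi} = \p_z - i\xi$. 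Combined with the parity and reality of $w$ — which yield $\mathcal{S}(w\,\cdot) = (w\,\cdot)\mathcal{S}$, $\mathcal{S}(w_{zz}\,\cdot)=(w_{zz}\,\cdot)\mathcal{S}$, and $\mathcal{S}(w_z\,\cdot) = -(w_z\,\cdot)\mathcal{S}$, together with the analogous identities for $\mathcal{R}$ — these relations determine the conjugate of every building block of $\mathcal{A}_a(\ell,\xi)$, the inverse operators included, since conjugation commutes with inversion (so $D_\xi^{-2}$ is fixed by $\mathcal{S}$ as an even power, and $(1-k^2D_\xi^2)^{-1}$ is likewise fixed).

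The second step is to assemble these into identities for the full operator. Writing $\mathcal{A}_a(\ell,\xi) = D_\xi(1-k^2 D_\xi^2)^{-1}\mathcal{K}_\xi$ with $\mathcal{K}_\xi$ the remaining bracketed factor, one checks that $\mathcal{K}_\xi$ is fixed under conjugation by $\mathcal{S}$: the even real coefficients and constants are preserved, the term $w_z D_\xi$ picks up the two canceling sign changes $(-w_z)(-D_\xi)$, and the even powers $D_\xi^2, D_\xi^{-2}$ are preserved. Since only the single leading factor $D_\xi$ flips sign, this produces
\[
\mathcal{S}\,\mathcal{A}_a(\ell,\xi)\,\mathcal{S}^{-1} = -\mathcal{A}_a(\ell,\xi).
\]
The identical bookkeeping with $\mathcal{R}$ converts every $D_\xi$ into $D_{-\xi}$, turning $\mathcal{K}_\xi$ into $\mathcal{K}_{-\xi}$ and giving
\[
\mathcal{R}\,\mathcal{A}_a(\ell,\xi)\,\mathcal{R}^{-1} = -\mathcal{A}_a(\ell,-\xi).
\]

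Finally I would read off the spectral statements. Since $\mathcal{S}$ is anti-linear, conjugation by it sends the point spectrum to its complex conjugate, so the first identity yields $-\operatorname{spec}_{L^2(\mathbb{T})}(\mathcal{A}_a(\ell,\xi)) = \overline{\operatorname{spec}_{L^2(\mathbb{T})}(\mathcal{A}_a(\ell,\xi))}$, i.e. invariance under $\lambda \mapsto -\overline{\lambda}$, which is exactly reflection across the imaginary axis. Since $\mathcal{R}$ is a linear similarity, the second identity yields $\operatorname{spec}_{L^2(\mathbb{T})}(\mathcal{A}_a(\ell,\xi)) = \operatorname{spec}_{L^2(\mathbb{T})}(-\mathcal{A}_a(\ell,-\xi))$ directly. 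The main difficulty is not conceptual but careful bookkeeping: one must correctly track the transformation of the non-local factors $(1-k^2D_\xi^2)^{-1}$ and $D_\xi^{-2}$ and verify that the sign contributions from the leading $D_\xi$, from the odd coefficient $w_z$, and from the anti-linearity of $\mathcal{S}$ combine as claimed. It is precisely here that the hypothesis $\xi \neq 0$ is used, since it guarantees the inverse operators exist and that $\mathcal{S}$ and $\mathcal{R}$ map the domain of $\mathcal{A}_a(\ell,\xi)$ onto itself.
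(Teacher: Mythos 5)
Your proposal is correct and follows essentially the same route as the paper: the paper's proof uses exactly the same anti-linear involution $\mathcal{S}\psi(z)=\overline{\psi(-z)}$ to obtain the anti-commutation $\mathcal{S}\mathcal{A}_a(\ell,\xi)=-\mathcal{A}_a(\ell,\xi)\mathcal{S}$ (whence $\mu\in\operatorname{spec}$ implies $-\overline{\mu}\in\operatorname{spec}$) and the same linear reflection $\mathcal{R}\psi(z)=\psi(-z)$ to obtain $\mathcal{R}\mathcal{A}_a(\ell,\xi)=-\mathcal{A}_a(\ell,-\xi)\mathcal{R}$, relying on the evenness of $w$ just as you do. Your version merely makes the bookkeeping explicit by factoring through the identities $\mathcal{S}D_\xi=-D_\xi\mathcal{S}$ and $\mathcal{R}D_\xi=-D_{-\xi}\mathcal{R}$ for $D_\xi=\partial_z+i\xi$, which is a fleshed-out form of the paper's one-line computation rather than a different argument.
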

\begin{proof} We consider $\mathcal{S}$ to be the reflection through the imaginary axis defined as follows
\[
\mathcal{S}\psi(z)=\overline{\psi(-z)},
\]
and notice that $\mathcal{A}_a(\ell, \xi)$ anti-commutes with $\mathcal{S}$,
\[
\left(\mathcal{A}_a(\ell, \xi)\mathcal{S}\psi\right)(z)=\mathcal{A}_a(\ell, \xi)\left(\overline{\psi(-z)}\right)
=-\overline{\left(\mathcal{A}_a(\ell, \xi)\psi\right)}(-z)=-\left(\mathcal{S}\mathcal{A}_a(\ell, \xi)\right)\psi(z),
\]
where we have used the fact that $w$ is an even function. Assume $\mu$ is the eigenvalue of $\mathcal{A}_a(\ell, \xi)$ with an associated eigenvector $\varphi$,
\[
\mathcal{A}_a(\ell, \xi)\varphi=\mu\varphi.
\]
then we have
\[
\mathcal{A}_a(\ell, \xi)\mathcal{S}\varphi=-\mathcal{S}\mathcal{A}_a(\ell, \xi)\varphi=-\overline{\mu}\mathcal{S}\varphi.
\]
Consequently, $-\overline{\mu}$ is an eigenvalue of $\mathcal{A}_a(\ell, \xi)$. This implies that the spectrum of $\mathcal{A}_a(\ell, \xi)$ is symmetric with respect to imaginary axis.

Consider $\mathcal{R}$ to be the reflection as follows
\[
\mathcal{R}\psi(z)=\psi(-z),
\]
then we have
\[
\left(\mathcal{A}_a(\ell, \xi)\mathcal{R}\right)\psi(z)=\mathcal{A}_a(\ell, \xi)\left(\psi(-z)\right)=-\left(\mathcal{A}_a(\ell, -\xi)\psi\right)(-z)
=-\left(\mathcal{R}\mathcal{A}_a(\ell, -\xi)\psi\right)(z).
\]
This gives the second property.
\end{proof}

From the above lemma, we can without loss of generality assume that $\xi \in\left(0, \frac{1}{2}\right]$. We will study the $L^2(\mathbb{T})$-spectra of the linear operators $\mathcal{A}_a(\ell, \xi)$ for $|a|$ sufficiently small. It is straightforward to establish the estimate
$$
\left\|\mathcal{A}_a(\ell, \xi)-\mathcal{A}_0(\ell, \xi)\right\|_{H^1(\mathbb{T})\rightarrow L^2(\mathbb{T})}=O(|a|)
$$
as $a \rightarrow 0$ uniformly for $\xi \in\left(0, \frac{1}{2}\right]$ in the operator norm. Therefore, in order to locate the spectrum of $\mathcal{A}_a(\ell, \xi)$, we need to determine the spectrum of $\mathcal{A}_0(\ell, \xi)$. A simple calculation yields that
\begin{subequations}\label{ev A nonper}
\begin{equation}\label{ev A FS}
\mathcal{A}_0(\ell, \xi) \mathrm{e}^{i n z}=i \omega_n (\ell, \xi) \mathrm{e}^{i n z}, \quad n \in \mathbb{Z},
\end{equation}
where
\begin{equation}\label{ev A FS r}
\omega_{n}(\ell, \xi) =(n+\xi)\left(\frac{\kappa}{1+k^2}-\frac{\kappa}{1+k^2 (n+\xi)^2}-\frac{\sigma\ell^2}{(n+\xi)^2\left(1+k^2 (n+\xi)^2\right)}\right).
\end{equation}
\end{subequations}

As in the previous section, the linear operator $\mathcal{A}_0(\ell, \xi)$ can be decomposed as
$$
\mathcal{A}_0(\ell,\xi)=J_{\xi} \mathcal{K}_0(\ell, \xi),
$$
where
$$
J_{\xi} := \left(\partial_z+i \xi\right)\left(1-k^2\left(\partial_z+i \xi\right)^2\right)^{-1},
$$
and
\begin{equation*}
\begin{aligned}
\mathcal{K}_0(\ell, \xi)
:= &\left(c_0\left(1-(\partial_z+i \xi)^2\right)- \kappa +\sigma\ell^2 \left(\partial_z+i \xi\right)^{-2}\right).
\end{aligned}
\end{equation*}
The operator $J_{\xi}$ is skew-adjoint, whereas the operator $\mathcal{K}_0(\ell, \xi)$ is self-adjoint. As defined in \eqref{3.16}, the Krein signature, $K_{n, \xi}$ of an eigenvalue $i \omega_n(\ell, \xi)$ in $\operatorname{spec}\left(\mathcal{A}_0(\ell, \xi)\right)$ is
$$
K_{n, \xi}=\operatorname{sgn}\left(\mu_n(\ell, \xi) := \frac { \kappa k^2\left((n+\xi)^2-1\right)}{1+k^2}-\frac{\sigma\ell^2}{(n+\xi)^2}\right)
$$
for $n \in \mathbb{Z}$. Note that we have
\begin{equation}\label{omega mu}
\omega_{n}(\ell, \xi) = \frac{n + \xi}{1 + k^2(n+\xi)^2} \mu_{n}(\ell, \xi).
\end{equation}
As explained in the previous section, the $b$-KP-II case is very difficult to analyze, and hence we will mainly focus on the $b$-KP-I equation ($\sigma=-1$).

The main result of this section is the following theorem showing the finite-wavelength transverse spectral instability of the periodic waves for the $b$-KP-I equation under perturbations which are non-periodic in $z$.
\begin{theorem}[Instability under finite-wavelength transverse perturbation] \label{the5.1}
Consider $\sigma = -1$. Assume that $\xi \in\left(0, \frac{1}{2}\right]$ and define
\begin{equation}\label{def l_c}
\begin{split}
\ell_0^2 & :=\frac{ \kappa k^2(1-\xi^2)\xi^2}{\left(1+k^2\right)}, \\
\ell_c^2 & := \frac{\kappa k^2(1-\xi)^2\xi^2}{\left(1+k^2\right)}\cdot\frac{\left(1+\xi\right)\left[ 1+k^2(1-\xi)^2\right]
+\left(1+k^2\xi^2\right)\left(2-\xi\right)}{\left(1-\xi\right)\left[ 1+k^2(1-\xi)^2\right]
+\left(1+k^2\xi^2\right)\xi},
\end{split}
\end{equation}
and
\begin{equation}\label{def B}
B := \left[ k^2\xi^2+(1-b)k^2\xi+k^2+(b+1)\right]
\left[ k^2\xi^2+(b-3)k^2\xi+(3-b)k^2+(b+1)\right].
\end{equation}
Therefore we know that $\ell_0^2 {\leq} \ell_c^2$. {Then for $\ell^2 \ge \ell_0^2$, we have}
\begin{enumerate}[label=\textup{(\Roman*)}]
\item\label{B>0}{In the case of $B>0$, for any $|a|$ sufficiently small, there exists $\varepsilon_a(\xi)>0$ with
\begin{equation}\label{def epsilon_a}
\varepsilon_a(\xi) :=\xi^{\frac 3 2}(1-\xi)^{\frac 3 2}\frac{(1+k^2\xi^2)^\frac{1} {2}\left(1+k^2(1-\xi)^2\right)^\frac{1} {2}}{(1-\xi)\left[1+k^2(1-\xi)^2\right]+\xi(1+k^2\xi^2)}B^{\frac 1 2} |a|
\end{equation}
such that
\begin{enumerate}[label=\textup{(\roman*)}]
\item\label{bdd stab finite} for $\left|\ell^2-\ell_c^2(\xi)\right| > \varepsilon_a(\xi)$, the spectrum of $\mathcal{A}_a(\ell, \xi)$ is purely imaginary;
\item for $\left|\ell^2-\ell_c^2(\xi)\right|<\varepsilon_a(\xi)$, the spectrum of $\mathcal{A}_a(\ell, \xi)$ is purely imaginary, except for a pair of complex eigenvalues with opposite nonzero real parts.
\end{enumerate}
}
\item\label{B<0}{In the case when $B < 0$, the spectrum of $\mathcal{A}_a(\ell, \xi)$ is purely imaginary.}
\end{enumerate}
\end{theorem}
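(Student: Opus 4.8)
The plan is to adapt the perturbation scheme of Theorem~\ref{the4.1} to the present Bloch setting, localizing the analysis at the collision of the two Fourier modes of $\mathcal{A}_0(\ell,\xi)$ whose shifted frequencies $n+\xi$ are closest to the origin. For $\xi\in(0,\tfrac12]$ these are $\xi$ (mode $n=0$) and $\xi-1$ (mode $n=-1$), and a direct computation from \eqref{ev A FS r} shows that the eigenvalues $i\omega_0(\ell,\xi)$ and $i\omega_{-1}(\ell,\xi)$ coincide precisely when $\ell^2=\ell_c^2$; along the way one records the inequality $\ell_0^2\le\ell_c^2$. The first step is to identify this collision and to examine the Krein signatures $K_{n,\xi}=\operatorname{sgn}(\mu_n(\ell,\xi))$. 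Since $\mu_n>0$ whenever $|n+\xi|>1$ (in particular for every $n\notin\{0,-1\}$) and $\ell_0^2$ is exactly where $\mu_0$ changes sign, in the regime $\ell^2\ge\ell_0^2$ the mode $n=-1$ is the \emph{only} one that can carry negative signature, so by Lemma~\ref{lem-symmetry} any escape of eigenvalues from the imaginary axis must come from a collision involving $n=-1$ with an opposite-signature mode; the relevant finite-wavelength one is the collision with $n=0$ at $\ell_c^2$, and the remaining equal-signature collisions leave the rest of the spectrum imaginary exactly as in Lemma~\ref{lem4.3}.

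Next I would isolate the two colliding eigenvalues as in Lemma~\ref{lem4.2}. Because $\|\mathcal{A}_a(\ell,\xi)-\mathcal{A}_0(\ell,\xi)\|_{H^1\to L^2}=O(|a|)$ uniformly in $\xi$, a Riesz projection $\Pi_a(\ell,\xi)$ associated with the pair produces a two–dimensional invariant subspace $\mathcal{X}_a(\ell,\xi)$ that stays $O(|a|)$-close to $\operatorname{span}\{e^{0},e^{-iz}\}$. Restricting $\mathcal{A}_a(\ell,\xi)$ to a suitable basis of $\mathcal{X}_a(\ell,\xi)$ yields a $2\times2$ matrix
\[
M_a(\ell,\xi)=\begin{pmatrix} i\omega_0(\ell,\xi)+O(a^2) & m_{12}\\ m_{21} & i\omega_{-1}(\ell,\xi)+O(a^2)\end{pmatrix},\qquad m_{12},m_{21}=O(|a|),
\]
whose off-diagonal entries arise from the leading profile $w\sim a\cos z=\tfrac{a}{2}(e^{iz}+e^{-iz})$ in \eqref{2.3}, which couples adjacent Fourier modes; the diagonal receives no $O(a)$ correction because $\cos z$ shifts the mode index. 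Concretely, $m_{12}$ and $m_{21}$ are read off from the action of the coupling part of $\mathcal{A}_a(\ell,\xi)$ — the terms $(b+1)w$, $k^2w_{zz}$, $k^2(b-1)w_z(\partial_z+i\xi)$ and $k^2w(\partial_z+i\xi)^2$, composed with $J_\xi=(\partial_z+i\xi)(1-k^2(\partial_z+i\xi)^2)^{-1}$ — on $e^{0}$ and $e^{-iz}$, projected onto the opposite mode.

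The heart of the argument is to evaluate the product to leading order and to show
\[
m_{12}m_{21}=\gamma(\xi)\,B\,a^2+O(a^3),\qquad \gamma(\xi)>0,
\]
i.e.\ that the two coupling coefficients factor exactly into the two brackets of $B$ in \eqref{def B}, the positive prefactor $\gamma(\xi)$ being assembled from the $J_\xi$-eigenvalue factors $\tfrac{n+\xi}{1+k^2(n+\xi)^2}$ at $n=0,-1$. Granting this, the characteristic polynomial of $M_a(\ell,\xi)$ gives
\[
\lambda=\tfrac{i}{2}\bigl(\omega_0+\omega_{-1}\bigr)\pm\tfrac12\sqrt{-\bigl(\omega_0-\omega_{-1}\bigr)^2+4m_{12}m_{21}},
\]
so a nonzero real part appears precisely when $4m_{12}m_{21}>(\omega_0-\omega_{-1})^2$. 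Since $\omega_0-\omega_{-1}$ vanishes linearly at $\ell^2=\ell_c^2$, with slope $\partial_{\ell^2}(\omega_0-\omega_{-1})=\tfrac{1}{\xi(1+k^2\xi^2)}+\tfrac{1}{(1-\xi)(1+k^2(1-\xi)^2)}$ whose numerator is exactly the denominator of \eqref{def epsilon_a}, expanding $\omega_0-\omega_{-1}$ near $\ell_c^2$ turns the inequality into $|\ell^2-\ell_c^2|<\varepsilon_a(\xi)$ with $\varepsilon_a\propto B^{1/2}|a|$ reproducing \eqref{def epsilon_a}. When $B>0$ this gives case~\ref{B>0}: instability for $|\ell^2-\ell_c^2|<\varepsilon_a$ and purely imaginary spectrum outside; when $B<0$ the product $m_{12}m_{21}$ is negative, the radicand is strictly negative for all $\ell$, and the spectrum remains imaginary, giving case~\ref{B<0}.

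I expect the main obstacle to be the explicit computation of $m_{12}m_{21}$. One must carry the expansion of both $\Pi_a(\ell,\xi)$ and the basis of $\mathcal{X}_a(\ell,\xi)$ to first order in $a$, track how the quasilinear coefficients — especially the second-order term $k^2w(\partial_z+i\xi)^2$ — act on $e^{0}$ and $e^{-iz}$ after inverting $1-k^2(\partial_z+i\xi)^2$, and then verify that the resulting product collapses to the clean factored form $\gamma(\xi)\,B\,a^2$. Matching the constant $\gamma(\xi)$ so that the half-width reproduces \eqref{def epsilon_a} is the most delicate bookkeeping, since several $\xi$-dependent denominators coming from $J_\xi$ and from the resolvent must combine correctly.
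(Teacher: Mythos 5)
Your scheme coincides with the paper's own proof (Lemmas \ref{lem5.3}--\ref{lem5.6}): Krein-signature bookkeeping for $\sigma=-1$, isolation of the colliding pair $i\omega_0(\ell,\xi)$, $i\omega_{-1}(\ell,\xi)$ by a Riesz projection, reduction to a $2\times2$ matrix, and a discriminant analysis near $\ell^2=\ell_c^2$. Your quantitative scaffolding is also correct: the paper's matrix $M_a(\ell_c,\xi)$ has off-diagonal entries $-\frac{i}{2}\frac{(\xi-1)\left[k^2\xi^2+(1-b)k^2\xi+k^2+(b+1)\right]}{1+k^2(\xi-1)^2}\,a$ and $-\frac{i}{2}\frac{\xi\left[k^2\xi^2+(b-3)k^2\xi+(3-b)k^2+(b+1)\right]}{1+k^2\xi^2}\,a$, whose product is $\frac{\xi(1-\xi)\,B}{4(1+k^2\xi^2)\left(1+k^2(1-\xi)^2\right)}a^2$, i.e.\ exactly your $\gamma(\xi)B a^2$ with $\gamma>0$; combining this with the slope $\partial_{\ell^2}(\omega_0-\omega_{-1})=\frac{1}{\xi(1+k^2\xi^2)}+\frac{1}{(1-\xi)\left(1+k^2(1-\xi)^2\right)}$ as you propose does reproduce \eqref{def epsilon_a}, and the sign dichotomy in $B$ gives cases \ref{B>0} and \ref{B<0}.

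There is, however, one genuine mis-step: you dismiss all collisions other than $\omega_{-1}=\omega_0$ as ``equal-signature.'' That is false for collisions between the mode $n=-1$ and modes $n\geq 1$: in the range $\ell_0^2\le\ell^2<\ell_-^2$ one has $\mu_{-1}(\ell,\xi)<0$ while $\mu_n(\ell,\xi)>0$ for every $n\neq 0,-1$, so a collision $\omega_{-1}=\omega_n$ with $n\geq1$ would be an \emph{opposite}-signature collision, capable of ejecting eigenvalues from the imaginary axis; moreover, if it occurred near $\ell_c^2$ it would destroy the two-dimensionality of your spectral subspace and invalidate the $2\times2$ reduction — in particular the stability conclusion in the case $B<0$ would be unsupported. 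The signature argument cannot rule this out; one must show such collisions simply do not occur when $\ell^2\geq\ell_0^2$. This is precisely the content of the paper's Lemma \ref{lem collision}: monotonicity of the auxiliary function $f$ together with $f(2)>0$ excludes all $n\geq2$, and for $n=1$ the unique root of $F_1(\ell^2,\xi)=0$ (which exists only when $k^2>3/(1-\xi^2)$) satisfies $\ell^2=\frac{\kappa k^2(1-\xi^2)\xi^2}{1+k^2}\cdot\frac{k^2(1-\xi^2)-3}{1+k^2(1+3\xi^2)}<\ell_0^2$, hence lies outside the regime of the theorem. Insert this computation before your projection step (and carry the error terms $O\left(a^2|\varepsilon|+\varepsilon^2|a|+|a|^3\right)$ in the discriminant, as the paper does), and your plan becomes the paper's proof.
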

\begin{remark}
From Lemma \ref{lem5.7} we see that the transverse spectral instability holds for $-1 \le b \le 3$, which covers the well-known examples of CH-KP-I ($b = 2$) and DP-KP-I ($b = 3$).
\end{remark}
The remainder of this subsection aims at proving this theorem, and the main argument is provided in Section \ref{subsec bddinstab}.

\subsection{The Krein signature and stability under short-wavelength transverse perturbations}
We start the analysis of the spectrum of $\mathcal{A}_0(\ell, \xi)$ with the values of $\ell$ away from the origin, $|\ell|>\ell_0$, for some $\ell_0 >0$. Recall that now we take $\sigma = -1$. It is straight forward to verify that
 \begin{itemize}
   \item $K_{n, \xi} = 1$  for all $n \in \mathbb{Z} \backslash\{-1,0\}$, $k > 0$, and $\xi \in\left(0, \frac{1}{2}\right]$, as
$$
\mu_n(\ell, \xi)> \frac { \kappa k^2\left((n+\xi)^2-1\right)}{1+k^2}\geqslant \frac{\kappa k^2}{\left(1+k^2\right)}\xi(2+\xi);
$$
   \item when $n = -1$, the eigenvalue
\begin{equation}
\mu_{-1}(\ell, \xi)
=\left(\frac{\ell^2}{(1-\xi)^2}-\frac{\kappa k^2\xi(2-\xi)}{\left(1+k^2\right)}\right) \nonumber
\end{equation}
is positive when $\ell^2>\ell_{-}^2$, where $\ell_{-}^2 := \frac{\kappa k^2\xi(2-\xi)(1-\xi)^2}{\left(1+k^2\right)}$; it is zero when $\ell^2=\ell_{-}^2$, and it is negative when $\ell^2<\ell_{-}^2$;
   \item when the Fourier mode $n=0$, the eigenvalue
\begin{equation}
\mu_{0}(\ell, \xi)
=\left(\frac{\ell^2}{\xi^2}-\frac{\kappa k^2(1-\xi^2)}{\left(1+k^2\right)}\right) \nonumber
\end{equation}
is positive when $\ell^2>\ell_{0}^2$, where $\ell_{0}^2$ is defined in \eqref{def l_c}; it is zero when $\ell^2=\ell_{0}^2$, and it is negative when $\ell^2<\ell_{0}^2$. (see also Figure \ref{fig FG} (a)).
 \end{itemize}
\begin{figure}[htbp]
\centering
\subfigure[]
{\begin{minipage}[t]{0.48\textwidth}
\centering
\includegraphics[width=2.95in]{H1}
\end{minipage}
}
\subfigure[]
{\begin{minipage}[t]{0.48\textwidth}
\centering
\includegraphics[width=2.95in]{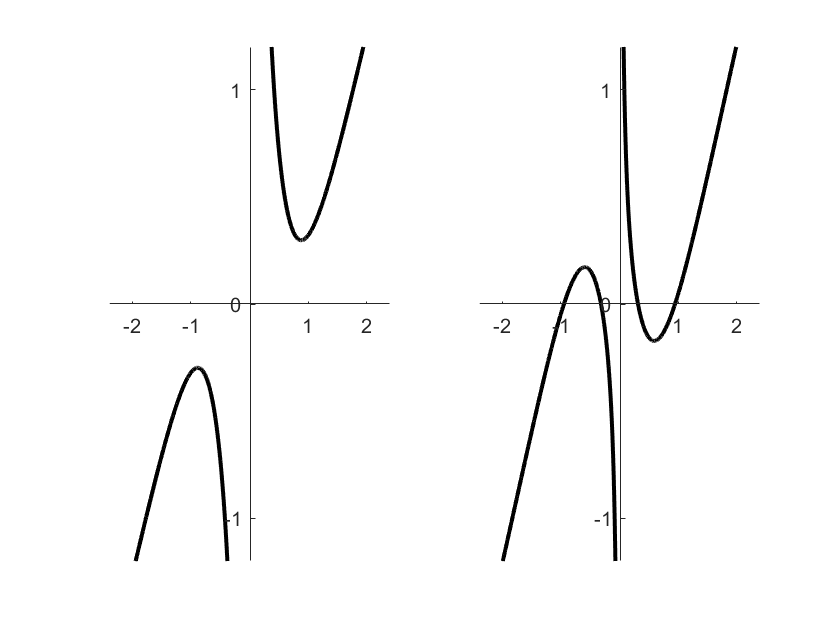}
\end{minipage}
}
\caption{[$b$-KP-I equation] (a) Graph of the map $n \mapsto \frac{ \ell^2}{n ^ { 2 }} - \frac{ \kappa k^2(1 - n^2)}{1+k^2}$ for $\kappa =2, k=1$ and $\ell=0.8$ and $\ell=0.3$ (from left to right). The eigenvalues of $\mathcal{K}_0(\ell)$ are found by taking $n=p+\xi, p \in \mathbb{Z}$. (b) Graph of the dispersion relation $n \mapsto n\left(\frac{\kappa}{1+k^2}-\frac{\kappa}{1+k^2 n^2}+ \frac{ \ell^2}{n ^ { 2 } ( 1 + k ^ { 2 } n ^ { 2 })}\right)$ for the same values of $\ell, \kappa$ and $k$. The imaginary parts of the eigenvalues of $\mathcal{A}_0(\ell)$ are found by taking $n=p+\xi, p \in \mathbb{Z}$. Notice that the zeros of the two maps are the same. \label{fig FG}}
\end{figure}

Here
$$
\ell_0^2=\frac{ \kappa k^2(1-\xi^2)\xi^2}{\left(1+k^2\right)}
<\ell_{-}^2=\frac{ \kappa k^2(1-\xi)^2\xi(2-\xi)}{\left(1+k^2\right)},
$$
for any $\xi\in\left(0, \frac{1}{2}\right)$, so that the unperturbed operator $\mathcal{K}_0(\ell, \xi)$ has positive spectrum for $\ell^2>\ell_{-}^2$, one negative eigenvalue if $\ell_{0}^2 \le \ell^2<\ell_{-}^2$, and two negative eigenvalues if $\ell^2<\ell_{0}^2$. The following result is an immediate consequence of these properties and we refer to \cite [Lemma 5.4]{Har11} for a detailed proof in a similar situation.
\begin{lemma}[Stability under short-wavelength transverse perturbation]\label{lem5.3}
Assume that $\xi \in\left(0, \frac{1}{2}\right]$. For any $\varepsilon_*>0$ there exists $a_*>0$, such that the spectrum of $\mathcal{A}_a(\ell, \xi)$ is purely imaginary, for any $\ell$ and a satisfying $\ell^2 \geqslant \ell_{-}^2+\varepsilon_*$ and $|a| \leqslant a_*$.
\end{lemma}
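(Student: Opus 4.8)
The plan is to show that, in the regime $\ell^2\ge\ell_-^2+\varepsilon_*$, the self-adjoint operator $\mathcal{K}_0(\ell,\xi)$ is \emph{uniformly} positive definite, and then transfer this to the purely imaginary character of $\operatorname{spec}_{L^2(\mathbb{T})}(\mathcal{A}_a(\ell,\xi))$ via the factorization $\mathcal{A}_a(\ell,\xi)=J_\xi\mathcal{K}_a(\ell,\xi)$ together with a perturbation argument. First I would read off from the three bullet computations preceding the statement that, for $\sigma=-1$ and $\ell^2\ge\ell_-^2+\varepsilon_*$, every eigenvalue $\mu_n(\ell,\xi)$ of $\mathcal{K}_0(\ell,\xi)$ is strictly positive: for $n\ne-1,0$ one has $\mu_n\ge \kappa k^2\xi(2+\xi)/(1+k^2)$; for $n=-1$, $\mu_{-1}=(\ell^2-\ell_-^2)/(1-\xi)^2\ge\varepsilon_*/(1-\xi)^2$; and for $n=0$, using $\ell_0^2<\ell_-^2$, $\mu_0=(\ell^2-\ell_0^2)/\xi^2\ge(\varepsilon_*+\ell_-^2-\ell_0^2)/\xi^2$. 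Taking the minimum produces a gap $\delta=\delta(\varepsilon_*,\xi,k,\kappa)>0$, \emph{independent of} $\ell$, with $\mu_n(\ell,\xi)\ge\delta$ for all $n$; moreover $\mu_n\gtrsim 1+(n+\xi)^2$, so that $\langle\mathcal{K}_0(\ell,\xi)\varphi,\varphi\rangle\ge c_1\|\varphi\|_{H^1(\mathbb{T})}^2$. The uniformity in $\ell$ is essential and available here because the difference $\widetilde{\mathcal{K}}_a:=\mathcal{K}_a-\mathcal{K}_0$ carries no $\ell^2(\partial_z+i\xi)^{-2}$ term, so $\|\widetilde{\mathcal{K}}_a\|=O(|a|)$ uniformly in $\ell$.

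Next I would exploit positivity. Since $\mathcal{K}_0(\ell,\xi)>0$, the operator $\mathcal{A}_0(\ell,\xi)=J_\xi\mathcal{K}_0(\ell,\xi)$ is similar, through $\mathcal{K}_0^{1/2}$, to the skew-adjoint operator $\mathcal{K}_0^{1/2}J_\xi\mathcal{K}_0^{1/2}$, hence has purely imaginary spectrum $\{i\omega_n(\ell,\xi)\}$; equivalently every Krein signature satisfies $K_{n,\xi}=+1$. For the perturbation I would combine two mechanisms. First, an energy identity: pairing the eigenrelation $J_\xi\mathcal{K}_a\varphi=\mu\varphi$ against $\mathcal{K}_a\varphi$ and using skew-adjointness of $J_\xi$ gives $\Re\bigl(\mu\langle\varphi,\mathcal{K}_a\varphi\rangle\bigr)=0$, while a short integration by parts yields $\Re\langle\varphi,\mathcal{K}_a\varphi\rangle\ge(c_1-C|a|)\|\varphi\|_{H^1(\mathbb{T})}^2>0$ for $|a|\le a_*$; together these confine $\operatorname{spec}(\mathcal{A}_a(\ell,\xi))$ to a thin cone $|\Re\mu|\le C|a|\,|\Im\mu|$ about the imaginary axis. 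Second, for eigenvalues of $\mathcal{A}_0(\ell,\xi)$ that are \emph{simple and isolated}, the spectral symmetry of Lemma \ref{lem-symmetry} finishes the job: the unique perturbed eigenvalue $\mu$ near such an $i\omega_n$ must coincide with its reflection $-\overline{\mu}$, forcing $\Re\mu=0$.

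The main obstacle is the treatment of eigenvalue \emph{collisions} of $\mathcal{A}_0(\ell,\xi)$, where the cone bound alone does not force $\Re\mu=0$. Indeed, because $\mathcal{K}_a(\ell,\xi)$ fails to be self-adjoint for $b\ne2,3$, a factorization $J_\xi\mathcal{K}_a$ with merely positive Hermitian part can in principle produce $O(|a|)$ off-axis eigenvalues, so the Hamiltonian Krein-signature conclusion does not transfer for free. The crux is therefore to show that, at each collision of two modes---both of which necessarily carry the \emph{same} positive signature $K_{n,\xi}=+1$ in this regime---the $O(|a|)$ non-self-adjoint part $\widetilde{\mathcal{K}}_a$ does not split the colliding pair off the axis. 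I would handle this exactly as in the reduced-matrix computations used for Theorems \ref{the4.1} and \ref{the5.1}: restrict $\mathcal{A}_a(\ell,\xi)$ to the finite-dimensional spectral subspace associated with the colliding eigenvalues, compute the representing matrix to leading order (where it is governed by the positive form $\mathcal{K}_0$ and the skew-adjoint $J_\xi$, hence similar to a skew-symmetric matrix with imaginary spectrum), and verify that the positivity gap $\delta$ dominates the $O(|a|)$ correction; this is precisely the point at which the detailed computation of \cite[Lemma 5.4]{Har11} is adapted to the present quasilinear, non-Hamiltonian setting. Finally, since both $\delta$ and the constant $C$ in the perturbation estimate are uniform in $\ell$, a single threshold $a_*=a_*(\varepsilon_*,\xi,k,\kappa)$ works for all admissible $\ell$, completing the argument.
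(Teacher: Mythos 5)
Your overall skeleton is the same as the paper's: the three bullet computations preceding the statement give positivity of $\mathcal{K}_0(\ell,\xi)$ uniformly for $\ell^2\ge\ell_-^2+\varepsilon_*$ (so all Krein signatures equal $+1$), and this is transferred to $\mathcal{A}_a(\ell,\xi)$ by perturbation; the paper compresses exactly this into the remark that the lemma is "an immediate consequence of these properties" together with a citation of \cite[Lemma 5.4]{Har11}. Your added details are correct as far as they go: the uniform gap $\delta$ (using $\mu_{-1}=(\ell^2-\ell_-^2)/(1-\xi)^2$, $\mu_0=(\ell^2-\ell_0^2)/\xi^2$ and $\ell_0^2\le\ell_-^2$), the observation that $\mathcal{K}_a-\mathcal{K}_0$ carries no $\ell$-dependence so the $O(|a|)$ bound is uniform in $\ell$, the cone estimate $|\Re\mu|\le C|a|\,|\Im\mu|$ obtained by pairing the eigenvalue relation with $\mathcal{K}_a\varphi$, and the use of Lemma \ref{lem-symmetry} to pin simple isolated eigenvalues to the imaginary axis. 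You are also right, and more careful than the paper, in flagging that the non-self-adjointness of $\mathcal{K}_a$ for $b\ne 2,3$ blocks the verbatim use of Haragus's argument — though note that in \cite[Lemma 5.4]{Har11} the operator $\mathcal{K}_a$ \emph{is} self-adjoint, so positivity makes $J\mathcal{K}_a$ similar to the skew-adjoint $\mathcal{K}_a^{1/2}J\mathcal{K}_a^{1/2}$ and the proof needs no collision analysis at all; there is no "detailed reduced-matrix computation" in that lemma for you to adapt.

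The genuine gap is your treatment of collisions, which is where your route diverges from the paper's and where it does not close. First, the proposed mechanism — "the positivity gap $\delta$ dominates the $O(|a|)$ correction" — is not a mechanism: $\delta$ separates the colliding cluster from the rest of the spectrum but says nothing about how the degenerate pair itself splits. A non-normal $O(a)$ perturbation of $i\omega I_2$ with real symmetric off-diagonal entries has eigenvalues $i\omega\pm a$, off the axis, and this is consistent with both your cone bound (since $\omega\ne 0$ at all collisions in this regime) and the reflection symmetry $\mu\mapsto-\overline{\mu}$; indeed the paper's own Lemma \ref{lem5.6} exhibits exactly such off-axis splitting (when $B>0$) in the presence of the same gap structure, so the distinguishing feature must be the signatures — and in the non-self-adjoint setting, "same signature implies no splitting" is precisely the unproven assertion. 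Second, deferring to "compute and verify" is inadequate here because, unlike the single collision of the pair $(0,-1)$ at $\ell^2=\ell_c^2$, the regime $\ell^2\ge\ell_-^2+\varepsilon_*$ contains infinitely many collision events: $\tilde F_n(\ell^2,\xi)=\omega_n-\omega_0$ is linear and strictly decreasing in $\ell^2$ for each $n\ge1$, so $\omega_0$ crosses every $\omega_n$ at some collision value of $\ell^2$, and these values accumulate at infinity (similarly for $\omega_{-1}$ among the negative modes); moreover, for pairs of modes differing by more than one Fourier index the leading $O(a)$ coupling through $w_1=\cos z$ vanishes, forcing higher-order expansions. Your single threshold $a_*$ must be uniform over all these pairs and all $\ell$, and no such uniform estimate on the reduced $2\times 2$ matrices is offered. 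The paper avoids all of this by taking as standard the Krein-signature necessary condition stated after \eqref{3.16} (eigenvalues leave the axis only after a collision of opposite-signature eigenvalues); if you do not accept that transfer for non-self-adjoint $\mathcal{K}_a$ — and your skepticism is legitimate — then your proof is incomplete at exactly the step you identified as the crux.
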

It remains to determine the spectrum of $\mathcal{A}_a(\ell, \xi)$ for $0 < \ell^2 < \ell_{-}^2+\varepsilon_*$. We proceed as in Section \ref{se4.1} to decompose the spectrum of $\mathcal{A}_a(\ell, \xi)$ into $\sigma_0\left(\mathcal{A}_a(\ell, \xi)\right)$ containing a minimal number of eigenvalues, and $\sigma_1\left(\mathcal{A}_a(\ell, \xi)\right)$ for which we argue as in Theorem \ref{the4.1} to show that it is purely imaginary.

\subsection{Spectral decomposition of $\mathcal{A}_0(\ell, \xi)$ for $\ell_0^2 \le \ell^2 < \ell_{-}^2+\varepsilon_*$}\label{subsec spec decomp} 
Recall that the spectrum of $\mathcal{A}_0(\ell, \xi)$ is given in \eqref{ev A nonper}. The distribution of these eigenvalues on the imaginary axis can be inferred from the study of the dispersion relation (see Figure \ref{fig FG} (b)). The discussion preceding Lemma \ref{lem5.3} implies that the eigenvalues of $\mathcal{A}_0(\ell, \xi)$ that might lead to instability under perturbations are
\[
\begin{aligned}
& i \omega_{-1}(\ell,\xi), \quad \qquad & \text{if } & \ \ell_0^2 \le \ell^2 {\leq} \ell_-^2, \\
& i \omega_{-1}(\ell,\xi),\ i \omega_{0}(\ell,\xi), & \text{if } & \ 0 < \ell^2 < \ell_0^2.
\end{aligned}
\]
The next step is to separate these eigenvalues from the remaining point spectra, which correspond to the ones with positive Krein signature. Such a separation is possible as long as there are no collisions between these eigenvalues and other ones. The following lemma confirms this lack of collision for transverse perturbations in the finite wave length regime.
\begin{lemma}\label{lem collision}
{Assume that $\xi \in\left(0, \frac{1}{2}\right]$.} The eigenvalues of $\mathcal{A}_0(\ell, \xi)$ satisfy
\begin{subequations}\label{eq collision}
\begin{equation}\label{eq collision 1}
\begin{split}
\omega_{-1}(\ell, \xi) & = \omega_{0}(\ell, \xi) \quad \text{only when }\ \ell^2 = \ell_c^2, \\
\omega_{-1}(\ell, \xi) & \ne \omega_{n}(\ell, \xi) \quad \text{for all }\ n \ne 0, -1, 1.
\end{split}
\end{equation}
When $\ell_0^2 \le \ell^2 {\leq} \ell_-^2$ it further holds that
\begin{equation}\label{eq collision 2}
\omega_{-1}(\ell, \xi) \ne \omega_{n}(\ell, \xi) \quad \text{for all }\ n \ne 0, -1.
\end{equation}
\end{subequations}
\end{lemma}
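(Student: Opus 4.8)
The plan is to treat the collision condition $\omega_{-1}(\ell,\xi)=\omega_n(\ell,\xi)$ as a single scalar equation in the unknown $\ell^2$. Writing $s_m := m+\xi$ and using the factorization \eqref{omega mu}, namely $\omega_m = \frac{s_m}{1+k^2 s_m^2}\mu_m$, I would clear the denominators $1+k^2 s_{-1}^2$ and $1+k^2 s_n^2$ and then divide by the common factor $s_{-1}-s_n = -1-n \ne 0$. Since $s_m \ne 0$ for every $m$ (because $\xi \in (0,\tfrac12]$) and $s_{-1}^2+s_{-1}s_n+s_n^2>0$, the coefficient of $\ell^2$ never vanishes, so the resulting equation is genuinely affine in $\ell^2$ and determines a unique critical value. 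Setting $P := s_{-1}s_n$ and $\Sigma := s_{-1}^2+s_{-1}s_n+s_n^2$, a direct computation gives
\begin{equation*}
\ell^2_{-1,n} = \frac{\kappa k^2 P\big[(\Sigma-1)+k^2 P(1+P)\big]}{(1+k^2)\,(1+k^2\Sigma)},
\end{equation*}
and the whole lemma reduces to locating this single value $\ell^2_{-1,n}$ for the various $n$.

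For the named collision $n=0$ I would substitute $s_{-1}=-(1-\xi)$ and $s_0=\xi$, so that $P=-(1-\xi)\xi$ and $\Sigma=1-3\xi+3\xi^2$, whence $\Sigma-1=-3\xi(1-\xi)$ and $1+P=1-\xi+\xi^2$. Inserting these and factoring produces
\begin{equation*}
\ell^2_{-1,0}=\frac{\kappa k^2(1-\xi)^2\xi^2\,\big[3+k^2(1-\xi+\xi^2)\big]}{(1+k^2)\,\big[1+k^2(1-3\xi+3\xi^2)\big]}.
\end{equation*}
It then remains to check that this is exactly $\ell_c^2$ from \eqref{def l_c}: expanding the numerator $(1+\xi)[1+k^2(1-\xi)^2]+(1+k^2\xi^2)(2-\xi)$ and the denominator $(1-\xi)[1+k^2(1-\xi)^2]+(1+k^2\xi^2)\xi$ of $\ell_c^2$ collapses them to $3+k^2(1-\xi+\xi^2)$ and $1+k^2(1-3\xi+3\xi^2)$, respectively. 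Because $\ell^2_{-1,0}$ is the unique root of the affine equation, this establishes the first line of \eqref{eq collision 1}.

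For the remaining modes I would split into sign regimes. For $n\ge 2$ one has $P=-(1-\xi)(n+\xi)\le -1$, hence $1+P\le 0$ together with $P<0$, while $\Sigma>1$ (completing the square, $\Sigma=(s_n+\tfrac12 s_{-1})^2+\tfrac34 s_{-1}^2\ge\tfrac94$ since $s_n\ge2$ and $|s_{-1}|<1$); thus the bracket $(\Sigma-1)+k^2P(1+P)>0$ and $\ell^2_{-1,n}<0$, so no collision occurs for any real $\ell$. For $n=1$ the formula simplifies (now $P=-(1-\xi^2)$, $\Sigma=1+3\xi^2$) to $\ell^2_{-1,1}=\dfrac{\kappa k^2(1-\xi^2)\xi^2\,[k^2(1-\xi^2)-3]}{(1+k^2)(1+k^2(1+3\xi^2))}$, and the comparison $\ell^2_{-1,1}<\ell_0^2$ reduces to $-4k^2\xi^2<4$, which always holds; hence no collision with $n=1$ is possible once $\ell^2\ge\ell_0^2$, giving \eqref{eq collision 2} for $n=1$ (and explaining why $n=1$ must be excluded from \eqref{eq collision 1}, since for large $k$ the collision persists at $\ell^2_{-1,1}<\ell_0^2$). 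Finally, for $n\le-2$ I would argue directly by sign rather than through the formula: writing $\omega_m=g(s_m)\mu_m$ with $g(s_m)=s_m/(1+k^2 s_m^2)$ negative for $m\le-1$ and positive for $m\ge0$, in the range $\ell_0^2\le\ell^2\le\ell_-^2$ the Krein data give $\mu_{-1}\le0$ and $\mu_n>0$, so $\omega_{-1}\ge0$ while $\omega_n<0$, ruling out equality; equivalently, any such collision forces $\mu_{-1}>0$, i.e. $\ell^2_{-1,n}>\ell_-^2$, lying strictly above the regime.

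The bulk of the work, and the main obstacle, is purely algebraic: verifying that $\ell^2_{-1,0}$ really collapses to the compact expression $\ell_c^2$, and pinning down the signs of the factors $P$, $1+P$ and of the bracket $(\Sigma-1)+k^2P(1+P)$ uniformly over $\xi\in(0,\tfrac12]$. Once these sign determinations and the one identity are in hand, every case of \eqref{eq collision 1}--\eqref{eq collision 2} follows at once from the location of the single critical value $\ell^2_{-1,n}$.
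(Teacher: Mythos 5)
Your proposal is correct and takes essentially the same route as the paper: both reduce each collision $\omega_{-1}=\omega_n$ to an equation affine in $\ell^2$ with nonvanishing slope, identify the unique root for $n=0$ with $\ell_c^2$, show for $n=1$ that the root $\frac{\kappa k^2(1-\xi^2)\xi^2\left[k^2(1-\xi^2)-3\right]}{(1+k^2)\left(1+k^2(1+3\xi^2)\right)}$ lies below $\ell_0^2$, rule out $n\ge 2$ by a sign argument on the $\ell^2$-independent part, and dispose of $n\le -2$ via the signs of $\mu_{-1}\le 0$, $\mu_n>0$ in the range $\ell_0^2\le\ell^2\le\ell_-^2$. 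Your closed-form root $\ell^2_{-1,n}$ in terms of $P=s_{-1}s_n$ and $\Sigma=s_{-1}^2+s_{-1}s_n+s_n^2$ is just a compact repackaging of the paper's decomposition $F_n=G_n+\ell^2 H_n$, and all your sign determinations and the algebraic identity collapsing $\ell^2_{-1,0}$ to $\ell_c^2$ check out.
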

\begin{proof}
Define the collision function
\begin{align*}
F_n(\ell^2, \xi) := & \ \omega_n(\ell,\xi) - \omega_{-1}(\ell, \xi) = \frac{1 - \xi}{1 + k^2(1-\xi)^2} \mu_{-1}(\ell,\xi) + \frac{n + \xi}{1 + k^2 (n + \xi)^2} \mu_n(\ell, \xi) \\
= & \ \frac{\kappa k^2}{1 + k^2} \LB \frac{(1 - \xi) \LB (1 - \xi)^2 -1 \RB}{1 + k^2 (1 - \xi)^2} + \frac{(n + \xi) \LB (n + \xi)^2 - 1 \RB}{1 + k^2 (n + \xi)^2} \RB + \\
& \ \ell^2 \LB \frac{1}{(1 - \xi) \LB 1 + k^2 (1 - \xi)^2 \RB} + \frac{1}{(n + \xi) \LB 1 + k^2(n + \xi)^2 \RB} \RB \\
=: & \ G_n(\xi) + \ell^2 H_n(\xi).
\end{align*}
Clearly $F_n$ is linear in $\ell^2$. From the previous discussion we see that
\[
\omega_{-1}(\ell,\xi) {\geq} 0, \qquad \text{and} \qquad \omega_{n}(\ell, \xi)
\left\{\begin{aligned}
& \ge 0, \quad \text{for} \;n\geqslant0, \\														
& <0,\quad \text{for} \;n\leq-2. \end{aligned} \right.
\]
Thus only $\omega_{n}(\ell, \xi)$ with $n \ge 0$ is possible to collide with $\omega_{-1}(\ell, \xi)$.

When $n = 0$, we consider solving
\[
F_0(\ell^2) = \frac{1 - \xi}{1 + k^2(1-\xi)^2} \mu_{-1}(\ell,\xi) + \frac{\xi}{1 + k^2 \xi} \mu_0(\ell, \xi) = 0
\]
for $\ell^2$. Since both $\mu_{-1}(\ell,\xi)$ and $\mu_{0}(\ell,\xi)$ are linear increasing functions in $\ell^2$, so is $F_0(\ell^2, \xi)$. Also we have $F_0(\ell_0^2, \xi) < 0$ and $F_0(\ell_-^2, \xi) > 0$ {for $\xi \in\left(0, \frac{1}{2}\right)$}. Hence for any given {$\xi \in \left(0, \frac{1}{2}\right)$} there exists a unique $\ell_c^2 \in (\ell_0^2, \ell_-^2)$ such that $F(\ell_c^2,\xi) = 0$. Explicit computation reveals that $\ell_c^2$ is given in \eqref{def l_c}. {For the case $\xi=\frac{1}{2}$, we have $\ell_c^2=\ell_0^2=\ell_-^2$, and hence $\omega_{-1}(\ell_c^2,\xi)= \omega_{0}(\ell_c^2,\xi)=0$.} This proves the first part of \eqref{eq collision}.

Now for $n \ge 1$, the function $G_n(\xi)$ is related to the function
\[
f(x) = \frac{(1 - \xi) \LB (1 - \xi)^2 -1 \RB}{1 + k^2 (1 - \xi)^2} + \frac{(x + \xi) \LB (x + \xi)^2 - 1 \RB}{1 + k^2 (x + \xi)^2},
\]
which is increasing for $|x + \xi| > \frac{1}{\sqrt3}$. Moreover,
\[
f(2) = \frac {\left(6+9\xi+9\xi^2\right)+3k^2\left(1-\xi-\xi^2\right)\left(2+\xi\right)\left(1-\xi\right)}
{\left(1+k^2(2+\xi)^2\right)\left(1+k^2(1-\xi)^2\right)}>0.
\]
Therefore $\omega_{-1}(\ell, \xi)$ does not collide with $\omega_{n}(\ell, \xi)$ for $n \ge 2$, and hence it suffices to consider the case when $n = 1$.
Direct computation shows that
\[
G_1(\xi) = \frac{\kappa k^2}{1 + k^2}  \frac{2 \xi^2 \LB 3 - k^2 (1 - \xi^2) \RB}{\LB 1 + k^2 (1 - \xi)^2 \RB \LB 1 + k^2 (1 + \xi)^2 \RB},
\]
and
\[
G_1(\xi) < 0 \quad \text{only if} \quad k^2 > \frac{3}{1 - \xi^2}.
\]
In this case, solving $F_1(\ell^2, \xi) = 0$ we find that the unique solution for $\ell^2$ is
\[
\ell^2 = \frac{ \kappa k^2(1-\xi^2)\xi^2}{\left(1+k^2\right)} \cdot\frac{k^2(1-\xi^2)-3}{1+k^2\left(1+3\xi^2\right)} < \frac{ \kappa k^2(1-\xi^2)\xi^2}{\left(1+k^2\right)} = \ell_0^2.
\]
Therefore the second part of \eqref{eq collision} is proved.
\end{proof}

From the above lemma it follows that
\begin{lemma}\label{lem5.4}
Given $\xi \in\left(0, \frac{1}{2}\right]$, there exist $\varepsilon_*>0$ and $c_*>0$ such that
\begin{enumerate}[label=\textup{(\roman*)}]
\item for any $\ell$ satisfying $\ell_c^2+\varepsilon_*<\ell^2<\ell_{-}^2+\varepsilon_*$, the spectrum of $\mathcal{A}_0(\ell, \xi)$ decomposes as
$$
\sigma\left(\mathcal{A}_0(\ell, \xi)\right) =\left\{i \omega_{-1}(\ell, \xi)\right\} \cup \sigma_1\left(\mathcal{A}_0(\ell, \xi)\right),
$$
with $\operatorname{dist}\left(i \omega_{-1}(\ell, \xi), \sigma_1\left(\mathcal{A}_0(\ell, \xi)\right)\right) \geqslant c_*  >0$;
\item for any $\ell$ satisfying $\ell_0^2 \le \ell^2 \leqslant \ell_c^2+\varepsilon_*$, the spectrum of $\mathcal{A}_0(\ell, \xi)$ decomposes as
$$
\sigma\left(\mathcal{A}_0(\ell, \xi)\right)=\left\{i \omega_{-1}(\ell, \xi), i \omega_0(\ell, \xi)\right\} \cup \sigma_1\left(\mathcal{A}_0(\ell, \xi)\right),
$$
with $\operatorname{dist}\left(\left\{i \omega_{-1}(\ell, \xi), i \omega_0(\ell, \xi)\right\}, \sigma_1\left(\mathcal{A}_0(\ell, \xi)\right)\right) \geqslant c_*>0$.
\end{enumerate}
\end{lemma}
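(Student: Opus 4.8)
The plan is to exploit the fact that for $a=0$ the spectrum of $\mathcal{A}_0(\ell,\xi)$ is completely explicit, consisting of the purely imaginary eigenvalues $\{i\omega_n(\ell,\xi):n\in\mathbb{Z}\}$ of \eqref{ev A nonper}, and to upgrade the pointwise non-collision information of Lemma \ref{lem collision} to a uniform spectral gap by a compactness argument in the single real parameter $\ell^2$ (with $\xi$ fixed, so that $c_*$ and $\varepsilon_*$ are allowed to depend on $\xi$). Since $\omega_n(\ell,\xi)=(n+\xi)\frac{\kappa}{1+k^2}+O(1/n)$ as $|n|\to\infty$, uniformly for $\ell^2$ in a bounded set, only finitely many modes can lie within a fixed distance of the bounded ``dangerous'' eigenvalues $i\omega_{-1}$ and $i\omega_0$; all remaining modes are automatically separated. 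Hence in both cases it suffices to show that the relevant distance function, taken as a minimum over a finite index set, is continuous and strictly positive on a compact $\ell^2$-interval, and then to extract its positive minimum $c_*>0$.

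For part (i) I would fix $\xi$ and work on the compact interval $\ell^2\in[\ell_c^2+\varepsilon_*,\ell_-^2+\varepsilon_*]$. Combining the sign analysis preceding Lemma \ref{lem5.3} (which gives $\omega_{-1}\ge 0$, $\omega_n\ge 0$ for $n\ge 0$, and $\omega_n<0$ with a uniform gap from $0$ for $n\le -2$) with Lemma \ref{lem collision}, the eigenvalue $\omega_{-1}$ meets no other $\omega_n$ on this interval: collisions with $n\le -2$ are excluded by sign; the computation in the proof of Lemma \ref{lem collision} shows that a collision with $\omega_n$, $n\ge 1$, occurs (if at all) only at some $\ell^2<\ell_0^2$; and the unique collision with $\omega_0$ happens exactly at $\ell^2=\ell_c^2$, which the gap $\varepsilon_*$ excludes. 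Thus $d(\ell^2):=\min_{n\neq-1}|\omega_{-1}-\omega_n|$ is continuous and strictly positive on the compact interval, and attains a positive minimum $c_*$, giving the stated decomposition.

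For part (ii) I would first shrink $\varepsilon_*$ so that $\ell_c^2+\varepsilon_*<\ell_-^2$, keeping the whole interval $[\ell_0^2,\ell_c^2+\varepsilon_*]$ inside the regime where the finite-wavelength statement \eqref{eq collision 2} of Lemma \ref{lem collision} applies. On this interval the pair $\{i\omega_{-1},i\omega_0\}$ is treated as a single block, so their mutual collision at $\ell_c^2$ is harmless; what must be checked is that \emph{each} of $\omega_{-1}$ and $\omega_0$ stays bounded away from every $\omega_n$ with $n\neq 0,-1$. For $\omega_{-1}$ this is again Lemma \ref{lem collision}. For $\omega_0$ the separation from the negative modes is immediate, since $\omega_0\ge 0$ while $\omega_n<0$ for $n\le -2$ with a uniform gap; the separation from the positive modes follows from the ordering $\omega_0\le\omega_{-1}$ on $[\ell_0^2,\ell_c^2]$ (a consequence of $F_0=\omega_0-\omega_{-1}$ being increasing in $\ell^2$ with its unique zero at $\ell_c^2$) together with the gap between $\omega_{-1}$ and $\omega_1$ established above and the ordering $\omega_1<\omega_2<\cdots$; for $\varepsilon_*$ small this separation persists by continuity on the slightly enlarged interval $[\ell_0^2,\ell_c^2+\varepsilon_*]$. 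A final continuity-and-compactness step produces the uniform $c_*$, and the two overlapping ranges together cover $\ell_0^2\le\ell^2<\ell_-^2+\varepsilon_*$.

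The step I expect to be the genuine obstacle is the separation of the second dangerous eigenvalue $\omega_0$ in part (ii): Lemma \ref{lem collision} only tracks collisions against $\omega_{-1}$, so the absence of collisions between $\omega_0$ and the positive modes $\omega_n$, $n\ge 1$, is not supplied directly and must be deduced from the ordering argument above (via the monotonicity and sign of $F_0$, using \eqref{omega mu}). Care is also needed in fixing $\varepsilon_*$ small enough that the finite-wavelength hypothesis of Lemma \ref{lem collision} persists throughout $[\ell_0^2,\ell_c^2+\varepsilon_*]$ and that in part (i) the neighborhood of the collision point $\ell_c^2$ is excluded; the rest is a routine continuity argument on a compact parameter interval.
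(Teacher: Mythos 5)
Your proposal is correct and follows essentially the same route as the paper, which states Lemma \ref{lem5.4} as an immediate consequence of Lemma \ref{lem collision} (in the style of \cite[Lemma 5.5]{Har11}) without writing out the details: the explicit spectrum \eqref{ev A nonper}, the sign analysis preceding Lemma \ref{lem5.3}, and a continuity--compactness argument in $\ell^2$ yield the uniform gap $c_*$. You also correctly identify and patch the one point the paper leaves implicit --- that Lemma \ref{lem collision} only excludes collisions against $\omega_{-1}$, so the separation of $\omega_0$ from the modes $\omega_n$, $n\geq 1$, must be deduced separately, which your ordering argument via the monotonicity and sign of $F_0$ (and the positivity of $F_n$ for $n\geq 1$ on the relevant range) legitimately supplies.
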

Continuity arguments show that for sufficiently small $a$, this decomposition persists for the operator $\mathcal{A}_a(\ell, \xi)$, and we argue as in Section \ref{4.1.2} to locate the spectrum of $\mathcal{A}_a(\ell, \xi)$.

\subsection{Spectrum of $\mathcal{A}_a(\ell, \xi)$ for $\ell_0^2 \le \ell^2<\ell_{-}^2+\varepsilon_*$}\label{subsec bddinstab}
Let us start with the case $\ell_c^2+\varepsilon_*<\ell^2<\ell_{-}^2+\varepsilon_*$. The following lemma asserts that for $\ell$ in this range the spectrum of $\mathcal{A}_a(\ell, \xi)$ is purely imaginary. The argument follows in a  similar way as that of \cite[Lemma 5.6]{Har11}, and hence we omit the proof here.
\begin{lemma} \label{lem5.5}
Assume that $\xi \in\left(0, \frac{1}{2}\right]$. There exist $\varepsilon_*>0$ and $a_*>0$ such that the spectrum of $\mathcal{A}_a(\ell, \xi)$ is purely imaginary, for any $\ell$ and a satisfying $\ell_c^2+\varepsilon_*<\ell^2<\ell_{-}^2+\varepsilon_*$ and $|a| \leqslant a_*$.
\end{lemma}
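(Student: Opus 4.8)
The plan is to mirror the reduction of Section~\ref{subsec spec decomp}: promote the spectral splitting of $\mathcal{A}_0(\ell,\xi)$ to the perturbed operator, dispatch the single potentially unstable eigenvalue by symmetry, and control the remaining spectrum through its Krein signature. Fix $\xi\in(0,\tfrac12]$ and $\ell$ with $\ell_c^2+\varepsilon_*<\ell^2<\ell_-^2+\varepsilon_*$. By Lemma~\ref{lem5.4}(i) the spectrum of $\mathcal{A}_0(\ell,\xi)$ splits as $\{i\omega_{-1}(\ell,\xi)\}\cup\sigma_1(\mathcal{A}_0(\ell,\xi))$ with a uniform gap $c_*>0$. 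Since $\|\mathcal{A}_a(\ell,\xi)-\mathcal{A}_0(\ell,\xi)\|_{H^1(\mathbb{T})\to L^2(\mathbb{T})}=O(|a|)$ uniformly in $\xi$, I would choose $a_*$ so small that the Riesz projection over a circle of radius $c_*/2$ about $i\omega_{-1}$ depends continuously on $a$ for $|a|\le a_*$. This produces a rank-one projection $\Pi_a^0$ and its complement $\Pi_a^1$, hence a persistent decomposition $\sigma(\mathcal{A}_a)=\sigma_0(\mathcal{A}_a)\cup\sigma_1(\mathcal{A}_a)$ with $\sigma_0$ inside the small circle and $\sigma_1$ outside.

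The isolated eigenvalue is then handled purely by symmetry. Because $\Pi_a^0$ has rank one, $\sigma_0(\mathcal{A}_a)$ is a single simple eigenvalue $\mu_a$ lying within $c_*/2$ of $i\omega_{-1}\in i\mathbb{R}$. By Lemma~\ref{lem-symmetry} the spectrum is symmetric under $\mu\mapsto-\bar\mu$, and since $-\overline{i\omega_{-1}}=i\omega_{-1}$, the reflected point $-\bar\mu_a$ again lies in the same small circle, hence in $\sigma_0(\mathcal{A}_a)=\{\mu_a\}$. Therefore $\mu_a=-\bar\mu_a$, i.e. $\mu_a\in i\mathbb{R}$.

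Next I would treat $\sigma_1$ via the Krein signature. Every eigenvalue $i\omega_n(\ell,\xi)$ with $n\neq-1$ has positive Krein signature: for $n\neq-1,0$ this is the computation preceding Lemma~\ref{lem5.3} ($K_{n,\xi}=1$), while for $n=0$ the assumption $\ell^2>\ell_c^2+\varepsilon_*>\ell_0^2$ forces $\mu_0(\ell,\xi)=(\ell^2-\ell_0^2)/\xi^2>0$, so $K_{0,\xi}=1$. Consequently $\langle\mathcal{K}_0(\ell,\xi)\,\cdot,\cdot\rangle$ is positive definite on $\mathrm{Range}(\Pi_0^1)$ and bounded below there by some $\delta>0$ (the smallest relevant value $\mu_0$ is uniformly positive on this range, and $\mu_n\to+\infty$). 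Since $\mathcal{A}_0=J_\xi\mathcal{K}_0$ with $J_\xi$ skew-adjoint and invertible for $\xi\neq0$, the restriction of $\mathcal{A}_0$ to $\mathrm{Range}(\Pi_0^1)$ is skew-adjoint with respect to the positive-definite form $\langle\mathcal{K}_0\,\cdot,\cdot\rangle$, hence has purely imaginary semisimple spectrum. Simple eigenvalues that remain isolated under perturbation stay on $i\mathbb{R}$ by the symmetry argument of the previous paragraph; the only delicate points are the finitely many (in any bounded region) collisions \emph{inside} $\sigma_1$, each of which is between modes of equal, positive Krein signature.

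The hard part will be these equal-signature collisions, and here the obstacle is genuine. At a collision $\ell_*$ with $\omega_m(\ell_*)=\omega_n(\ell_*)$, I would restrict $\mathcal{A}_a$ to the two-dimensional total spectral subspace, obtaining a $2\times2$ matrix $M_a(\ell)$ whose spectrum, by Lemma~\ref{lem-symmetry}, is invariant under $\mu\mapsto-\bar\mu$; this forces $\mathrm{tr}\,M_a\in i\mathbb{R}$ and $\det M_a\in\mathbb{R}$, so the discriminant $(\mathrm{tr}\,M_a)^2-4\det M_a$ is real and the pair stays imaginary exactly when this discriminant is $\le0$. At $a=0$ it equals $-(\omega_m-\omega_n)^2\le0$, and if $\mathcal{A}_a$ were Hamiltonian the equal-signature collision would give discriminant $-(\omega_m-\omega_n)^2-4a^2|\beta|^2\le0$, i.e. robustly stable. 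The difficulty is precisely that for $b\neq2,3$ the operator $\mathcal{K}_a(\ell,\xi)$ is \emph{not} self-adjoint, its anti-self-adjoint part being $O(|a|)$, so $\mathcal{A}_a$ is not of the form $J_\xi\times(\text{self-adjoint})$ and the classical Krein no-splitting conclusion does not apply verbatim. The crux is to show that this $O(|a|)$ correction does not reverse the sign of the product of the two off-diagonal entries of $M_a$, so that the discriminant stays $\le0$ at every equal-signature collision. Because the spectral symmetry already guarantees the discriminant is real, this collapses to a finite, explicit sign check of the reduced coupling coefficients, carried out exactly as in the matrix computation of Section~\ref{subsec bddinstab} and in \cite[Lemma 5.6]{Har11}; completing it gives $\sigma_1(\mathcal{A}_a)\subset i\mathbb{R}$, and together with the isolated eigenvalue this proves the lemma for $|a|\le a_*$.
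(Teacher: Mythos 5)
Your setup reproduces the skeleton of the paper's argument (which the paper itself omits, deferring to \cite[Lemma 5.6]{Har11}): persistence of the Lemma \ref{lem5.4}(i) splitting via Riesz projections, the symmetry of Lemma \ref{lem-symmetry} pinning the simple continuation of $i\omega_{-1}(\ell,\xi)$ to the imaginary axis, and positivity of $\mathcal{K}_0(\ell,\xi)$ on the complementary subspace. The genuine gap is in your final paragraph, which is where the lemma actually lives: you correctly identify that equal-signature collisions inside $\sigma_1$ are not controlled by symmetry alone, and that the classical Krein no-splitting mechanism is unavailable verbatim since $\mathcal{K}_a(\ell,\xi)$ is not self-adjoint for $b\neq 2,3$ --- but then you merely assert that a ``finite, explicit sign check'' of the reduced coupling coefficients would come out favorably, without performing it. This cannot be waved away as vacuous: such collisions really do occur in the range $\ell_c^2+\varepsilon_*<\ell^2<\ell_-^2$. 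For instance, with $\kappa=2$, $k=1$, $\xi=0.1$ one computes $\ell_c^2\approx 0.0183$ and $\ell_-^2\approx 0.1539$, while $\omega_0(\ell,\xi)=\omega_1(\ell,\xi)$ at $\ell^2\approx 0.0213$, strictly inside the interval, with both modes carrying positive Krein signature there. Nothing in your write-up supports the presumed favorable sign at such a collision: the analogous reduced coefficient for the $(0,-1)$ pair is $B$ in \eqref{def B}, which changes sign with $(b,k^2,\xi)$ (Lemma \ref{lem5.7}), so each equal-signature pair $(m,n)$ would require its own $O(a)$ matrix computation whose outcome cannot be taken for granted uniformly in $b$ and $k$. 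As written, your proof establishes the lemma only away from these interior collisions.

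For comparison, the route the paper points to closes exactly this step by Hamiltonian structure: in \cite{Har11} the analogue of $\mathcal{K}_a$ is self-adjoint, so on the invariant complementary subspace where $\mathcal{K}_a$ is positive definite the operator $J_\xi\mathcal{K}_a$ is skew-adjoint with respect to the $\mathcal{K}_a$-inner product (equivalently, similar to $\mathcal{K}_a^{1/2}J_\xi\mathcal{K}_a^{1/2}$), and the \emph{perturbed} spectrum there is purely imaginary with no collision-by-collision analysis at all. The present paper's substitute --- compute Krein signatures for the Hamiltonian operator $\mathcal{A}_0=J_\xi\mathcal{K}_0$ and ``transfer by perturbation'' --- is precisely the step whose justification you flagged as delicate; your diagnosis of the obstacle is in fact sharper than the paper's remark, but diagnosing the crux is not the same as resolving it, and the proposal stops short of a proof.
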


Next we consider the spectrum of $\mathcal{A}_a(\ell, \xi)$ for $\ell_0^2 \le \ell^2 \leqslant \ell_c^2+\varepsilon_*$. From Section \ref{subsec spec decomp} we know that the two eigenvalues of $\mathcal{A}_0(\ell, \xi)$ corresponding to the Fourier modes $n=-1$ and $n=0$ collide at $\ell^2 = \ell_c^2$. Using perturbation arguments we prove in the following that for $a$ sufficiently small and $\ell$ close to the value $\ell_c$, the linearized operator $\mathcal{A}_a(\ell, \xi)$ will continue to accommodate a pair of unstable eigenvalues. This, together with the definition of instability, proves Theorem \ref{the5.1}.

\begin{lemma}\label{lem5.6}
Assume that $\xi \in\left(0, \frac{1}{2}\right]$ and recall $B$ in \eqref{def B}.
In the case of $B>0$, there exist constants $\varepsilon_*, a_* > 0$, and $\varepsilon_a(\xi)$ as defined in \eqref{def epsilon_a} with $\varepsilon_a(\xi) < \varepsilon_*$, such that {if $\ell_0^2 \le \ell^2 \leqslant \ell_c^2+\varepsilon_*$} and
\begin{enumerate}[label=\textup{(\roman*)}]
\item $\left|\ell^2-\ell_c^2\right| > \varepsilon_a(\xi)$, then the spectrum of $\mathcal{A}_a(\ell, \xi)$ is purely imaginary;
\item $\left|\ell^2-\ell_c^2\right|<\varepsilon_a(\xi)$ then the spectrum of $\mathcal{A}_a(\ell, \xi)$ is purely imaginary, except for a pair of complex eigenvalues with opposite nonzero real parts.
\end{enumerate}
In the case $B < 0$, the spectrum of $\mathcal{A}_a(\ell, \xi)$ is purely imaginary for $\ell_0^2 \le \ell^2 \leqslant \ell_c^2+\varepsilon_*$.
\end{lemma}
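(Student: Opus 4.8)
The plan is to follow the template of the proof of Theorem \ref{the4.1}, now adapted to the two colliding Fourier modes $n=-1$ and $n=0$. First I would invoke Lemma \ref{lem5.4}(ii): for $\ell_0^2 \le \ell^2 \le \ell_c^2+\varepsilon_*$ the spectrum of $\mathcal{A}_0(\ell,\xi)$ splits as $\{i\omega_{-1}(\ell,\xi),\,i\omega_0(\ell,\xi)\}\cup\sigma_1(\mathcal{A}_0(\ell,\xi))$ with a uniform gap $c_*>0$. By continuity of the spectral projection in $a$ (exactly as in Lemma \ref{lem4.2} and the continuity argument following Lemma \ref{lem5.4}), this decomposition persists for $\mathcal{A}_a(\ell,\xi)$ when $|a|\le a_*$, producing a two-dimensional spectral subspace $\mathcal{X}_a(\ell,\xi)$ varying continuously with $a$, whose spectrum $\sigma_0(\mathcal{A}_a(\ell,\xi))$ stays uniformly separated from $\sigma_1(\mathcal{A}_a(\ell,\xi))$. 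For the complementary part, every eigenvalue $i\omega_n(\ell,\xi)$ with $n\notin\{-1,0\}$ carries the same positive Krein signature $K_{n,\xi}=1$ (this is precisely the computation preceding Lemma \ref{lem5.3}, restricted to $n\ne -1,0$), so any collision inside $\sigma_1$ leaves the eigenvalues on the imaginary axis; hence $\sigma_1(\mathcal{A}_a(\ell,\xi))\subset i\mathbb{R}$, just as in Theorem \ref{the4.1} and Lemma \ref{lem5.5}. The instability question thus reduces to the $2\times 2$ matrix representing $\mathcal{A}_a(\ell,\xi)$ on $\mathcal{X}_a(\ell,\xi)$.

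The core step is to compute that matrix. At $a=0$ the vectors spanning $\mathcal{X}_0(\ell,\xi)$ are the Fourier modes $e^{-iz}$ and $1$, with eigenvalues $i\omega_{-1}(\ell,\xi)$ and $i\omega_0(\ell,\xi)$ from \eqref{ev A nonper}, so $M_0(\ell,\xi)=\operatorname{diag}(i\omega_{-1},\,i\omega_0)$. To obtain $M_a(\ell,\xi)$ I would expand a basis of $\mathcal{X}_a(\ell,\xi)$ in powers of $a$ using $w=a\cos z+O(a^2)$ from \eqref{2.3}. Since multiplication by $w$ shifts Fourier modes by $\pm 1$, the leading coupling between modes $-1$ and $0$ already appears at order $a$ through the terms $(b+1)w$, $-k^2w_{zz}$, $-k^2(b-1)w_z(\partial_z+i\xi)$ and $-k^2w(\partial_z+i\xi)^2$, while the diagonal entries stay $i\omega_{-1}$ and $i\omega_0$ up to $O(a^2)$. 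The matrix then takes the form
\[
M_a(\ell,\xi)=\begin{pmatrix} i\omega_{-1}(\ell,\xi) & \beta\,a \\ \gamma\,a & i\omega_0(\ell,\xi)\end{pmatrix}+O(a^2),
\]
and everything hinges on the coefficients $\beta,\gamma$: applying $\mathcal{A}_a(\ell,\xi)$ to $e^{-iz}$ and reading off its projection onto $1$ gives $\gamma$, the reverse gives $\beta$. The two factors in the definition \eqref{def B} of $B$ should emerge exactly as these one-sided couplings (up to explicit positive scalars from the smoothing symbol $(1+k^2(n+\xi)^2)^{-1}$ and the transverse symbol $\ell^2(n+\xi)^{-2}$ at $n=-1,0$), so that the product of the off-diagonal entries equals $\beta\gamma\,a^2+O(a^3)$ with $\beta\gamma=\Theta\,B$ for an explicit $\Theta>0$.

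With the matrix in hand the eigenvalue analysis is elementary. The two eigenvalues in $\sigma_0(\mathcal{A}_a(\ell,\xi))$ are
\[
\lambda=\frac{i(\omega_{-1}+\omega_0)}{2}\pm\frac{1}{2}\sqrt{\,4\beta\gamma\,a^2-(\omega_{-1}-\omega_0)^2+O(a^3)\,},
\]
so they acquire nonzero real parts precisely when $4\beta\gamma\,a^2-(\omega_{-1}-\omega_0)^2>0$. Since $\omega_n(\ell,\xi)$ is affine in $\ell^2$ and $\omega_{-1}-\omega_0$ vanishes exactly at $\ell^2=\ell_c^2$ by Lemma \ref{lem collision}, one has $\omega_{-1}-\omega_0=\rho\,(\ell^2-\ell_c^2)$ with an explicit nonzero $\rho=\rho(\xi)$, and the quantity under the root equals $4\Theta B\,a^2-\rho^2(\ell^2-\ell_c^2)^2+O(a^3)$. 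When $B>0$ this is positive for $|\ell^2-\ell_c^2|<\varepsilon_a(\xi)$ and negative otherwise, where $\varepsilon_a(\xi)=\frac{2\sqrt{\Theta B}}{|\rho|}|a|+O(a^2)$; matching $\Theta$ and $\rho$ reproduces formula \eqref{def epsilon_a} and gives statements (i) and (ii). When $B<0$ the expression is strictly negative for all admissible $\ell$, the pair stays on the imaginary axis, and together with $\sigma_1\subset i\mathbb{R}$ this settles the last case.

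The main obstacle is the explicit evaluation of $\beta$ and $\gamma$ and the verification that $\beta\gamma$ is a positive multiple of $B$. This requires carrying the perturbation expansion of the spectral basis through the nonlocal operators $(1-k^2(\partial_z+i\xi)^2)^{-1}$ and $(\partial_z+i\xi)^{-2}$, and tracking how the $O(a)$ correction of each eigenvector feeds back into the diagonal at order $a^2$; this bookkeeping is what turns the two one-sided couplings into the factorized expression \eqref{def B}. Proving that $\Theta$ is genuinely positive -- so that the sign of the discriminant is governed by $\operatorname{sgn}(B)$ alone -- is the crux, and is exactly where the opposite Krein signatures of mode $-1$ and mode $0$ (which make the collision at $\ell_c^2$ a candidate for instability in the first place) must be used.
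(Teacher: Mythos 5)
Your proposal follows essentially the same route as the paper: the decomposition of Lemma \ref{lem5.4} persisting for small $a$, positive Krein signatures forcing $\sigma_1\left(\mathcal{A}_a(\ell,\xi)\right)\subset i\mathbb{R}$, reduction to a $2\times 2$ matrix on the modes $n=-1,0$, and a discriminant analysis whose sign is governed by $B$. The one step you leave schematic --- the explicit off-diagonal couplings --- is precisely what the paper records in $M_a\left(\ell_c,\xi\right)$, namely the purely imaginary entries $-\frac{i}{2}\frac{(\xi-1)\left[k^2\xi^2+(1-b)k^2\xi+k^2+(b+1)\right]}{1+k^2(\xi-1)^2}a$ and $-\frac{i}{2}\frac{\xi\left[k^2\xi^2+(b-3)k^2\xi+(3-b)k^2+(b+1)\right]}{1+k^2\xi^2}a$, whose product is $\frac{\xi(1-\xi)B}{4(1+k^2\xi^2)\left[1+k^2(1-\xi)^2\right]}a^2$, confirming your $\beta\gamma=\Theta B$ with $\Theta>0$ and reproducing \eqref{def epsilon_a} exactly.
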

\begin{proof}The spectrum of $\mathcal{A}_a(\ell, \xi)$ can be decomposed as
$$
\sigma\left(\mathcal{A}_a(\ell, \xi)\right)=\sigma_0\left(\mathcal{A}_a(\ell, \xi)\right) \cup \sigma_1\left(\mathcal{A}_a(\ell, \xi)\right),
$$
where $\sigma_1\left(\mathcal{A}_a(\ell, \xi)\right)$ is purely imaginary, and $\sigma_0\left(\mathcal{A}_a(\ell, \xi)\right)$ consists of two eigenvalues which are the continuation of the eigenvalues $i \omega_{-1}(\ell, \xi)$ and $i \omega_0(\ell, \xi)$ for small $a$. Choosing $\varepsilon_*$ and $a_*$ sufficiently small, such decomposition persists for any $\ell_0^2 \le \ell^2 \leqslant \ell_c^2+\varepsilon_*$ and $|a| \leqslant a_*$. Therefore what remains to check are the location of the two eigenvalues in $\sigma_0\left(\mathcal{A}_a(\ell, \xi)\right)$.

From Lemma \ref{lem collision} we know that for any $\ell_0^2 \le \ell^2 \leqslant \ell_c^2+\varepsilon_*$ such that $\ell$ outside a neighborhood of $\ell_c$, the two eigenvalues $i \omega_{-1}(\ell, \xi)$ and $i \omega_0(\ell, \xi)$ are simple and there exists $c_0 > 0$ such that
$$
\left|i \omega_{-1}(\ell, \xi)-i \omega_0(\ell, \xi)\right| \geqslant c_0.
$$
For $a$ sufficiently small, the simplicity of this pair of eigenvalues continues to hold into the spectrum of $\mathcal{A}_a(\ell, \xi)$.
As the spectrum of $\mathcal{A}_a(\ell, \xi)$ is symmetric with respect to the imaginary axis, each of these eigenvalues of $\mathcal{A}_a(\ell, \xi)$ is purely imaginary for any $\ell$ outside some neighborhood of $\ell_c$.

We will proceed as in the proof of Theorem \ref{the4.1} to locate $\sigma_0\left(\mathcal{A}_a(\ell, \xi)\right)$ for $\ell$ close to $\ell_c$. We compute successively a basis for the two-dimensional spectral subspace associated with $\sigma_0\left(\mathcal{A}_a(\ell, \xi)\right)$, the $2 \times 2$ matrix $M_a(\ell, \xi)$ representing the action of $\mathcal{A}_a(\ell, \xi)$ on this basis, and the eigenvalues of this matrix.

At $a=0$, the basis vectors are chosen to be the two eigenvectors associated with the eigenvalues $i \omega_0(\ell, \xi)$ and $i \omega_{-1}(\ell, \xi)$,
$$
\xi_0^0(\ell, \xi)=1, \quad \xi_0^1(\ell, \xi)=e^{-i z}
$$
At order $a$, we take $\ell=\ell_c$, and proceed as the computation in the proof of Theorem \ref{the4.1} to find
\[
\begin{aligned}
M_a\left(\ell_c, \xi\right)=&\left(\begin{array}{cc}
i \omega_0\left(\ell_c, \xi\right) & -\frac{i}{2} \frac {(\xi-1)\left(k^2\xi^2+(1-b)k^2\xi+k^2+(b+1)\right)}{1+k^2(\xi-1)^2} a \\
-\frac{i}{2}\frac {\xi\left(k^2\xi^2+(b-3)k^2\xi+(3-b)k^2+(b+1)\right)}{1+k^2\xi^2} a & i \omega_{-1}\left(\ell_c, \xi\right)
\end{array}\right)\\
&+O\left(a^2\right).
\end{aligned}
\]
Together with the expression of $M_0(\ell,\xi)$, this yields that
\[
\begin{aligned}
M_a\left(\ell, \xi\right)=&\left(\begin{array}{cc}
i \omega_0\left(\ell_c, \xi\right)+i\frac{\varepsilon}{\xi(1+k^2\xi^2)} & -\frac{i}{2} \frac {(\xi-1)\left[k^2\xi^2+(1-b)k^2\xi+k^2+(b+1)\right]}{1+k^2(\xi-1)^2} a  \\\\
-\frac{i}{2}\frac {\xi\left[ k^2\xi^2+(b-3)k^2\xi+(3-b)k^2+(b+1)\right]}{1+k^2\xi^2} a & i \omega_{-1}\left(\ell_c, \xi\right)+i\frac{\varepsilon}{(\xi-1)\left[1+k^2(\xi-1)^2\right]}
\end{array}\right)\\
&+O\left(a^2 + |a\varepsilon| \right),
\end{aligned}
\]
with $\varepsilon=\ell^2-\ell_c^2$. Recall the definition of $\ell_c^2$ in \eqref{def l_c},
and the colliding eigenvalues
\[
\omega_0\left(\ell_c, \xi\right)=\omega_{-1}\left(\ell_c, \xi\right)
=\frac{2 \kappa k^2\xi(1-\xi)(1-2\xi)}{\left(1+k^2\right)\left\{ \left(1-\xi\right)\left[ 1+k^2(1-\xi)^2\right]
+\left(1+k^2\xi^2\right)\xi\right\} } =: \omega_*(\xi).
\]
Seeking eigenvalues $\lambda$ of the form
$$
\lambda = i \omega_*(\xi) + iX,
$$
we find that $X$ is root of the polynomial
\begin{align}
P(X)=&X^2-X\left(\frac{\varepsilon}{\xi(1+k^2\xi^2)}+\frac{\varepsilon}
{(\xi-1)\left(1+k^2(\xi-1)^2\right)}+O\left(a^2 + |a\varepsilon| \right)\right)\nonumber\\
&-\frac{a^2}{4} \left(\frac {(\xi-1)\left(k^2\xi^2+(1-b)k^2\xi+k^2+(b+1)\right)}{1+k^2(\xi-1)^2}\right)\nonumber\\
&\quad \ \cdot \left(\frac {\xi\left(k^2\xi^2+(b-3)k^2\xi+(3-b)k^2+(b+1)\right)}{1+k^2\xi^2}\right) \nonumber\\
&+\frac{\varepsilon^2}{\xi(\xi-1)(1+k^2\xi^2)\left(1+k^2(\xi-1)^2\right)}
+O\left(a^2 |\varepsilon| + |a| \varepsilon^2 + |a|^3 \right).\nonumber
\end{align}
A direct computation shows that the discriminant of this polynomial $P(X)$ is
\begin{equation*}
\begin{split}
\Delta_a(\varepsilon, \xi) = & \ \varepsilon^2 \LB \frac{1}{\xi(1 + k^2 \xi^2)} + \frac{1}{(1-\xi) \LB 1 + k^2(1-\xi)^2 \RB} \RB^2 \\
& \ - \frac{\xi (1 - \xi) B}{(1 + k^2 \xi^2) \LB 1 + k^2(1-\xi)^2 \RB} a^2 + O \LC |\varepsilon| a^2 + \varepsilon^2 |a| + |a|^3 \RC,
\end{split}
\end{equation*}
where $B$ is defined in \eqref{def B}.

In the case when $B>0$, we can define
\[
\varepsilon_a(\xi) := \xi^{\frac 3 2}(1-\xi)^{\frac 3 2}\frac{(1+k^2\xi^2)^\frac{1} {2}\left(1+k^2(1-\xi)^2\right)^\frac{1} {2}}{(1-\xi)\left[ 1+k^2(1-\xi)^2\right]+\xi(1+k^2\xi^2)}B^{\frac 1 2} \cdot |a| > 0.
\]
Therefore for any $a$ sufficiently small we have $\Delta_a(\varepsilon, \xi) \ge 0$ when $|\varepsilon| > \varepsilon_a(\xi)$ and $\Delta_a(\varepsilon, \xi) < 0$ when $|\varepsilon| < \varepsilon_a(\xi)$. This implies that, for $\ell_0^2 \le \ell^2 \le \ell_c^2 + \varepsilon_*$, the two eigenvalues of $\mathcal{A}_a(\ell, \xi)$ are purely imaginary when $\left|\ell^2-\ell_c^2\right| > \varepsilon_a(\xi)$, and complex, with opposite nonzero real parts when $\left|\ell^2-\ell_c^2\right|<\varepsilon_a(\xi)$.

In the case when $B < 0$, we have $\Delta_a(\varepsilon, \gamma)>0$, and this implies that the two eigenvalues of $\mathcal{A}_a(\ell, \xi)$ are purely imaginary for $\ell_0^2 \le \ell^2 \le \ell_c^2 + \varepsilon_*$.
\end{proof}
For $\xi\in(0,\frac 1 2]$, we list the cases of $B>0$ and $B < 0$ in the following lemma and the detailed discussion is given in the Appendix \ref{app B}.
\begin{lemma}\label{lem5.7}
$B>0$ is valid for the following cases:
\begin{enumerate}
  \item $-1\leq b\leq 3$;
  \item $b<-1$, $\xi=\frac 1 2$, $k^2\neq \frac {-4(1+b)}{7-2b}$;
  \item $b<-1$, $\xi\in(0,\frac 1 2)$, $k^2>\frac {-(1+b)}{\xi^2+(1-b)\xi+1}$ or $k^2<\frac {-(1+b)}{\xi^2+(b-3)\xi+(3-b)}$;
  \item $b>3$, $k^2\leq\frac {4} {b-3}$;
  \item $3<b\leq \frac 7 2$, $\xi=\frac 1 2$, $k^2> \frac {4} {b-3}$;
  \item $b> \frac 7 2$, $\xi=\frac 1 2$, $k^2>\frac {4} {b-3}$ and $k^2\neq\frac {-4(1+b)}{7-2b}$;
  \item $3<b\leq \frac {\xi^2-3\xi+3} {1-\xi}$, $\xi\in(0,\frac 1 2)$;
  \item $3<\frac {\xi^2-3\xi+3} {1-\xi}<b\leq\frac {\xi^2+\xi+1} {\xi}$, $\xi\in(0,\frac 1 2)$,  $\frac {4} {b-3}<k^2<\frac {-(1+b)}{\xi^2+(b-3)\xi+(3-b)}$;
  \item $b>\frac {\xi^2+\xi+1} {\xi}>3$, $\xi\in(0,\frac 1 2)$, $k^2>\frac {-(1+b)}{\left(\xi^2+(1-b)\xi+1\right)}$ or $\frac {4} {b-3}<k^2<\frac{-(1+b)}{\xi^2+(b-3)\xi+(3-b)}$.
\end{enumerate}

$B < 0$ when one of the following cases occurs:
\begin{enumerate}
  \item $b<-1$, $\xi\in(0,\frac 1 2)$, $\frac {-(1+b)}{\xi^2+(1-b)\xi+1} < k^2 < \frac {-(1+b)}{\xi^2+(b-3)\xi+(3-b)}$;
  \item $3<\frac {\xi^2-3\xi+3} {1-\xi}<b\leq\frac {\xi^2+\xi+1} {\xi}$, $\xi\in(0,\frac 1 2)$,  $k^2 > \frac {-(1+b)}{\xi^2+(b-3)\xi+(3-b)}$;
  \item $b>\frac {\xi^2+\xi+1} {\xi}>3$, $\xi\in(0,\frac 1 2)$, $\frac {-(1+b)}{\xi^2+(b-3)\xi+(3-b)} < k^2 < \frac {-(1+b)}{\left(\xi^2+(1-b)\xi+1\right)}$.
\end{enumerate}
\end{lemma}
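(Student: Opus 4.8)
The plan is to read off the sign of $B$ from its factored form. Writing
$$
B = P_1 P_2, \qquad P_i = \alpha_i(\xi,b)\,k^2 + (b+1) \quad (i = 1,2),
$$
with $\alpha_1 = \xi^2 + (1-b)\xi + 1$, $\alpha_2 = \xi^2 + (b-3)\xi + (3-b)$ and common intercept $b+1$, each factor is affine in $k^2$. Since $k^2 > 0$, the sign of $P_i$ over the admissible range is governed entirely by the pair $(\operatorname{sgn}\alpha_i, \operatorname{sgn}(b+1))$: the factor is one-signed when slope and intercept agree, and otherwise changes sign across the threshold $k^2 = -(b+1)/\alpha_i$, which is exactly $\frac{-(1+b)}{\xi^2+(1-b)\xi+1}$ for $i = 1$ and $\frac{-(1+b)}{\xi^2+(b-3)\xi+(3-b)}$ for $i = 2$. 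Hence $B > 0$ precisely when $P_1$ and $P_2$ carry the same sign and $B < 0$ when they differ, and the whole lemma reduces to a sign bookkeeping for the two factors.

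First I would determine the sign changes of the two slopes. Viewed as affine functions of $b$, $\alpha_1$ vanishes along $b = b_1(\xi) := \frac{\xi^2+\xi+1}{\xi}$ and $\alpha_2$ along $b = b_2(\xi) := \frac{\xi^2-3\xi+3}{1-\xi}$, with $\alpha_1 > 0 \Leftrightarrow b < b_1(\xi)$ and $\alpha_2 > 0 \Leftrightarrow b < b_2(\xi)$. A short computation gives the ordering $3 < b_2(\xi) \le \frac72 \le b_1(\xi)$ on $(0,\frac12]$, with equalities only at $\xi = \frac12$, where $\alpha_1 = \alpha_2 = \frac{7-2b}{4}$ and the two factors coincide. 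Both slopes share the discriminant $(b-3)(b+1)$ in $\xi$, which shows immediately that $\alpha_1, \alpha_2 > 0$ throughout $(0,\frac12]$ whenever $-1 \le b \le 3$; together with $b+1 \ge 0$ this forces $P_1, P_2 > 0$, so $B > 0$ in the regime $-1 \le b \le 3$.

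A clean route to the remaining $k^2$-thresholds is to regard instead $P_1, P_2$ as quadratics in $\xi$ with common positive leading coefficient $k^2$ and common discriminant $D := k^2(b+1)\big[(b-3)k^2 - 4\big]$. When $D < 0$ both factors are strictly positive for every $\xi$, so $B > 0$; this occurs for $-1 < b \le 3$ and for $b > 3$ with $k^2 \le \frac{4}{b-3}$, explaining the appearance of $\frac{4}{b-3}$. When $D > 0$ I would run the finer analysis at fixed $\xi \in (0,\frac12)$, subdividing according to $b < -1$ (both slopes positive, intercept negative, two thresholds), $3 < b \le b_2(\xi)$ (both slopes positive, intercept positive), $b_2(\xi) < b \le b_1(\xi)$ (slopes of opposite sign), and $b > b_1(\xi)$ (both slopes negative), in each region reading off $\operatorname{sgn}B$ from the thresholds $-(b+1)/\alpha_i$; this produces the remaining $B > 0$ regimes together with all three $B < 0$ regimes. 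The endpoint $\xi = \frac12$ must be handled separately: there $P_1 = P_2$, so $B = P_1^2 \ge 0$ is never negative and vanishes only at $k^2 = \frac{-4(1+b)}{7-2b}$, a positive value exactly when $b < -1$ or $b > \frac72$, which accounts for the exclusions $k^2 \ne \frac{-4(1+b)}{7-2b}$ and for the absence of any $B < 0$ regime at $\xi = \frac12$.

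I expect the main obstacle to be organizational rather than conceptual: one must order the two moving breakpoints $b_1(\xi), b_2(\xi)$ against the fixed values $-1, 3, \frac72$, order the two $k^2$-thresholds $-(b+1)/\alpha_1$ and $-(b+1)/\alpha_2$ against each other (their order is fixed by the sign of $\alpha_1 - \alpha_2 = (b-2)(1-2\xi)$) and against $\frac{4}{b-3}$, and then check that the enumerated regions tile the parameter space consistently. The only place where the factor-by-factor argument genuinely degenerates is the boundary $\xi = \frac12$, where the two slopes and hence the two thresholds merge, and this is precisely the case that requires the separate $B = P_1^2$ argument above.
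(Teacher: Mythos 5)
Your proposal is correct and takes essentially the same approach as the paper: the Appendix \ref{app B} proof rewrites each factor by completing the square in $\xi$, so that the common additive term $(1+b)\bigl(\tfrac{k^2}{4}(3-b)+1\bigr)$ is exactly $-D/(4k^2)$ in your notation, and then performs the same factor-by-factor sign bookkeeping with the thresholds $k^2=-(1+b)/\alpha_i$, the same breakpoints $b_1(\xi)=\tfrac{\xi^2+\xi+1}{\xi}$, $b_2(\xi)=\tfrac{\xi^2-3\xi+3}{1-\xi}$, and the same separate treatment of $\xi=\tfrac12$, where the two factors coincide and $B$ is a perfect square. One genuinely useful by-product of your explicit ordering step $\alpha_1-\alpha_2=(b-2)(1-2\xi)$: for $b<-1$ and $\xi\in(0,\tfrac12)$ it gives $\alpha_1<\alpha_2$, hence $\tfrac{-(1+b)}{\xi^2+(b-3)\xi+(3-b)}<\tfrac{-(1+b)}{\xi^2+(1-b)\xi+1}$, so the true $B<0$ region is $\tfrac{-(1+b)}{\xi^2+(b-3)\xi+(3-b)}<k^2<\tfrac{-(1+b)}{\xi^2+(1-b)\xi+1}$ (e.g.\ $b=-5$, $\xi=\tfrac14$, $k^2=1$ gives $B\approx-2.96$); the lemma's first $B<0$ case, and line (II-2-ii) of the paper's proof, state this interval with the endpoints transposed, which is empty as written, even though the paper's own (correct) $B>0$ case (3) identifies its complement as precisely the interval above. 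The only other point to patch in writing up your argument is the boundary $k^2=\tfrac{4}{b-3}$, where $D=0$ rather than $D<0$: there each factor degenerates to $k^2\bigl(\xi+\tfrac{1-b}{2}\bigr)^2$ or $k^2\bigl(\xi+\tfrac{b-3}{2}\bigr)^2$ with double roots $\tfrac{b-1}{2}$ and $\tfrac{3-b}{2}$ lying outside $(0,\tfrac12]$, so $B>0$ persists, matching item (4) of the lemma.
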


\subsection{Discussion for long-wavelength transverse perturbations $0 < \ell^2 < \ell_0^2$}\label{subsec long}

In contrast to case when {$\ell^2 \ge \ell_0^2$}, for long-wavelength transverse perturbations $0 < \ell^2 < \ell_0^2$, the collision dynamics of the eigenvalues become much harder to track. In fact, it is possible that infinitely many pairs of eigenvalues collide with each other. What we find is that, in order to eliminate these collisions, an additional condition on the longitudinal wavelength is needed. The detailed discussion is provided below.

The collision between $\omega_{-1}$ and $\omega_n$ is studied in Lemma \ref{lem collision}. So we consider the collision between $\omega_0$ and $\omega_n$ described by the function
\begin{equation}\label{eq coll func}
\begin{split}
\tilde F_n(\ell^2, \xi) := & \ \omega_n(\ell,\xi) - \omega_{0}(\ell, \xi) \\
= & \ \frac{\kappa k^2}{1 + k^2} \LB \frac{\xi \LC 1 - \xi^2 \RC}{1 + k^2 \xi^2} + \frac{(n + \xi) \LB (n + \xi)^2 - 1 \RB}{1 + k^2 (n + \xi)^2} \RB + \\
& \ \ell^2 \LB -\frac{1}{\xi \LC 1 + k^2 \xi^2 \RC} + \frac{1}{(n + \xi) \LB 1 + k^2(n + \xi)^2 \RB} \RB \\
=: & \ \tilde G_n(\xi) + \ell^2 \tilde H_n(\xi).
\end{split}
\end{equation}
We can also write $\tilde F_n(\ell^2, \xi)$ as
\begin{equation}\label{eq coll func another}
\tilde F_n(\ell^2, \xi) = \frac{\ell_0^2 - \ell^2}{\xi \LC 1 + k^2 \xi^2 \RC} + \frac{{\alpha} (n + \xi)^2 \LB (n + \xi)^2 - 1 \RB + \ell^2}{(n + \xi) \LB 1 + k^2(n + \xi)^2 \RB},
\end{equation}
where ${\alpha} := \frac{\kappa k^2}{1 + k^2}$. The first term in the right-hand side of \eqref{eq coll func another} is positive in the range of $\ell$ considered here. For $n \ge 1$, the second term is clearly positive. So we consider the case when $n \le -2$. Define the function
\[
g(x) := \frac{\ell^2 + {\alpha} x^2(x^2 - 1)}{x(1+k^2 x^2)} \qquad \text{for} \quad x \le -\frac32.
\]
Then from the estimate that
\[
\ell^2 < \ell_0^2 = {\alpha} \xi^2 (1 - \xi^2), \quad \text{and} \qquad \xi^2 (1 - \xi^2) < \frac14,
\]
we have
\begin{align*}
g'(x) = & \ \frac{{\alpha} \LC k^2x^6 + k^2x^4 + 3x^4 - x^2 \RC - \ell^2(3k^2 x^2 + 1)}{x^2(1+k^2 x^2)^2} \\
\ge & \ \frac{{\alpha} \LB k^2x^6 + k^2x^4 + 3x^4 - x^2 - \xi^2 (1 - \xi^2)(3k^2 x^2 + 1) \RB}{x^2(1+k^2 x^2)^2} > 0, \quad \text{for} \quad x \le -\frac32.
\end{align*}
Therefore we see that the second term in the last equality of \eqref{eq coll func another} is increasing in $n$ for $n \le -2$, that is, $\tilde F_n(\ell^2,\xi) \le \tilde F_{-2}(\ell^2,\xi)$ for $n\le -2$.

Looking at \eqref{eq coll func} we find the $\tilde H_n(\xi) < 0$ when $n \le -2$. Explicit computation reveals that
\[
\tilde G_{-2}(\xi) = \frac{2 \alpha (1 - \xi)^2 \LB k^2 \xi (2 - \xi) - 3 \RB}{(1 + k^2\xi^2) \LB 1 + k^2(\xi - 2)^2 \RB}.
\]
In fact we have
\[
\frac12 \tilde F_{-2}(\ell^2,\xi) = \frac{\LC 1 + k^2 \LB 1 + 3(1 - \xi)^2 \RB \RC \ell^2 - \LB k^2 \xi (2 - \xi) - 3 \RB \ell_-^2 }{\xi (\xi - 2)(1 + k^2\xi^2) \LB 1 + k^2(\xi - 2)^2 \RB}.
\]
Therefore we have
\[
k^2 \xi (2 - \xi) - 3 \le 0 \quad \Rightarrow \quad \tilde F_{-2}(\ell^2,\xi) < 0 \quad \Rightarrow \quad \tilde F_{n}(\ell^2,\xi) < 0 \text{ for all } n \le -2.
\]
Thus we find that for
\begin{equation}\label{eq no coll}
k^2 \le \frac{3}{\xi (2 - \xi)}, \quad \Rightarrow \quad \omega_{0}(\ell, \xi) \ne \omega_{n}(\ell, \xi) \ \text{ for all } n \ne 0, -1.
\end{equation}

To summarize, we have the following
\begin{lemma}\label{lem coll longwave}
{Assume that $\xi \in\left(0, \frac{1}{2}\right]$ and $0 < \ell^2 < \ell_0^2$.} For $k^2 \le 4$ we have $\omega_{0}(\ell, \xi) \ne \omega_{n}(\ell, \xi)$ for all $n \ne 0$ and {for $k^2 \le 3$ we have $\omega_{-1}(\ell, \xi) \ne \omega_{n}(\ell, \xi)$ for all $n \ne -1$}.
\end{lemma}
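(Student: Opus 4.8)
The plan is to assemble the collision estimates derived just above the lemma and to uniformize their $\xi$-dependent thresholds, after first pinning down the signs of the $\omega_n(\ell,\xi)$ in the long-wavelength window $0 < \ell^2 < \ell_0^2$, which differ from the finite-wavelength case of Lemma \ref{lem collision}. First I would note that here $\mu_0 = (\ell^2 - \ell_0^2)/\xi^2 < 0$ and $\mu_{-1} = (\ell^2 - \ell_-^2)/(1-\xi)^2 < 0$ (using $\ell_0^2 < \ell_-^2$), while $\mu_n \ge 0$ whenever $|n+\xi| \ge 1$. Feeding these into \eqref{omega mu} gives the sign pattern $\omega_{-1} > 0$, $\omega_0 < 0$, $\omega_n \ge 0$ for $n \ge 1$, and $\omega_n \le 0$ for $n \le -2$. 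The upshot is that the strictly negative value $\omega_0$ can only meet $\omega_n$ with $n \le -2$, while the strictly positive $\omega_{-1}$ can only meet $\omega_n$ with $n \ge 1$; all other pairings are separated by sign.

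For the $\omega_0$ assertion I would invoke the computation of $\tilde F_{-2}(\ell^2,\xi)$ and the implication \eqref{eq no coll}: when $k^2 \le 3/[\xi(2-\xi)]$ one has $\tilde F_n(\ell^2,\xi) < 0$ for every $n \le -2$, so $\omega_0$ collides with none of these modes, and the sign pattern rules out the remaining $n$. To make the condition $\xi$-free, observe that $\xi(2-\xi)$ increases on $(0,\tfrac12]$ and is therefore at most $\tfrac34$, so $3/[\xi(2-\xi)] \ge 4$; hence $k^2 \le 4$ secures the hypothesis for all $\xi \in (0,\tfrac12]$ and yields $\omega_0 \ne \omega_n$ for all $n \ne 0$.

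For the $\omega_{-1}$ assertion I would reuse the splitting $F_n = G_n + \ell^2 H_n$ from Lemma \ref{lem collision}. For $n \ge 2$ the monotonicity of $f$ together with $f(2) > 0$ gives $G_n > 0$, and $H_n > 0$, so $F_n > 0$ and no collision can occur for any $\ell$. For $n = 1$, the explicit $G_1(\xi)$ is nonnegative exactly when $k^2 \le 3/(1-\xi^2)$, and since $H_1 > 0$ this forces $F_1 > 0$. Uniformizing again, $3/(1-\xi^2) > 3$ throughout $(0,\tfrac12]$ (the infimum $3$ being approached as $\xi \to 0$), so $k^2 \le 3$ guarantees $G_1 > 0$ and hence $F_1 > 0$, removing the last candidate collision and giving $\omega_{-1} \ne \omega_n$ for all $n \ne -1$.

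The computations themselves are purely algebraic and are already recorded in the paragraphs preceding the lemma, so the only real care is conceptual: because $\omega_0$ changes sign in passing from $\ell^2 \ge \ell_0^2$ to $\ell^2 < \ell_0^2$, the set of modes that could possibly collide with $\omega_0$ and with $\omega_{-1}$ must be reidentified before the earlier estimates are applied. The remaining point is the elementary optimization of $\xi(2-\xi)$ and $1-\xi^2$ over $(0,\tfrac12]$ that converts the pointwise thresholds into the clean bounds $k^2 \le 4$ and $k^2 \le 3$.
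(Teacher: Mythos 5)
Your proposal is correct and follows essentially the same route as the paper: it combines the pre-lemma estimate \eqref{eq no coll} for the modes $n\le -2$, the $n\ge 1$ analysis of $G_n$ and $G_1$ from Lemma \ref{lem collision}, and the same elementary optimization $\xi(2-\xi)\le \tfrac34$ and $1-\xi^2<1$ on $\bigl(0,\tfrac12\bigr]$ to obtain the uniform thresholds $k^2\le 4$ and $k^2\le 3$. The only (harmless) variation is that you exclude the $\omega_0$ versus $\omega_{-1}$ pairing by the sign pattern derived from \eqref{omega mu} in the regime $\ell^2<\ell_0^2$, where the paper instead cites the fact that this collision occurs only at $\ell^2=\ell_c^2\ge \ell_0^2$; both are valid one-line observations.
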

\begin{proof}
The above discussion leads to \eqref{eq no coll}. Since $0 < \xi \le \frac12$, we know that $\frac{3}{\xi (2 - \xi)} \ge 4$ {and $\frac{3}{1 - \xi^2}> 3$}. Moreover, the calculation in Lemma \ref{lem collision} indicates that $\omega_{0}(\ell, \xi) = \omega_{-1}(\ell, \xi)$ only for $\ell^2 = \ell_c^2 \ge \ell_0^2$ {and for $0 < \ell^2 < \ell_0^2$ and $k^2\leq \frac{3}{1 - \xi^2}$, $\omega_{-1}(\ell, \xi) \ne \omega_{n}(\ell, \xi)$ for all $n \ne -1$}. Thus the proof of the lemma is complete.
\end{proof}

From this lemma we may extend the stability part of Theorem \ref{the5.1} to the regime of long-wavelength transverse perturbation, provided that the longitudinal wavelength is bounded below.
\begin{proposition}\label{prop bdd stab}
If {$k^2 \leq 3$}, then the results in \ref{B>0} and \ref{B<0} parts of Theorem \ref{the5.1} hold for all $\ell^2>0$.
\end{proposition}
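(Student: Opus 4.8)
The plan is to reduce the proposition to the single remaining window $0 < \ell^2 < \ell_0^2$, since Theorem \ref{the5.1} already establishes both dichotomies \ref{B>0} and \ref{B<0} for $\ell^2 \ge \ell_0^2$, and then to show that the hypothesis $k^2 \le 3$ forces the entire analysis behind Theorem \ref{the5.1} to persist verbatim on this window. The guiding principle is that the only collision capable of producing a departure from the imaginary axis anywhere in $\ell^2 > 0$ is the one between $\omega_{-1}$ and $\omega_0$ at $\ell^2 = \ell_c^2$; every other collision is harmless, and $k^2 \le 3$ is precisely what prevents new harmful collisions from appearing once $\ell^2$ drops below $\ell_0^2$.

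First I would record the Krein signatures on $0 < \ell^2 < \ell_0^2$. Since $\ell_0^2 < \ell_-^2$, both $\mu_{-1}(\ell,\xi)$ and $\mu_0(\ell,\xi)$ are negative here, while $\mu_n(\ell,\xi) > 0$ for every $n \ne -1,0$ by the first bullet preceding Lemma \ref{lem5.3}; hence $i\omega_{-1}$ and $i\omega_0$ are the only eigenvalues of $\mathcal{A}_0(\ell,\xi)$ with negative Krein signature, all others having $K_{n,\xi} = +1$. I would then invoke Lemma \ref{lem coll longwave}: for $k^2 \le 3$ neither $\omega_0$ nor $\omega_{-1}$ collides with any $\omega_n$, $n \ne 0,-1$, throughout $0 < \ell^2 < \ell_0^2$, and by Lemma \ref{lem collision} the equality $\omega_{-1} = \omega_0$ occurs only at $\ell^2 = \ell_c^2 \ge \ell_0^2$. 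Consequently the pair $\{i\omega_{-1}, i\omega_0\}$ stays uniformly isolated from the positive-Krein remainder for all $0 < \ell^2 < \ell_0^2$, so the spectral decomposition $\sigma(\mathcal{A}_0(\ell,\xi)) = \{i\omega_{-1}, i\omega_0\} \cup \sigma_1$ of Lemma \ref{lem5.4} extends to this window with a uniform gap.

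With this decomposition in hand the argument splits exactly as in Section \ref{subsec bddinstab}. Using the uniform bound $\|\mathcal{A}_a(\ell,\xi) - \mathcal{A}_0(\ell,\xi)\|_{H^1(\mathbb{T}) \to L^2(\mathbb{T})} = O(|a|)$, the decomposition persists for $\mathcal{A}_a(\ell,\xi)$ with $|a|$ small. The remainder $\sigma_1$ carries only positive Krein signature, so, just as in the treatment of $\sigma_1(\mathcal{A}_a(\ell,\xi))$ in the proof of Theorem \ref{the4.1}, every collision there is between like-signed eigenvalues and $\sigma_1(\mathcal{A}_a(\ell,\xi))$ remains purely imaginary. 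For the two-dimensional part $\sigma_0$, the only collision of $\omega_{-1}$ and $\omega_0$ sits at $\ell^2 = \ell_c^2$, so the $2 \times 2$ reduction and the discriminant computation of Lemma \ref{lem5.6} — which are analytic in $\ell^2$ near $\ell_c$ and never used $\ell^2 \ge \ell_0^2$ beyond the decomposition — apply in a full two-sided neighborhood of $\ell_c^2$, yielding the same threshold $\varepsilon_a(\xi)$ from \eqref{def epsilon_a} and the same $B > 0$ / $B < 0$ dichotomy. For $\xi \in (0,\tfrac12)$ one has $\ell_c^2 > \ell_0^2$, so the whole window lies at distance $\ell_c^2 - \ell_0^2 > \varepsilon_a(\xi)$ from $\ell_c^2$ and falls entirely in the stable branch \ref{bdd stab finite}; patching with Theorem \ref{the5.1} then gives the conclusion for all $\ell^2 > 0$.

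The main obstacle is the twofold uniformity question as $\ell \to 0^+$ together with the borderline exponent $\xi = \tfrac12$. One must verify that the spectral gap separating $\{i\omega_{-1}, i\omega_0\}$ from $\sigma_1$, and hence the radius of validity of the perturbation, do not degenerate as $\ell \to 0$; this is exactly the quantitative content of Lemma \ref{lem coll longwave}, namely that the collision functions stay bounded away from zero on the entire window once $k^2 \le 3$. The genuinely delicate case is $\xi = \tfrac12$, where $\ell_c^2 = \ell_0^2 = \ell_-^2$ and the instability window $|\ell^2 - \ell_c^2| < \varepsilon_a(\xi)$ straddles $\ell_0^2$; there the two eigenvalues collide at the origin, the Krein signatures of $i\omega_{-1}$ and $i\omega_0$ both vanishing at $\ell^2 = \ell_0^2$, and one must check directly that the discriminant analysis of Lemma \ref{lem5.6} remains valid on the lower side $\ell^2 < \ell_0^2$ so that the dichotomy extends symmetrically across $\ell_0^2$.
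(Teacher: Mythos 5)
Your proposal is correct and takes essentially the same route as the paper, whose (implicit) proof of Proposition \ref{prop bdd stab} likewise combines Lemma \ref{lem coll longwave} (no collisions of $\omega_{-1}$, $\omega_0$ with any other $\omega_n$ on $0<\ell^2<\ell_0^2$ when $k^2\le 3$, plus Lemma \ref{lem collision} placing $\omega_{-1}=\omega_0$ only at $\ell^2=\ell_c^2\ge\ell_0^2$) with the spectral decomposition, Krein-signature argument, and the $2\times 2$ discriminant analysis of Lemmas \ref{lem5.4}--\ref{lem5.6}, then patches with Theorem \ref{the5.1} on $\ell^2\ge\ell_0^2$. Your closing observations---uniformity of the spectral gap as $\ell\to 0$ and the borderline case $\xi=\tfrac12$, where $\ell_c^2=\ell_0^2$ and the discriminant computation, being analytic in $\varepsilon=\ell^2-\ell_c^2$, is valid on both sides of $\ell_c^2$---are consistent with, and in fact slightly more explicit than, the paper's own discussion.
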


%

\appendix

\section{Small-amplitude expansion}\label{app A}
In this section, we give the details on  small-amplitude expansion of \eqref{2.1}. Since $w$ and $c$ depend analytically on $a$ for $|a|$ sufficiently small and
since $c$ is even in $a$, we write that
$$
w(k, a)(z):= a \cos z+a^2 w_2(z)+a^3 w_3(z)+O\left(a^4\right)
$$
and
$$
c(k, a):= c_0(k)+a^2 c_2+O\left(a^4\right)
$$
as $a\rightarrow0$, where $w_2, w_3,$ . . . are even and $2\pi$-periodic in $z$. Substituting these into \eqref{2.6}, at the order of $a^2$, we gather that
\[
\left(\kappa -c_0\right) w_2+\frac{\kappa}{1+k^2} k^2 \partial_z^2 w_2+\frac{(b+1)}{2} \cos ^2z+k^2 \cos^2 z-\frac{(b-1)}{2}k^2\sin ^2z=0,
\]
which is equivalent to
\begin{align}
\frac{ \kappa k^2}{1+k^2} w_2+\frac{\kappa k^2}{1+k^2} \partial^2_z w_2&=\frac{(b-1)}{2} k^2\sin ^2 z-\left(\frac{(b+1)}{2}+k^2\right) \cos ^2 z\nonumber\\
&=\frac{(b-1)}{4}k^2(1-\cos 2 z)-\left(\frac{(b+1)+2k^2}{4}\right)\left(\cos 2z+1\right).\nonumber
\end{align}
Then we get that
\begin{equation}\label{2.16}
\frac{\kappa k^2}{1+k^2}( w_2+\partial^2_z w_2)
=-\frac{(b+1)+(3-b)k^2}{4}-\frac{(b+1)(1+k^2)}{4}\cos 2z.
\end{equation}
A straightforward calculation then reveals that
\begin{equation}\label{2.17}
w_2=\frac{\left(1+k^2\right)}{4\kappa k^2}\left[\frac{(b+1)(k^2+1)}{3} \cos 2z+\left((b-3)k^2+(b+1)\right)\right].
\end{equation}
At the order of $a^3$,
\[
\begin{aligned}
& \left(\kappa -c_0\right) w_3-c_2 \cos z+c_0 k^2 \partial_z^2 w_3-c_2 k^2 \cos z+{(b+1)}\cos z w_2 \\
& -k^2\left(\cos z \partial_z^2 w_2-w_2 \cos z\right)+{(b-1)}k^2 \sin z \partial_z w_2=0,
\end{aligned}
\]
which is equivalent to
\begin{equation}\label{2.18}
\begin{aligned}
\frac{\kappa k^2}{1+k^2}\left(w_3+\partial_z^2 w_3\right)= & -{(b+1)} \cos z w_2+c_2\left(1+ k^2\right) \cos z\\
& +k^2\cos z \left(\partial_z^2 w_2-w_2 \right)-{(b-1)}k^2 \sin z \partial_z w_2.
\end{aligned}
\end{equation}
From \eqref{2.17}, we get that
\[
\partial_z^2w_2-w_2=\frac{1+k^2}{4 \kappa k^2}\left[-\frac{5(b+1)}{3}\left(k^2+1\right)\cos2z
+\left((3-b)k^2+(b+1)\right)\right],
\]
which helps us to get that
\[
\begin{aligned}
&k^2 \cos z\left(\partial_z^2 w_2-w_2\right)-(b-1)k^2 \sin z \p_z w_2\\
&=k^2 \frac{1 + k^2}{2\kappa k^2}\left[-\frac{5(b+1)}{6}\left(k^2+1\right) \cos 2z \cos z+\left(\frac{(3-b)k^2+(b+1)}{2}\right) \cos z \right.\\
&\quad\left.+ \frac{b^2-1}{3}\left(k^2+1\right) \sin z \sin 2 z\right]\\
&=\frac{1+k^2}{2\kappa}\left[\frac{b^2-1}{3}\left(k^2+1\right)\cos (2 z-z)-\frac{(b+1)(2b+1)}{6}\left(k^2+1\right) \cos 2 z \cos z\right.\\
&\quad\left.+\left(\frac{(3-b)k^2+(b+1)}{2}\right) \cos z\right]\\
&= \frac{1+k^2}{2 \kappa }[\left(3k^2+5\right)\cos z-7\left(k^2+1\right)\cos 2 z \cos z].
\end{aligned}
\]
Then the right hand side of \eqref{2.18} equals to
\[
\begin{aligned}
&\frac{1+k^2}{2\kappa}  {\left[\frac{k^2(2b^2-3b+7)+(2b+1)(b+1)}{6} \cos z-\frac{(b+1)(2b+3)}{6}\left(k^2+1\right) \cos 2z \cos z \right.} \\
& \qquad\quad\left.-\frac{(b+1)^2}{6k^2}\left(k^2+1\right) \cos 2 z \cos z-\frac{(b+1)\left(b(k^2-1)-(3k^2+1)\right)}{2k^2} \cos z+2 \kappa c_2 \cos z\right]\\
&=-\frac{1+k^2}{2\kappa}\left[\frac{b+1}{6}\left((2b+3)+\frac{(b+1)}{k^2}
\right)\left(k^2+1\right) \cos2z  \cos z\right.\\
&\quad\left.-\left(\frac{k^4(2b^2-3b+7)-k^2(b+1)(b-10)+3(b+1)^2}{6k^2}+2\kappa c_2\right) \cos z\right].
\end{aligned}
\]
Noting that
\[
\cos 3\alpha=2\cos 2\alpha \cos\alpha-\cos\alpha,
\]
we choose
\begin{equation}\label{2.19}
c_2=\frac{1}{\kappa}\left(\frac {-2b^2+11b-11}{24}k^2 +\frac {5b^2-11b-16}{24}-\frac{5(b+1)^2}{24k^2}\right).
\end{equation}
Then \eqref{2.18} gives
\[
\frac{\kappa k^2}{1+k^2}\left(w_3+\partial_z^2 w_3\right)= -\frac{1+k^2}{\kappa}
\frac {\left(k^2+1\right)(b+1)}{24}\left((2b+3)+\frac{b+1}{k^2}\right)\cos 3z,
\]
which is equivalent to
\[
\left(w_3+\partial_z^2 w_3\right)= -\frac{(b+1)\left(k^2+1\right)^3\left((2b+3) k^2+(b+1)\right)}{24 \kappa^2 k^4}\cos 3z.
\]
A straightforward calculation then reveals that
\begin{equation}\label{2.20}
w_3= \frac{(b+1)\left(k^2+1\right)^3}{192 \kappa^2 k^4}\left((2b+3) k^2+(b+1)\right)\cos 3z.
\end{equation}

\section{Proof of Lemma \ref{lem4.4} and Lemma \ref{lem5.7}}\label{app B}
\begin{proof}[Proof of Lemma \ref{lem4.4}]Note that
{ {
\[
\ell_a^2=
\left((b+1)+(7-2b)k^2\right)\frac{(b+1)\left(1+k^2\right)^2}{12\kappa k^2}a^2.
\]
}}
Now we discuss $b$ in detail.

(1) $(b+1)>0$

(1a) If $b\leq\frac 7 2$, i.e. $-1< b\leq\frac 7 2$, we have $\ell_a^2>0$.

(1b) If $b>\frac 7 2$, we have $\ell_a^2>0$ for $k^2< \frac {b+1} {2b-7}$ and $\ell_a^2\leq0$ for
$k^2\geq \frac {b+1} {2b-7}$.

(2) $(b+1)<0$

(2a) If $k^2< \frac{b+1} {2b-7}$, we have $\ell_a^2>0$.

(2b) If $k^2\geq \frac {b+1} {2b-7}$, we have $\ell_a^2 \leq0$.

(3) $(b+1)=0$, $\ell_a^2=0$.
\end{proof}
\begin{proof}[Proof of Lemma \ref{lem5.7}] We can rewrite $B$ as
\[
\begin{aligned}
B=&\left[ k^2\xi^2+(1-b)k^2\xi+k^2+(b+1)\right]
\left[k^2\xi^2+(b-3)k^2\xi+(3-b)k^2+(b+1)\right] \\
=&\left[ k^2\left(\xi+\frac {(1-b)}{2}\right)^2+\left(1+b\right)\left(\frac {k^2}{4}(3-b)+1\right)\right] \\
&\ \cdot \left[ k^2\left(\xi+\frac {(b-3)}{2}\right)^2+\left(1+b\right)\left(\frac {k^2}{4}(3-b)+1\right)\right].
\end{aligned}
\]

(I) In the case of $-1\leq b\leq 3$, $\left(1+b\right)\left(\frac {k^2}{4}(3-b)+1\right)\geq0$ and thus $B>0$.

(II) In the case of $b<-1$, we have $\left(1+b\right)\left(\frac {k^2}{4}(3-b)+1\right)<0$. Then

(II-1) for $\xi=\frac 1 2$, we have $B=\left(\frac {(7-2b)} {4} k^2+(1+b)\right)^2$, then

(II-1-i) for $k^2=\frac {-4(1+b)}{7-2b}$, we have $B=0$.

(II-1-ii) for $k^2\neq \frac {-4(1+b)}{7-2b}$, we have $B>0$.

(II-2) for $\xi\in(0,\frac 1 2)$, we have $|\xi+\frac {(1-b)}{2}|<|\xi+\frac {(b-3)}{2}|$, and thus

(II-2-i) for $k^2>\frac {-\left(1+b\right)\left(\frac {k^2}{4}(3-b)+1\right)} {\left(\xi+\frac {(1-b)}{2}\right)^2}$ or $k^2<\frac {-\left(1+b\right)\left(\frac {k^2}{4}(3-b)+1\right)} {\left(\xi+\frac {(b-3)}{2}\right)^2}$, i.e. $k^2>\frac {-(1+b)}{\xi^2+(1-b)\xi+1}$ or $k^2<\frac {-(1+b)}{\xi^2+(b-3)\xi+(3-b)}$ , we have  $B>0$.

(II-2-ii)  for $\frac {-(1+b)}{\xi^2+(1-b)\xi+1} < k^2 < \frac {-(1+b)}{\xi^2+(b-3)\xi+(3-b)}$, we have  $B < 0$.

\medskip

(III) In the case of $b>3$, we have

(III-1) for $k^2\leq\frac {4} {b-3}$, we have $\left(1+b\right)\left(\frac {k^2}{4}(3-b)+1\right)\geq0$ and thus $B>0$.

(III-2) for $k^2> \frac {4} {b-3}$, we have $\left(1+b\right)\left(\frac {k^2}{4}(3-b)+1\right)<0$, and thus we can proceed a similar discussion as in (II).

(III-2-i) for $\xi=\frac 1 2$, we have $B=\left(\frac {(7-2b)} {4} k^2+(1+b)\right)^2$, then

(III-2-i-a) for $3<b\leq \frac 7 2$, we have $B>0$.

(III-2-i-b) for $b> \frac 7 2$, we have $B=0$ for $k^2=\frac {-4(1+b)}{7-2b}>\frac {4} {b-3}$ and $B>0$ for $k^2>\frac {4} {b-3}$ and $k^2\neq\frac {-4(1+b)}{7-2b}$.

(III-2-ii) for $\xi\in(0,\frac 1 2)$, we have $\frac {\xi^2+\xi+1} {\xi}>\frac {\xi^2-3\xi+3} {1-\xi}>3$, then

(III-2-ii-a) for $3<b\leq \frac {\xi^2-3\xi+3} {1-\xi}$, we have
\[
0\leq\left(\xi^2+(b-3)\xi+(3-b)\right)<\left(\xi^2+(1-b)\xi+1\right),
\]
then $B>0$.

(III-2-ii-b) for $\frac {\xi^2-3\xi+3} {1-\xi}<b\leq\frac {\xi^2+\xi+1} {\xi}$, we have
\[
\left(\xi^2+(b-3)\xi+(3-b)\right)<0\leq \left(\xi^2+(1-b)\xi+1\right),
\]
then we have $B>0$ for $\frac {4} {b-3}<k^2<\frac {-(1+b)}{\xi^2+(b-3)\xi+(3-b)}$ and $B < 0$ for $k^2 > \frac {-(1+b)}{\xi^2+(b-3)\xi+(3-b)}$.

(III-2-ii-c) for $b>\frac {\xi^2+\xi+1} {\xi}$, we have
\[
\left(\xi^2+(b-3)\xi+(3-b)\right)<\left(\xi^2+(1-b)\xi+1\right)<0,
\]
then we have $B>0$ for $k^2>\frac {-(1+b)}{\left(\xi^2+(1-b)\xi+1\right)}$ or $\frac {4} {b-3}<k^2<\frac{-(1+b)}{\xi^2+(b-3)\xi+(3-b)}$, and $B < 0$ for $\frac {-(1+b)}{\xi^2+(b-3)\xi+(3-b)} < k^2 < \frac{-(1+b)}{\left(\xi^2+(1-b)\xi+1\right)}$.
\end{proof}
\vspace{0.5cm}
\noindent {\bf Acknowledgements}
The work of LF is partially supported by a NSF of Henan Province of China Grant No. 222300420478, the research of RMC is supported in part by the NSF through DMS-2205910, the research of XCW is supported by the National Natural Science Foundation of China No. 12301271, and the research of RZX is supported by the National Natural Science Foundation of China No. 12271122.

\vspace{0.5cm}
\noindent {\bf Conflict of interest}
The authors declare that there is no conflict of interest.

\vspace{0.5cm}
\noindent {\bf Data Availability}
There is no data associated to this work.

\bibliographystyle{siam}
\bibliography{bKP}

\end{document}